\documentclass[pdflatex,sn-mathphys-num]{sn-jnl}


\usepackage{graphicx}%
\usepackage{multirow}%
\usepackage{amsmath,amssymb,amsfonts,dsfont}%
\usepackage{amsthm}%
\usepackage{mathrsfs}%
\usepackage{mathtools}
\usepackage[title]{appendix}%
\usepackage{xcolor}%
\usepackage{textcomp}%
\usepackage{manyfoot}%
\usepackage{booktabs}%
\usepackage{algorithm}%
\usepackage{algorithmicx}%
\usepackage{algpseudocode}%
\usepackage{listings}%
\usepackage{subcaption} %
\usepackage{enumitem}
\usepackage{hyperref}
\usepackage{cleveref} 


\theoremstyle{thmstyleone}%
\newtheorem{Theorem}{Theorem}
\newtheorem{Proposition}[Theorem]{Proposition}
\newtheorem{Corollary}[Theorem]{Corollary}
\newtheorem{Lemma}[Theorem]{Lemma}
\newtheorem{Definition}[Theorem]{Definition}
\newtheorem{Remark}[Theorem]{Remark}
\newtheorem{Example}[Theorem]{Example}

\newtheorem{Algorithm}[Theorem]{Algorithm}

\usepackage{tikz}
\usetikzlibrary{shapes,fit}
\usetikzlibrary{shapes.multipart}
\usetikzlibrary{positioning}
\usetikzlibrary{shapes, arrows, backgrounds, fit, positioning}

\DeclareMathOperator*{\grad}{grad}

\DeclareMathOperator*{\amin}{argmin}
\DeclareMathOperator*{\dom}{dom}
\DeclareMathOperator*{\intr}{int}
\DeclareMathOperator*{\prox}{Prox}
\DeclareMathOperator*{\fix}{Fix}

\newcommand{\tr}[1]{\textcolor{black}{#1}}

\newcommand{\R}{\mathbb{R}}
\newcommand{\M}{\mathcal{M}}
\newcommand{\N}{\mathbb{N}}
\newcommand{\T}{\mathcal{T}}
\newcommand{\hide}[1]{}

\raggedbottom

\begin{document}

\title[Efficient Douglas-Rachford Methods on Hadamard Manifolds with Applications to the Heron Problems]{Efficient Douglas-Rachford Methods on Hadamard Manifolds with Applications to the Heron Problems}


\author[1]{\fnm{D. R.} \sur{Sahu}}\email{drsahudr@gmail.com}

\author*[1,2]{\fnm{Shikher} \sur{Sharma}}\email{shikhers043@gmail.com}
\equalcont{These authors contributed equally to this work.}

\author[3]{\fnm{Pankaj} \sur{Gautam}}\email{pgautam908@gmail.com}
\equalcont{These authors contributed equally to this work.}

\affil*[1]{\orgdiv{Department of Mathematics}, \orgname{Banaras Hindu University}, \orgaddress{\city{Varanasi}, \country{India}}}

\affil[2]{\orgdiv{Department of Mathematics}, \orgname{The Technion -- Israel Institute of Technology}, \orgaddress{ \city{Haifa}, \postcode{32000}, \country{Israel}}}

\affil[3]{\orgdiv{Department of Applied Mathematics and Scientific Computing}, \orgname{IIT Roorkee}, \orgaddress{\city{Saharanpur}, \country{India}}}


\abstract{		Our interest lies in developing some efficient methods for minimizing the sum of two geodesically convex functions on Hadamard manifolds, with the aim \tr{of improving the convergence} of the Douglas-Rachford algorithm in Hadamard manifolds. Specifically, we propose two types of algorithms: inertial and non-inertial algorithms. The convergence analysis of both algorithms is provided under suitable assumptions on algorithmic parameters and the geodesic convexity of the objective functions. This convergence analysis is based on fixed-point theory for nonexpansive operators. We also study the convergence rates of these two methods.

	Additionally, we introduce parallel Douglas-Rachford type algorithms for minimizing functionals containing multiple summands with applications to the generalized Heron problem on Hadamard manifolds. To demonstrate the effectiveness of the proposed algorithms, we present some numerical experiments for the generalized Heron problems.}

\keywords{Inertial methods, Douglas-Rachford methods, Fixed point methods, Minimization problems, Generalized Heron problems}



\maketitle

\section{Introduction}

The Douglas-Rachford algorithm is among the most celebrated splitting algorithms in convex analysis and operator theory \cite{Combettes2008Inv,Bot2015,Dixit2022,Bergmann2016,ShikherOpt2024,Hieu2018COA, Barshad2023}, originally introduced by Douglas and Rachford \cite{Douglas1956} for the numerical solution of partial differential equations. It was later generalized by Lions and Mercier \cite{Lions1967} and independently by Passty \cite{Passty1979} to solve the more general problem of finding a zero of the sum of two maximally monotone operators in Hilbert spaces. Eckstein and Bertsekas \cite{Eckstein1992} provided further insights into its convergence under relaxation schemes and inexact computations.

Beyond operator inclusions, the algorithm also finds wide application in convex optimization, particularly for problems involving the sum of two proper, lower semicontinuous, and convex functions; see \cite{Combettes2008Inv}.
Given a Hilbert space $X$ and two proper, convex, lower semicontinuous functions $\phi, \psi: X \to (-\infty, +\infty]$, the Douglas--Rachford algorithm aims to solve the following convex minimization problem:
\begin{equation}\label{HilbertMin}
	\min_{x \in X} \phi(x) + \psi(x).
\end{equation}
The strength of the Douglas-Rachford algorithm lies in its splitting structure, which allows one to compute the proximal mappings of $\phi$ and $\psi$ separately. This structure makes the algorithm especially useful in large-scale and structured optimization problems \cite{Combettes2008Inv}.

Recently, these algorithms were successfully applied in image processing mainly for two reasons: the functional to minimize allows simple proximal mappings within the method and it turned out that the algorithms are highly parallelizable; see \cite{Combettes2008Inv}. Therefore, Douglas-Rachford methods became some of the most popular variational methods for image processing. Continued interest in Douglas-Rachford iteration methods is also due to its excellent performance on various nonconvex problems; see \cite{Bauschke2002, Borwein2011}.

Motivated by applications where data naturally reside on nonlinear spaces such as manifolds, common in computer vision, machine learning, and information geometry, there has been growing interest in extending convex optimization methods to manifold settings. Hadamard manifolds, in particular, offer a natural framework for such generalizations due to their geometric properties, including unique geodesics between points \cite{BacakNAA2014}.
In the past two decades, several results from linear spaces have been extended to manifold settings because of their applications in many areas of science, engineering, and management (see \cite{Martin2, Martin3, Ferreira2002, Fereira2005}).

In this context, a natural generalization of classical convex optimization problems to Hadamard manifolds involves minimizing the sum of two geodesically convex functions.
Let $\M$ be a Hadamard manifold and let $\phi,\psi\colon \M\to (-\infty,+\infty]$ be proper, geodesically convex and  lower semicontinuous functions. Consider the following minimization  problem:
\begin{equation}\label{Pbm1}
	\min_{x \in \M} \phi(x) + \psi(x).
\end{equation}
To compute the solution to problem \eqref{Pbm1}, Bergmann et al. \cite{Bergmann2016} proposed the following Douglas-Rachford algorithm on Hadamard manifold: 
\begin{equation}\label{Bergmannalg}
	x_{n+1}=\gamma(x_n,R_{\lambda \phi}R_{\lambda \psi}(x_n); \alpha_n),
\end{equation}
\tr{where $\lambda>0$, $\{\alpha_n\}$ is a sequence in $[0,1]$ with $\sum_{n=1}^\infty \alpha_n (1-\alpha_n)=\infty$ and  \(\gamma(x,y;\cdot)\) is the unique geodesic joining $x$ to $y$ in $\M$. The operators \(R_{\lambda\phi}\) and \(R_{\lambda\psi}\) denote the reflection of $\phi$ and $\psi$, respectively. Geometrically, \(R_{\lambda\phi}(x)\) is obtained by reflecting the point \(x\) with respect to its proximal point \(\prox_{\lambda\phi}(x)\) along the unique geodesic joining them.}
Bergmann et al. \cite{Bergmann2016} showed that if $R_{\lambda \phi}R_{\lambda \psi}$ is nonexpansive, then the sequence generated by method  \eqref{Bergmannalg} converges to an element $v$ of $\fix(R_{\lambda \phi}R_{\lambda \psi})$ such that $u=\prox_{\lambda \psi}(v)$ is solution of problem \eqref{Pbm1}.

It is worth noting that by setting $T={R}_{\lambda \phi} {R}_{\lambda \psi}$, the  Douglas-Rachford algorithm \eqref{Bergmannalg} can be seen as a Riemannian Krasnosel'ski\u{\i}--Mann iteration of the form
\[
x_{n+1} = \gamma(x_n, T x_n; \alpha_n),
\]
\tr{which is the natural extension of the classical Krasnosel'ski\u{\i}--Mann scheme from Hilbert spaces to Hadamard manifolds; see \cite{Bergmann2016}.}  This observation connects optimization and fixed point theory and hence problem \eqref{Pbm1} can now be viewed as a fixed point problem.

This naturally leads to the framework of fixed point theory.
Various problems across diverse domains, such as image reconstruction \cite{Byrne2004}, signal processing \cite{Byrne2004}, variational inequality problems \cite{Mercier1980}, and convex feasibility problems \cite{Bauschke1996}  can be formulated as fixed point problems. Let $C$ be a nonempty, closed and convex subset of a real Hilbert space $X$ and $T: C \to C$ be a nonlinear operator. Then, the fixed point problem can be stated as:
\begin{equation}\label{FPP}
	\text{ find } x\in C \text{ such that } Tx=x.
\end{equation}

To solve such problems, numerous iterative techniques have been developed. One of the most widely used is the Mann iterative method, introduced by Mann \cite{Mann1953} in 1953. 
The Mann iteration method has its limitations, as it may not effectively approximate the fixed points of pseudocontractive mappings (see \cite{Chidume2001}).
To tackle this challenge, Ishikawa \cite{Ishikawa1974} proposed an iterative approach, extensively investigated by multiple authors across different spaces to compute the fixed point of pseudocontractive and nonexpansive mappings (see  \cite{Takahashi2008,Acedo2007,Dotson1970}).

Building on this, Agarwal et al. \cite{Agarwal2007} introduced the S-iteration method  as follows:
\begin{equation}\label{Sitermethod}
	\begin{cases}
		y_n=(1-\beta_n) x_n+\beta_n Tx_n,\\
		x_{n+1}=(1-\alpha_n)Tx_n+\alpha_n Ty_n  \text{ for all }  n \in \N,
	\end{cases}
\end{equation}
where $\{\alpha_n\}$ and $\{\beta_n\}$ are sequences in $(0,1)$ with $\sum_{n=1}^{\infty}\alpha_n\beta_n(1-\beta_n)=\infty$. 
In 2011, Sahu \cite{Sahu2011} proposed a modified version of the S-iteration method \eqref{Sitermethod}, called the normal S-iteration method, also known as the hybrid Picard-Mann iteration method, which is defined as follows:
\begin{equation}\label{NSmethod}
	x_{n+1}=T((1-\alpha_n)x_n+\alpha_n Tx_n)  \text{ for all }  n \in \N,
\end{equation}
where $\{\alpha_n\}$ is a sequence in $(0,1)$.
Continuing in this direction, Sahu \cite{Sahusoft2020} observed that applying the operator multiple times in each iteration can significantly enhance the convergence.
This led to the $p$-accelerated normal S-iteration method \cite{Sahusoft2020}, defined as follows:
\begin{equation}\label{paccmethod}
	x_{n+1}=T^p((1-\alpha_n)x_n+\alpha_n Tx_n)  \text{ for all }  n \in \N,
\end{equation}
where $\{\alpha_n\}$ is a sequence  in $(0,1)$ and $p$ is the power of acceleration. For $p=1$, the  $p$-accelerated normal S-iteration method reduces to the normal S-iteration method defined by \eqref{NSmethod}. It is observed in \cite{Sahusoft2020} that the $p$-accelerated normal S-iteration method has better convergence performance than the Mann iterative method and normal S-iterative method for the fixed point of nonexpansive operators with convex domain in infinite dimensional Hilbert space. In fact in most cases $p$-accelerated normal S-iteration method works better than the inertial Mann iteration method due to the parameter $p$.

Inertial-type extrapolation techniques play a critical role in accelerating iterative methods.	
The foundation of such methods can be traced back to Polyak \cite{Polyak1964}, who introduced the heavy ball method.  Alvarez and Attouch \cite{Alvarez2001} extended the concept to improve the performance of the proximal point algorithm, proposing the inertial proximal point algorithm for solving inclusion problems.

In the existing literature, the utilization of inertial extrapolation-based methods has been thoroughly explored and implemented in various contexts 
(see \cite{Ochs2014,Bot2016,Chen2014,Samir2023, Attouch2019} and references therein). Recently, researchers have developed several iterative algorithms by incorporating inertial extrapolation techniques, such as  the fast shrinkage thresholding algorithm \cite{Beck2009(1)}, inertial forward-backward algorithm \cite{Lorenz2015} and inertial Douglas-Rachford splitting algorithm  \cite{Bot2015}.

Building upon the development of inertial and accelerated iterative methods, Mainge \cite{Mainge2008} proposed a novel approach in 2008 by combining inertial extrapolation with the Mann method, aiming to compute the  fixed point of nonexpansive mappings on a real Hilbert space. The inertial Mann algorithm is defined as follows:
\begin{equation}\label{Mainge}
	\begin{cases}
		y_n=x_n+\theta_n(x_n-x_{n-1}),\\
		x_{n+1}=(1-\alpha_n)y_n+\alpha_n Ty_n  \text{ for all }  n \in \N,
	\end{cases}
\end{equation}
where $\{\theta_n\}$ and $\{\alpha_n\}$ are sequences in $[0,1)$.
Mainge \cite{Mainge2008} studied the weak convergence of inertial Mann algorithm \eqref{Mainge} under the conditions that 
$\theta_n\in [0,\theta]$ for all $n\in \N$, where $\theta\in [0,1)$, 
$\sum_{n=1}^\infty \theta_n\|x_n-x_{n-1}\|^2<\infty$, and  $\inf_{n \in \N} \alpha_n >0$ and $\sup_{n\in \N} \alpha_n<1$.

The second condition that $\sum_{n=1}^\infty \theta_n \|x_n-x_{n-1}\|^2<\infty$, taken by Mainge \cite{Mainge2008} is often difficult to verify in practice. Bo\c{t} et al. \cite{Bot2015}  proposed a modification in algorithm \eqref{Mainge} 
by replacing this condition with an alternative criterion
that the sequence $\{\theta_n\}$ is nondecreasing in $[0,\theta]$ with $\theta_1=0$ and $\theta\in [0,1)$  such that 
\begin{align}\label{Botassump}
	&\delta >\frac{\alpha^2(1+\alpha)+\alpha \sigma}{1-\alpha^2} \text{ and }
	0<\lambda\leq \lambda_n \leq \frac{\delta-\alpha[\alpha(1+\alpha)+\alpha\delta+\sigma]}{\delta[1+\alpha(1+\alpha)+\alpha\delta+\sigma]}  \text{ for all }  n \in \N,
\end{align}
where $\alpha,\delta,\sigma>0$ are some suitable parameters.

Recently, Dixit et al. \cite{Dixit2020} proposed the inertial normal S-iteration method for the  solution of problem \eqref{FPP} for a nonexpansive operator, which in practice converges faster than the Mann iteration method, inertial Mann iteration method and normal S-iteration method. 

The verification of condition \eqref{Botassump}, as considered by \cite{Bot2015}, is a challenging task. To address this issue, Sahu \cite{Sahusoft2020} proposed a new convergence criterion for the inertial Mann iteration. This criterion assumes that the sequences \( \{\alpha_n\} \) and \( \{\theta_n\} \) satisfy the following conditions:  there exist constants $a,b>0$ such that \( 0 < a \leq \alpha_n \leq b < 1 \) for all \( n \in \N \), the sequence \( \{\theta_n\}\)  is nondecreasing in $[0,1)$, and there exists \( \theta \in (0,1) \) such that $0\leq \theta_n \leq \theta < \frac{\epsilon}{1+\epsilon+\max\{1,\epsilon\}}$ for all $n \in \N$, where $\epsilon=\frac{1-b}{b}$. These conditions are easier to verify in practice.

Motivated by the work of Sahu \cite{Sahusoft2020} and  Bergmann et al. \cite{Bergmann2016}, the main purpose of this paper is to introduce an inertial Douglas-Rachford method and $p$-accelerated normal S-Douglas-Rachford method for solving problem \eqref{Pbm1} and study its convergence analysis. Our proposed methods are novel and of interest in the following sense:
\begin{itemize}
	\item[(i)] 
\tr{Compared with the classical Douglas--Rachford method on Hadamard manifolds Bergmann et al.~\cite{Bergmann2016}, we propose two accelerated variants on Hadamard manifolds, namely an inertial Douglas-Rachford method and $p$-accelerated normal S-Douglas-Rachford method. To the best of our knowledge, such inertial and $p$-accelerated Douglas--Rachford schemes have not been previously analyzed in the Riemannian setting.}
	
	\item[(ii)]
	\tr{Unlike Sahu~\cite{Sahusoft2020} and other works on $p$-accelerated normal S-iteration in Hilbert spaces, we extend the $p$-acceleration idea to Douglas--Rachford splitting methods on Hadamard manifolds. Although some of the conditions are similar to the Hilbert space case, this extension is not straightforward because Hadamard manifolds do not have a linear structure. It requires careful use of geodesic convexity, exponential maps, and comparison geometry.}
	
	\item[(iii)]
\tr{Compared with the inertial Douglas--Rachford and inertial Mann schemes studied in Hilbert spaces by Bo\c{t} et al.~\cite{Bot2015} and Mainge~\cite{Mainge2008}, our analysis is carried out entirely in the nonlinear setting of Hadamard manifolds. Moreover, we use different parameter conditions, which are easier to verify in practice than those in Bo\c{t} et al.~\cite{Bot2015} and Mainge~\cite{Mainge2008}.}

\item[(iv)] \tr{
We analyze the convergence rates of our proposed algorithms. Specifically, the $p$-accelerated normal S-iteration method converges at a rate of order $o(1/\sqrt{n})$, while the inertial Mann iteration method converges in the minimal residual sense, also at a rate of order $o(1/\sqrt{n})$.}
\end{itemize}

We give \tr{an} application of these algorithms to minimizing functionals containing multiple summands by introducing a parallel inertial  Douglas-Rachford method and parallel $p$-accelerated Douglas-Rachford method.  We also apply our parallel Douglas-Rachford methods to solve the generalized Heron problems in Hadamard manifolds and provide numerical experiments.

	\section{Preliminaries}

Let $C$ be a nonempty subset of a metric space $(X,d)$ and let $T\colon  C \to C$ be an operator. Then $T$ is said to be nonexpansive if 
$d(Tx,Ty)\leq d(x,y)  \text{ for all }  x,y \in C.$
We denote the set of fixed points of an operator $T\colon C \to C$ by $\fix(T)$, thus, 
$\fix(T)=\{x\in C\colon Tx=x\}.$

For a sequence $\{x_n\}$ in $C$, define $R_{T,\{x_n\}}:\N \to [0,\infty)$ by 
\begin{equation*}
	R_{T,\{x_n\}}(n)=\min_{1\leq i\leq n}d(x_i,Tx_i) \text{ for all } n \in \N.
\end{equation*}
Observe that $R_{T,\{x_n\}}(n)\leq d(x_i,Tx_i)$ for all $i=1,2,\ldots,n$ and $\{R_{T,\{x_n\}}(n)\}$ is decreasing.
We use the notation \(o\) \tr{to mean that} \(s_n = o(1/t_n)\) if and only if 
\(
\lim_{n\to\infty} s_n t_n = 0.
\)
\begin{Lemma}[\cite{Dong2015comments}]\label{DongLemma}
	Let \(\{a_n\}\) and \(\{b_n\}\) be sequences of positive real numbers such that  
	\(\sum_{n=1}^{\infty} a_n b_n < \infty.\)
	Suppose that \(\sum_{n=1}^{\infty} a_n  = \infty\) and that the sequence \(\{b_n\}\) is decreasing. Then
	\(
	b_n = o\!\left(\frac{1}{\sum_{i=1}^{n} a_i}\right).
	\)
\end{Lemma}

\begin{Definition}[\cite{SahuNFAO}]
	Let $C$ be a nonempty subset of a metric space $(X,d)$ and let  $T\colon  C \to C$ be an operator with $\fix(T)\neq \emptyset$. Let $\{x_n\}$ be a sequence in $C$. We say that $\{x_n\}$ has
	\begin{enumerate}
		\item[(a)]   the limit existence property (for short, the LE property) for $T$ if $\lim_{n \to \infty}d(x_n,v)$ exists for all $v \in \fix(T)$;
		\item[(b)]    the approximate fixed point property (for short, the AF point property) for $T$ if $$\lim_{n \to \infty}d(x_n, Tx_n)=0;$$
		\item[(c)]  the LEAF point property if $\{x_n\}$ satisfies both LE property and AF property.
	\end{enumerate}
\end{Definition}

\begin{Lemma}[\cite{Sahusoft2020}] \label{Lemma1}
	Let $X$ be a Hilbert space and $\alpha\geq 0$. Then,
	$$\|x-\alpha y\|^2\geq (1-\alpha)\|x\|^2-\alpha(1-\alpha)\|y\|^2  \text{ for all }  x,y\in X.$$
\end{Lemma}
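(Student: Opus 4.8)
The plan is to reduce the claimed inequality to the trivial fact that $\alpha\|x-y\|^2\ge 0$. First I would expand the left-hand side by the bilinearity of the inner product, writing
\begin{equation*}
	\|x-\alpha y\|^2 = \|x\|^2 - 2\alpha\langle x,y\rangle + \alpha^2\|y\|^2 .
\end{equation*}
Then I would subtract the right-hand side $(1-\alpha)\|x\|^2 - \alpha(1-\alpha)\|y\|^2$ from this expression and collect terms: the coefficient of $\|x\|^2$ becomes $1-(1-\alpha)=\alpha$, and the coefficient of $\|y\|^2$ becomes $\alpha^2+\alpha(1-\alpha)=\alpha$, so the difference equals $\alpha\bigl(\|x\|^2 - 2\langle x,y\rangle + \|y\|^2\bigr) = \alpha\|x-y\|^2$.

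Since $\alpha\ge 0$ and $\|x-y\|^2\ge 0$, this difference is nonnegative, which is exactly the assertion. Equivalently, one may invoke the identity $\|(1-\alpha)a+\alpha b\|^2 = (1-\alpha)\|a\|^2 + \alpha\|b\|^2 - \alpha(1-\alpha)\|a-b\|^2$, valid for every real $\alpha$, with $a=x$ and $b=x-y$; since $(1-\alpha)x+\alpha(x-y)=x-\alpha y$ this gives $\|x-\alpha y\|^2 = (1-\alpha)\|x\|^2 + \alpha\|x-y\|^2 - \alpha(1-\alpha)\|y\|^2$, and dropping the nonnegative term $\alpha\|x-y\|^2$ yields the inequality.

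There is essentially no obstacle here: the statement is a one-line consequence of expanding the square, and the only point requiring any care is the sign bookkeeping when collecting the coefficients of $\|x\|^2$ and $\|y\|^2$, together with the observation that it is the hypothesis $\alpha\ge 0$ (not $\alpha\in[0,1]$) that is used. I would present the first, purely computational derivation as the proof, as it is self-contained and requires no auxiliary identity.
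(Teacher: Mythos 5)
Your proof is correct: the difference of the two sides is exactly $\alpha\|x-y\|^2$, and only $\alpha\ge 0$ is needed, as you note. The paper itself gives no proof of this lemma (it is quoted from the cited reference of Sahu), so there is nothing to compare against; your direct expansion is the standard and natural argument.
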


\begin{Lemma}[{\cite{Alvarez2004}}]  \label{LeSC3}
	Let $\{a_n\},\{b_n\}$ and $\{\theta_n\}$ be  sequences in $[0,\infty)$ such that 
	\[a_{n+1}\leq a_n+\theta_n(a_n-a_{n-1})+b_n  \text{ for all }  n \in \N,\]
	$\sum_{n=1}^\infty b_n <\infty$ and there exists a real number $\theta$ with $0\leq \theta_n \leq \theta<1$ for all $n\in \N$. Then, the following hold:
	\begin{enumerate}
		\item[(i)] $\sum_{n=1}^\infty [a_n-a_{n-1}]_+<\infty$, where $[t]_+=\max\{t,0\}$.
		\item[(ii)] There exists $a^*\in [0,\infty)$ such that $a_n\to a^*$.
	\end{enumerate}
\end{Lemma}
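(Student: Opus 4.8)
The plan is to reduce everything to a one-step contraction inequality for the positive parts of the increments, and then to exhibit an auxiliary monotone sequence whose limit governs the limit of $\{a_n\}$. Throughout I write $\delta_n := [a_n-a_{n-1}]_+ = \max\{a_n-a_{n-1},0\}$ for $n\in\N$ (so $a_0$ is understood to be part of the given data, as the recursion already uses it).

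\textbf{Step 1 (a contraction-type recursion).} Using the hypothesis together with $0\le\theta_n\le\theta$, I would estimate
$$ a_{n+1}-a_n \le \theta_n(a_n-a_{n-1})+b_n \le \theta_n[a_n-a_{n-1}]_+ + b_n \le \theta\,\delta_n + b_n, $$
where the middle inequality uses $\theta_n\ge 0$ to pass to the positive part, and the last uses $\theta_n\le\theta$ with $\delta_n,b_n\ge 0$. Since the right-hand side is nonnegative, taking positive parts on the left gives
$$ \delta_{n+1} \le \theta\,\delta_n + b_n \qquad\text{for all } n\in\N. $$

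\textbf{Step 2 (summability of the increments, part (i)).} Summing the displayed recursion for $n=1,\dots,N$ and reindexing the left-hand side, I get $\sum_{n=1}^{N}\delta_n - \delta_1 \le \sum_{n=2}^{N+1}\delta_n \le \theta\sum_{n=1}^{N}\delta_n + \sum_{n=1}^{N}b_n$, hence $(1-\theta)\sum_{n=1}^{N}\delta_n \le \delta_1 + \sum_{n=1}^{\infty}b_n$. Since $\theta<1$ and $\sum_{n=1}^\infty b_n<\infty$, letting $N\to\infty$ yields $\sum_{n=1}^{\infty}[a_n-a_{n-1}]_+ = \sum_{n=1}^{\infty}\delta_n < \infty$, which is (i).

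\textbf{Step 3 (convergence of $\{a_n\}$, part (ii)).} Set $v_n := a_n - \sum_{k=1}^{n}\delta_k$. Because $a_{n+1}-a_n \le [a_{n+1}-a_n]_+ = \delta_{n+1}$, the sequence $\{v_n\}$ is non-increasing; and since $a_n\ge 0$ while, by Step 2, $\sum_{k=1}^{n}\delta_k \le \sum_{k=1}^{\infty}\delta_k =: S < \infty$, it is bounded below by $-S$. Hence $\{v_n\}$ converges, and as $\{\sum_{k=1}^{n}\delta_k\}$ converges too, $a_n = v_n + \sum_{k=1}^{n}\delta_k$ converges to some $a^*$; finally $a^*\ge 0$ because every $a_n\ge 0$. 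This proves (ii).

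\textbf{Main obstacle.} There is no serious difficulty here; the only point requiring care is that the increments $a_n-a_{n-1}$ may change sign, so one cannot telescope the $a_n$'s directly. Introducing $\delta_n=[a_n-a_{n-1}]_+$ is precisely what converts the quasi-monotone inequality into a genuine one-step contraction with an $\ell^1$ perturbation, after which both parts follow immediately; I expect the final write-up to be short.
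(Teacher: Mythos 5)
Your proof is correct, and since the paper states this lemma without proof (citing Alvarez), the relevant comparison is with the cited source: your argument is essentially the standard one there, namely passing to $\delta_n=[a_n-a_{n-1}]_+$, deriving the contraction $\delta_{n+1}\le\theta\delta_n+b_n$ and summing it, then showing that $a_n-\sum_{k=1}^n\delta_k$ is nonincreasing and bounded below to obtain convergence of $\{a_n\}$. Your handling of the details (using $\theta_n\ge 0$ to pass to positive parts, and noting that $a_0$ belongs to the data) is sound, so nothing further is needed.
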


For the basic definitions and results regarding the geometry of Riemannian manifolds, we refer to \cite{Docarmo,Sakai,Udriste}. 

Let $\M$ be an $m$-dimensional differentiable manifold and let  $x \in \M$. The set of all tangent vectors at the point $x$, denoted by $\T_x\M$, is called the tangent space of $\M$ at $x$.  The tangent bundle of $\M$ is defined as the disjoint union $\T\M = \bigcup_{x \in \M} \T_x\M$.
A differentiable manifold $\M$ with a Riemannian metric $\langle \cdot , \cdot \rangle$ is said to be a Riemannian manifold.
The induced norm on $\T_x\M$ is denoted by $\|\cdot \|_x$.  The Riemannian distance between $x$ and $y$ in $\M$ is designated by $d(x,y)$.
A geodesic joining $x$ to $y$ in the Riemannian manifold $\M$ is said to be a minimal geodesic if its length is equal to $d(x,y)$. We denote geodesic joining $x$ and $y$ by $\gamma(x,y; \cdot)$, i.e.,
$\gamma(x,y; \cdot)\colon[a,b] \to \M$ is such that, for $a,b \in \R$, we have $\gamma(x,y;a)=x$ and $\gamma(x,y;b)=y$.

Let $\nabla $ be the Riemannian connection associated with the Riemannian manifold $\M$. The parallel transport on the tangent bundle $\T\M$ along the geodesic $\gamma(x,y;\cdot)$ with respect to Riemannian connection $\nabla$ is defined as $P_{\gamma(x,y;b),\gamma(x,y;a)} \colon \T_{\gamma(x,y;a)}\M \to \T_{\gamma(x,y;b)}\M$ such that 
$P_{\gamma(x,y;b),\gamma(x,y;a)}(v)=V(\gamma(x,y;b))  \text{ for all }  a,b \in \R,  v \in \T_x\M,$
where $V$ is the unique vector field satisfying $\nabla_{\gamma'(x,y;t)}V =0$ for all $t \in [a,b]$ and $V(\gamma(x,y;a))=v$.
A Riemannian manifold $\M$ is said to be complete if for any $x\in\M$ all geodesic emanating from $x$ are defined for all $t\in\R$. By Hopf-Rinow Theorem \cite{Sakai}, if $\M$ is complete, then any pair of points in $\M$ can be joined by a minimal geodesic. Moreover, $(\M,d)$ is a complete metric space. \tr{A complete simply connected Riemannian manifold of nonpositive sectional curvature is called a Hadamard manifold.  Since in a Hadamard manifold $\M$, any two points can be joined by a unique geodesic, we denote the parallel transport by  $P_{y,x}$ instead of $P_{\gamma(x,y;b),\gamma(x,y;a)}$.}
If $\M$ is a Hadamard manifold, then the exponential map $\exp_x \colon \T_x\M
\to \M$ at $x\in \M$ is defined by $	\exp_xv=\gamma_{x,v}(1)$ for all  $v\in \T_x\M$,
where $\gamma_{x,v} \colon  \R \to \M$ is a unique geodesic
starting from $x$ with velocity $v$, i.e., $\gamma_{x,v}(0)=x$ and $
\gamma'_{x,v}(0)=v$. It is known that $\exp_x(tv)=\gamma_{x,v}(t)$ for
each real number $t$. For the exponential map, the following properties hold:
\begin{enumerate}
	\item[(i)]  $\exp_x\mathbf{0}=\gamma_{x,\mathbf{0}}(1)=\gamma_{x,v}(0)=x$, where $\mathbf{0}$ is the zero tangent vector
	\item[(ii)] Inverse of $\exp_x$,  $\exp_x^{-1}\colon \M \to \T_x\M$ is given by 
	$\exp_x^{-1}y=v,$
	where $\gamma_{x,v}(1)=\exp_xv=y$.
	\item[(iii)] $\exp_x$ is a diffeomorphism on $\T_x\M$ and  for any $x,y \in
	\M$, we have $d(x,y)=\|\exp_x^{-1}y\|$. 
\end{enumerate} 

The following proposition establishes the relationship between geodesics and the exponential map.
\begin{Proposition}[\cite{Sakai}]
	\label{gtoe}  Let $\M$ be a
	Hadamard manifold and $x\in \M$. Then $\exp_x\colon  \T_x\M \to \M$ is a diffeomorphism and for any two points $x,y \in \M$, there exists a unique unit-speed geodesic joining $x$ to $y$, which is in fact a minimal geodesic. Let $\gamma(x,y; \cdot) \colon  [0,1] \to \M$ be the desired geodesic joining $x$ to $y$, i.e., $x=\gamma(x,y;0)$ and $y=\gamma(x,y;1)$. Then
	\begin{equation*}
		\gamma(x,y;t) = \exp_x(t\exp_x^{-1} y)  \text{ for all }  t \in [0,1].
	\end{equation*}
\end{Proposition}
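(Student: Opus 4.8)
This is the Cartan--Hadamard theorem together with the standard uniqueness and minimality facts for geodesics, so the plan is to assemble it from four steps: (1) $\exp_x$ is defined on all of $\T_x\M$; (2) $\exp_x$ is a local diffeomorphism; (3) $\exp_x$ is in fact a global diffeomorphism; (4) the displayed formula holds and the resulting geodesic is the unique minimal one. Step (1) is immediate from the hypothesis that $\M$ is complete: by Hopf--Rinow every geodesic $\gamma_{x,v}$ extends to all of $\R$, so $\exp_x v = \gamma_{x,v}(1)$ makes sense for every $v \in \T_x\M$.

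For step (2) I would compute the differential $d(\exp_x)_v \colon \T_v(\T_x\M) \cong \T_x\M \to \T_{\exp_x v}\M$ via Jacobi fields: if $w$ lies in its kernel, then the Jacobi field $J$ along the geodesic $t \mapsto \exp_x(tv)$ with $J(0)=\mathbf{0}$ and $\nabla_{\gamma'}J(0)=w$ satisfies $J(1)=\mathbf{0}$. The Jacobi equation $\nabla_{\gamma'}\nabla_{\gamma'}J + R(J,\gamma')\gamma' = \mathbf{0}$ together with nonpositive sectional curvature gives $\tfrac{d^2}{dt^2}\|J\|^2 = 2\|\nabla_{\gamma'}J\|^2 - 2\langle R(J,\gamma')\gamma', J\rangle \ge 0$, so $t \mapsto \|J(t)\|^2$ is convex on $[0,1]$; vanishing at both endpoints forces $J\equiv \mathbf{0}$ there, hence $w=\nabla_{\gamma'}J(0)=\mathbf{0}$. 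Thus $d(\exp_x)_v$ is injective, and by equality of dimensions it is a linear isomorphism, so $\exp_x$ is a local diffeomorphism at every point (equivalently, there are no conjugate points, which is precisely where nonpositive curvature is used).

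Step (3) is the part I expect to be the main obstacle, since it is where simple connectedness and completeness of $\M$ genuinely enter. I would pull back the metric, setting $\tilde g := \exp_x^{*} g$ on $\T_x\M$, so that $\exp_x \colon (\T_x\M,\tilde g) \to (\M,g)$ becomes a local isometry. The straight rays $t \mapsto tv$ through the origin are $\tilde g$-geodesics (their images are the radial geodesics $\gamma_{x,v}$ and $\exp_x$ is a local isometry) and they are defined for all $t \in \R$, so $(\T_x\M,\tilde g)$ is geodesically complete and hence, by Hopf--Rinow, a complete metric space. Invoking the standard fact that a local isometry out of a complete Riemannian manifold is a smooth covering map, $\exp_x$ is a covering; since $\M$ is simply connected and $\T_x\M$ is connected, this covering has a single sheet, i.e. $\exp_x$ is a diffeomorphism.

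Finally, step (4): with $\exp_x$ a diffeomorphism, $\exp_x^{-1}\colon \M \to \T_x\M$ is a well-defined smooth map, and for $y \in \M$ the curve $t \mapsto \exp_x(t\exp_x^{-1}y) = \gamma_{x,\exp_x^{-1}y}(t)$ is a geodesic with endpoints $x$ at $t=0$ and $y$ at $t=1$, which is exactly the $\gamma(x,y;\cdot)$ of the statement. Uniqueness follows because any geodesic from $x$ is determined by its initial velocity $w \in \T_x\M$, and $\exp_x w = y$ forces $w = \exp_x^{-1}y$ by injectivity of $\exp_x$. Minimality follows from the Gauss lemma: the radial geodesics from $x$ meet the geodesic spheres $\{\exp_x w : \|w\|=r\}$ orthogonally, so for any piecewise smooth curve $\sigma$ joining $x$ to $y$ one obtains $L(\sigma) \ge \|\exp_x^{-1}y\|$ by decomposing $\sigma'$ into radial and spherical components, with equality attained by $\gamma(x,y;\cdot)$; hence $d(x,y) = \|\exp_x^{-1}y\|$ and $\gamma(x,y;\cdot)$ is a minimal geodesic.
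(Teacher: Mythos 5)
Your argument is correct: it is the standard proof of the Cartan--Hadamard theorem (completeness plus the Jacobi-field/convexity argument ruling out conjugate points, the pulled-back-metric covering-map argument using simple connectedness, then uniqueness from injectivity of $\exp_x$ and minimality from the Gauss lemma). The paper itself does not prove Proposition~\ref{gtoe}; it is quoted as a known result from \cite{Sakai}, and your reconstruction coincides with the classical textbook proof given there, so there is nothing to flag beyond the minor remark that the statement's ``unit-speed'' geodesic is, up to reparametrization, the $[0,1]$-parametrized geodesic $t\mapsto\exp_x(t\exp_x^{-1}y)$ you construct.
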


\begin{Remark} [\cite{Martin2}] \label{re101} Let $\M$ be a Hadamard manifold.
	If $x,y \in \M$ and $v \in \T_y\M$, then 
	\begin{equation*}
		\langle v, -\exp_y^{-1}x \rangle = \langle v, P_{x,y}\exp_x^{-1}y\rangle =\langle P_{x,y} v, \exp_x^{-1}y \rangle.
	\end{equation*}
\end{Remark}

Hadamard manifolds have the nice feature that they resemble convexity properties from Euclidean spaces. A subset $C$ of a Hadamard manifold $\M$ is said to be geodesically convex if, for any two points $x$ and $y$ in $C$, the geodesic  joining $x$ to $y$ is contained in $C$. A function $f\colon \M \to \R$ is said to be  geodesically convex if, for any geodesic $\gamma(x,y; \cdot) \colon [a,b] \to \M$, the composition function $ f\circ \gamma(x,y;\cdot) \colon [a,b] \to \R$ is convex, i.e.,
		\begin{equation*}
				f\circ \gamma(x,y;(1-t)a+tb) \leq (1-t)(f\circ \gamma(x,y;a))+ t(f \circ
				\gamma(x,y;b)),
			\end{equation*}
		for any  $a,b \in \R$, $x,y \in \M$ and $t\in [0,1]$.

The following proposition highlights the convexity of the distance function.
\begin{Proposition}[\cite{Sakai}]  \label{disconv2}
	Let $\M$ be a Hadamard manifold and  $d\colon \M\times \M \to \R $ be the metric function. Then $d$ is a geodesically convex function with respect to product Riemannian metric, i.e., given any pair of geodesics $ \gamma(x_1,y_1;\cdot)\colon [0,1] \to \M$ and $\gamma(x_2,y_2;\cdot)\colon [0,1] \to \M$, for some $x_1,y_1,x_2,y_2\in \M$, the following hold:
	\begin{enumerate}
		\item[(i)] $d(\gamma(x_1,y_1;t), \gamma(x_2,y_2;t)) 
		\leq (1-t)d(x_1,x_2)+td(y_1,y_2)  \text{ for all }  t \in [0,\,1].$
		\item[(ii)] $d(\gamma(x_1,y_1;t), z) \leq (1-t)d(x_1,z)+ td(y_1,z)  \text{ for all }   t\in [0,1] \text{ and } z\in \M.$
	\end{enumerate}
\end{Proposition}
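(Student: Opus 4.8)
The plan is to deduce both inequalities from the comparison theorem for Riemannian manifolds of nonpositive sectional curvature (see \cite{Sakai}): a geodesic triangle in a Hadamard manifold is no thicker than its Euclidean comparison triangle, i.e., if a geodesic triangle has vertices $u,v,w$ and $\bar u,\bar v,\bar w\in\mathbb{E}^2$ are the vertices of a Euclidean triangle with the same side lengths, then $d(p,q)\le|\bar p\,\bar q|$ for any two points $p,q$ lying on the sides of the geodesic triangle, where $\bar p,\bar q$ are the corresponding points on the comparison triangle at the same arc-length fractions. (By Proposition~\ref{gtoe} geodesics between points of $\M$ are unique, minimal, and, being the images of $t\mapsto\exp_x(t\exp_x^{-1}y)$, have constant speed, so these triangles and the points on their sides are well defined.) The crucial preliminary step is the special case of~(i) in which the two geodesics share an endpoint: if $\gamma(p,a;\cdot)$ and $\gamma(p,b;\cdot)$ both issue from a common point $p$, then $d(\gamma(p,a;t),\gamma(p,b;t))\le t\,d(a,b)$ for all $t\in[0,1]$. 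This follows immediately from the comparison theorem applied to the geodesic triangle with vertices $p,a,b$: since geodesics are parametrised proportionally to arc length, the points on the comparison triangle corresponding to $\gamma(p,a;t)$ and $\gamma(p,b;t)$ are the affine combinations $(1-t)\bar p+t\bar a$ and $(1-t)\bar p+t\bar b$, whose Euclidean distance equals $t\,|\bar a\,\bar b|=t\,d(a,b)$.

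Granting this, I would prove~(i) by one use of the triangle inequality together with the common-endpoint estimate applied twice. Fix $t\in[0,1]$ and set $q:=\gamma(x_1,y_2;t)$. On one hand, $\gamma(x_1,y_1;t)$ and $q$ are the points at parameter $t$ on the two minimal geodesics emanating from $x_1$ (towards $y_1$ and towards $y_2$), so $d(\gamma(x_1,y_1;t),q)\le t\,d(y_1,y_2)$. On the other hand, reading those geodesics backwards, $q$ and $\gamma(x_2,y_2;t)$ are the points at parameter $1-t$ on the two minimal geodesics emanating from $y_2$ (towards $x_1$ and towards $x_2$), so $d(q,\gamma(x_2,y_2;t))\le(1-t)\,d(x_1,x_2)$. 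Adding these and using $d(\gamma(x_1,y_1;t),\gamma(x_2,y_2;t))\le d(\gamma(x_1,y_1;t),q)+d(q,\gamma(x_2,y_2;t))$ gives~(i).

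Finally,~(ii) is the special case $x_2=y_2=z$ of~(i): since $\exp_z^{-1}z=\mathbf{0}$, the geodesic $\gamma(z,z;\cdot)$ is the constant curve equal to $z$, and~(i) becomes $d(\gamma(x_1,y_1;t),z)\le(1-t)d(x_1,z)+t\,d(y_1,z)$. The substantive input here is the comparison theorem itself, which is classical for nonpositively curved manifolds; granting it, the common-endpoint estimate is the single place where nonpositive curvature is genuinely used, and the remaining manipulations with the triangle inequality are routine, so I anticipate no real obstacle. A fully coordinate-free alternative for~(i) would be to observe that $\M\times\M$ is again a Hadamard manifold, that $t\mapsto(\gamma(x_1,y_1;t),\gamma(x_2,y_2;t))$ is a geodesic in it, that the diagonal $\Delta=\{(x,x):x\in\M\}$ is totally geodesic, hence closed and geodesically convex, and that $d(a,b)=\sqrt{2}\,\mathrm{dist}_{\M\times\M}\big((a,b),\Delta\big)$; since the distance to a closed geodesically convex subset is a geodesically convex function on a Hadamard manifold, (i) follows, and (ii) follows as above. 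I would nonetheless present the comparison-triangle argument, since it is the most elementary and self-contained.
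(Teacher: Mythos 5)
Your argument is correct; note that the paper does not prove this proposition at all but imports it from Sakai's book, so there is no internal proof to compare against. Your comparison-triangle route is the standard one and is fully consistent with the toolkit the paper does record: the common-endpoint estimate $d(\gamma(p,a;t),\gamma(p,b;t))\le t\,d(a,b)$, the reversal $\gamma(x_1,y_2;t)=\gamma(y_2,x_1;1-t)$ (legitimate by uniqueness of geodesics, Proposition~\ref{gtoe}), and one triangle inequality give (i), and (ii) is indeed the degenerate case $x_2=y_2=z$. Two small remarks. First, the comparison inequality you invoke is the two-points-on-two-sides (CAT(0)) form, which is slightly stronger than the vertex-to-side form the paper states in Lemma~\ref{L3}(b); the two-point form is classical for Hadamard manifolds and citable from Sakai, but if you wanted to stay strictly within Lemma~\ref{L3} you would either use the angle comparison in part (a) together with the Euclidean law of cosines, or derive the two-point form from the vertex form by the usual subdivision argument. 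Second, your alternative product-manifold argument should be treated with care: the convexity of the distance to the closed convex set $\Delta$ is itself usually deduced from precisely the joint convexity of $d$ being proved here, so as stated it risks circularity unless you supply an independent proof (e.g., via nonexpansiveness of the metric projection); since you present it only as a remark and keep the comparison-triangle proof as the main argument, this does not affect correctness.
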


\begin{Proposition} [\cite{ShikherCNSNS2024}] \label{Newpr1}
	Let $\M$ be a Hadamard manifold and $x,y,z \in \M$. Let $u\in \M$ be a point on the geodesic joining $x,y$, i.e., $u=\gamma(x,y;t)$, for some $t\in [0,1]$, then 
	\begin{equation*}
		d^2(u,z) \leq (1-t)d^2(x,z)+td^2(y,z)-t(1-t)d^2(x,y).
	\end{equation*}
\end{Proposition}

\begin{Proposition}\label{Newpr2}
	Let $\M$ be a Hadamard manifold and $x,y \in \M$. Then, for $z\in \M$ and $\theta \in [0,1]$, the following inequality holds:
	\begin{equation*}
		d^2(\exp_x(-\theta \exp_x^{-1}y),z) \leq (1+\theta)d^2(x,z)-\theta d^2(y,z)+\theta(1+\theta) d^2(x,y).
	\end{equation*}
\end{Proposition}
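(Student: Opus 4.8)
The assertion is the Hadamard-manifold analogue of the elementary Hilbert-space identity $\|x+\theta(x-y)-z\|^2=(1+\theta)\|x-z\|^2-\theta\|y-z\|^2+\theta(1+\theta)\|x-y\|^2$ for the extrapolated point $x+\theta(x-y)$, and the plan is to reduce it to the strong-convexity estimate of Proposition \ref{Newpr1}. Set $w:=\exp_x(-\theta\exp_x^{-1}y)$. First I would record the elementary geometric facts that $\exp_x^{-1}w=-\theta\exp_x^{-1}y$, that $w$ lies on the geodesic line through $x$ and $y$ on the side of $x$ opposite to $y$, and hence that $d(w,x)=\theta\,d(x,y)$ and $d(w,y)=(1+\theta)\,d(x,y)$; in particular $x$ is the interior point $\gamma\big(w,y;\tfrac{\theta}{1+\theta}\big)$ of the minimal geodesic joining $w$ to $y$.

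Next I would apply Proposition \ref{Newpr1} to the triple $(w,y,z)$ at parameter $t=\tfrac{\theta}{1+\theta}\in[0,\tfrac{1}{2}]$; inserting $d^2(w,y)=(1+\theta)^2d^2(x,y)$ and solving the resulting relation for $d^2(w,z)$, a short computation reproduces precisely the expression on the right-hand side. An equivalent, more computational route is to expand $d^2(w,z)$ by the cosine rule in the geodesic triangle $\triangle(w,x,z)$ at the vertex $x$, use $\exp_x^{-1}w=-\theta\exp_x^{-1}y$ together with $d^2(w,x)=\theta^2d^2(x,y)$, and then eliminate the cross term $\langle\exp_x^{-1}y,\exp_x^{-1}z\rangle$ via Remark \ref{Properties}(ii) applied to $\triangle(y,x,z)$; this again collapses to the stated right-hand side.

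I expect the main obstacle to be the direction of the comparison inequality rather than the algebra. Proposition \ref{Newpr1}, Remark \ref{Properties}(ii) and Lemma \ref{L3}(b) exploit only the upper curvature bound of a Hadamard manifold, and for a target point $w$ lying outside the segment $[x,y]$ rather than inside it, naively solving these estimates for $d^2(w,z)$ places the correction term $\theta(1+\theta)d^2(x,y)$ on the opposite side from where it is wanted. The decisive step is therefore to produce a comparison estimate that genuinely furnishes an upper bound on $d^2(w,z)$ for this collinear, extrapolated configuration — for instance by comparing $\triangle(w,y,z)$ with its Euclidean model triangle and invoking Lemma \ref{L3}(b) for the point $x\in[w,y]$, or by combining the parallel-transport bound of Lemma \ref{PTle} with Remark \ref{Properties} — after which one must only check that the constraint $\theta\in[0,1]$ keeps every correction term on the correct side. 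Everything else is bookkeeping patterned on the Hilbert-space identity above.
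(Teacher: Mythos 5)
Your plan never actually establishes the inequality, and the route you describe as a ``short computation'' proves its reverse. With $w=\exp_x(-\theta\exp_x^{-1}y)$ one indeed has $d(w,x)=\theta\,d(x,y)$, $d(w,y)=(1+\theta)\,d(x,y)$ and $x=\gamma\bigl(w,y;\tfrac{\theta}{1+\theta}\bigr)$, but applying Proposition \ref{Newpr1} to the triple $(w,y,z)$ at $t=\tfrac{\theta}{1+\theta}$ gives
\begin{equation*}
	d^2(x,z)\;\le\;\tfrac{1}{1+\theta}\,d^2(w,z)+\tfrac{\theta}{1+\theta}\,d^2(y,z)-\theta\,d^2(x,y),
\end{equation*}
which, solved for $d^2(w,z)$, reads $d^2(w,z)\ge(1+\theta)d^2(x,z)-\theta d^2(y,z)+\theta(1+\theta)d^2(x,y)$: a lower bound, not the asserted upper bound. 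Your second route suffers from the same defect: Remark \ref{Properties}(ii), applied at the vertex $x$ of $\triangle(w,x,z)$, also bounds $d^2(w,z)$ from below, and there is no reverse cosine inequality at that vertex in a Hadamard manifold. You do flag this direction problem in your closing paragraph, but the ``decisive step'' you defer -- a genuine upper bound on $d^2(w,z)$ for the extrapolated configuration -- is the entire content of the proposition, and neither of the fixes you mention (Lemma \ref{L3}(b) for the point $x\in[w,y]$, or Lemma \ref{PTle} combined with Remark \ref{re101}) is carried out. So the proposal has a genuine gap at exactly the crucial point.

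For comparison, the paper's appendix proof is essentially the second route you only gesture at: it expands $d^2(w,z)$ in a comparison triangle for $\triangle(w,x,z)$, uses the angle comparison of Lemma \ref{L3} at the vertex $z$, then twice changes the base point of an inner product via Remark \ref{re101}, absorbing the discrepancy through the parallel-transport estimate of Lemma \ref{PTle}, and concludes with Remark \ref{Properties}(ii); the upper bound is generated precisely by those transport estimates, i.e.\ by the ingredient you leave untouched. Be aware, however, that the (correct) lower bound displayed above is strict in genuinely curved manifolds, so an upper bound by the same quantity would force equality there; in the hyperbolic plane with $\theta=1$, $d(x,y)=d(x,z)=1$ and a right angle at $x$, one finds $d^2(w,z)=d^2(y,z)=\operatorname{arccosh}^2(\cosh^2 1)\approx 2.29$, while $(1+\theta)d^2(x,z)-\theta d^2(y,z)+\theta(1+\theta)d^2(x,y)\approx 1.71$. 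Hence the direction problem you identified is a real obstruction rather than bookkeeping: it cannot be removed by any argument resting only on the convexity estimates of Proposition \ref{Newpr1} and Remark \ref{Properties}, and it indicates that the appendix step invoking Lemma \ref{PTle} (and that lemma's applicability in this configuration, where the quantity $\|\exp_z^{-1}w-P_{z,x}\exp_x^{-1}w\|$ can exceed $d(x,z)$) must be scrutinized before the proposition is relied upon.
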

\noindent
(The proof is given in Appendix~\ref{Ap1}.)

Let $\M$ be a Hadamard manifold and let $N\in \N$. Then, the product space of manifold $\M$ is defined by 
$\M^N=\{x=(x_1,\ldots,x_N)\colon x_i\in \M \text{ for } i=1,2,\ldots,N\}.$
\tr{Note that product space $\M^N$ is again a Hadamard manifold. The tangent space of $\M^N$ at $x=(x_1,\ldots,x_N)$ is $\T_x\M^N=\T_{x_1}\M\times \cdots \times \T_{x_N}\M$.
The distance between two points $x=(x_1,\ldots,x_N)$ and $y=(y_1,\ldots,y_N)$ in $\M^N$ is defined by
\[
\rho(x,y)=\left(\sum_{i=1}^N d^2(x_i,y_i)\right)^{1/2}.
\]
Moreover, the  geodesic in $\M^N$ joining $x=(x_1,\ldots,x_N)$ to $y=(y_1,\ldots,y_N)$ is given componentwise by
\(\gamma(x,y;t)=\bigl(\gamma(x_i,y_i;t),\ldots,\gamma(x_N,y_N;t)\bigr) \text{ for all } t\in[0,1]\).}

We now recall an important property related to the convergence of sequences satisfying the LEAF point (limit existence and approximate fixed point)  property.
\begin{Proposition}[\cite{ShikherCNSNS2024}] \label{LEAFpr1}
	Let $C$ be a nonempty and closed subset of a Hadamard manifold  $\M$ and let $T\colon C \to C$ be a nonexpansive operator such that $\fix(T)\neq \emptyset$. Let $\{x_n\}$ be a sequence in $C$ satisfying the LEAF  point property for $T$.
	Then, $\{x_n\}$ converges to a fixed point of $T$.
\end{Proposition}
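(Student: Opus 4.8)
The plan is to separate the two halves of the LEAF property and then recombine them through a subsequence argument, in the spirit of the classical Opial/demiclosedness proof of convergence for Krasnoselski--Mann iterations. First I would fix an arbitrary $v_0\in\fix(T)$ (possible since $\fix(T)\neq\emptyset$) and invoke the LE property to get that $\lim_{n\to\infty}d(x_n,v_0)$ exists; a convergent real sequence is bounded, so $\{x_n\}$ is a bounded sequence contained in the closed set $C$. Because $\M$ is a finite-dimensional complete Riemannian manifold, the Hopf--Rinow theorem implies that closed bounded subsets of $\M$ are compact, so some subsequence $\{x_{n_k}\}$ converges to a point $p\in\M$; since $C$ is closed and each $x_{n_k}\in C$, in fact $p\in C$, so $Tp$ is defined.

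The next step is to identify $p$ as a fixed point of $T$. Since $T$ is nonexpansive it is $1$-Lipschitz, hence continuous, so $x_{n_k}\to p$ forces $Tx_{n_k}\to Tp$. Using the triangle inequality together with the AF property $d(x_{n_k},Tx_{n_k})\to 0$,
\[
0\le d(p,Tp)\le d(p,x_{n_k})+d(x_{n_k},Tx_{n_k})+d(Tx_{n_k},Tp)\longrightarrow 0\qquad(k\to\infty),
\]
so $d(p,Tp)=0$, that is, $p\in\fix(T)$.

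Finally I would promote subsequential convergence to convergence of the whole sequence by applying the LE property a second time, now with the particular fixed point $v=p$: the limit $\ell:=\lim_{n\to\infty}d(x_n,p)$ exists, and evaluating it along $\{x_{n_k}\}$ gives $\ell=\lim_{k\to\infty}d(x_{n_k},p)=0$. Hence $d(x_n,p)\to 0$, i.e.\ $x_n\to p\in\fix(T)$, which is the desired conclusion.

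The argument is essentially a routine three-step chase, and I do not expect a serious obstacle. The only point requiring care is the extraction of a convergent subsequence: this relies on local compactness of the finite-dimensional Hadamard manifold (equivalently, on the compactness of closed bounded sets furnished by Hopf--Rinow). In an infinite-dimensional or general CAT$(0)$ setting one would instead need $\Delta$-convergence of bounded sequences together with a demiclosedness principle for nonexpansive maps, but in the present finite-dimensional framework no such machinery is needed.
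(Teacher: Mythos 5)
Your proof is correct, and it is the standard argument for this result (which the paper only cites from the reference rather than proving): boundedness from the LE property, a convergent subsequence via Hopf--Rinow in the finite-dimensional Hadamard setting, identification of the limit as a fixed point from the AF property and continuity of the nonexpansive map, and finally the LE property at that fixed point to upgrade subsequential to full convergence. The one point you rightly flag — reliance on local compactness — is unproblematic here since the paper's manifolds are $m$-dimensional, so no $\Delta$-convergence machinery is needed.
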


	Let  $f\colon \M \to (-\infty,\infty]$ be a proper, geodesically convex and lower semicontinuous function. For $\lambda>0$, the proximal operator of $f$ \tr{is well defined  and given by}
	$$\operatorname{Prox}_{\!\lambda f}(x)=\amin_{y\in \M}\left(f(y)+\frac{1}{2\lambda} d^2(x,y)\right)  \text{ for all }  x\in \M.$$
	The proximal operator of a  proper, geodesically convex and lower semicontinuous function is firmly nonexpansive operator (see \cite{Jost1995, Martin3}), i.e., for any $x,y \in \M$ and $t \in [0,1]$
	\begin{equation}\label{FNEeq}
		d(\operatorname{Prox}_{\!\lambda f}(x),\operatorname{Prox}_{\!\lambda f}(y)) \leq d(\gamma(x,\operatorname{Prox}_{\!\lambda f}(x);t),\gamma(y,\operatorname{Prox}_{\!\lambda f}(y);t)).
	\end{equation}

	\tr{The reflection of a proper, geodesically convex and lower semicontinuous function $f\colon \M \to (-\infty,\infty]$ is given by $R_{\lambda f}:\M \to \M$ with 
	$$R_{\lambda f}(x)=\exp_{\prox_{\lambda f}(x)}(-\exp_{\prox_{\lambda f}(x)}^{-1}(x))  \text{ for all }  x\in \M.$$
	That is, $R_{\lambda f}(x)$ is the reflection of $x$ with respect to $\prox_{\lambda f}(x)$ along the unique geodesic joining $x$ to $\prox_{\lambda f}(x)$.}

\begin{Lemma} [\cite{Bergmann2016}] \label{Lem1}
	Let $\M$ be an Hadamard manifold, $\lambda > 0$ and $c \in \M$. Define $f\colon \M \to \R$ by 
	$f(x)=d(c,x)  \text{ for all }  x\in \M$.
	Then,  
	$\prox_{\lambda f}(x) = \gamma(x,c,t)$, where 	$t =\min\big\{ \tfrac{\lambda}{d(x,c)} ,1 \big\}$.
\end{Lemma}

We denote the domain of a function $f\colon \M\to (-\infty,\infty]$ by $\dom f$ and interior of a set $C$ by $\intr(C)$.

\begin{Theorem} [\cite{Bergmann2016}]  \label{thmequiv}
	Let $\M$ be a Hadamard manifold and
	$\phi,\psi\colon\M \rightarrow (-\infty,\infty]$ be proper, geodesically convex and lower semicontinuous functions
	such that 
	$\intr(\dom \phi) \cap \dom \psi\neq \emptyset$ and solution set of problem \eqref{Pbm1} is nonempty. Let $\eta >0$.
	Then, for each solution $u$ of problem \eqref{Pbm1} there exists
	a fixed point $v$ of $R_{\eta \phi}R_{\eta \psi}$
	such that 
	$u = \prox_{\eta \psi}v$.
	Conversely, if a fixed point $v$
	of ${R}_{\eta \phi} {R}_{\eta \psi}$ exists,
	then $u=\prox_{\eta \psi}v$ is a solution of problem \eqref{Pbm1}.
\end{Theorem}

Let $C$ be a nonempty subset of a Hadamard manifold $\M$. The indicator function $\iota_C\colon \M \to (-\infty, \infty]$ is defined by 
$$\iota_C=\begin{cases}
	0, & \text{ if } x\in C,\\
	\infty, & \text{otherwise}.
\end{cases}$$

Next, we present some results for functions with nonexpansive reflections.
\begin{Proposition} [\cite{Leon2013,Bergmann2016}] \label{kappapr2}
	Let  $C$ be a nonempty, closed and geodesically convex subset of Hadamard manifold $\M$ with constant  sectional curvature. Then
	$$d(R_{\iota_C}(x),R_{\iota _C}(y)) \leq d(x,y)  \text{ for all }  x,y \in C.$$
\end{Proposition}

\begin{Theorem}  [\cite{Bergmann2016}] \label{distNE}
	Let $\M$ be a Hadamard Manifold, $\lambda > 0$ and $c \in \M$. Let
	$g\colon \M \to \R$ be defined by
	$g(x)=d(c,x)  \text{ for all }  x\in \M$. Then, the reflection $R_{\lambda g}$ is nonexpansive.
\end{Theorem}

\begin{Proposition}\label{Newpr3}
	\tr{Let $\M$ be a Hadamard manifold with zero sectional curvature. Then the reflection of a proper, geodesically convex and lower semicontinuous function $f\colon \M \to (-\infty,\infty]$  is nonexpansive.}
\end{Proposition}
\noindent
(The proof is given in Appendix~\ref{Ap3}.)

	\section{Efficient Douglas-Rachford Algorithms on Hadamard Manifolds}

In this section, we introduce two \hide{accelerated} algorithms for finding fixed points of nonexpansive operators on Hadamard manifolds. We also discuss the consequences of these algorithms for solving the minimization problem \eqref{Pbm1}.

First, we introduce a method that incorporates an inertial extrapolation acceleration term, given as follows:
\begin{Algorithm}\label{ALG1'}
	Let  $\M$ be a Hadamard manifold and $T \colon  \M \to \M$ be an operator.\\
	\textbf{Initialization:} Choose $x_0,x_1\in \M$ arbitrary.\\
	\textbf{Iterative step:} For the current $n^{th}$ iterate, compute the $(n+1)^{th}$ iteration as follows:
	\begin{equation}\label{ALGeq1'}
		\begin{cases}
			y_n= \exp_{x_n}(-\theta_n \exp_{x_n}^{-1} x_{n-1}),\\
			x_{n+1} = \gamma(y_n,Ty_n;\alpha_n)  \text{ for all }  n \in \N,
		\end{cases}
	\end{equation}
	where $\{\alpha_n\}$ is a sequence in $(0,1)$and $\{\theta_n\}$ is a sequence in $[0,1)$.
\end{Algorithm}

Note that method \eqref{ALGeq1'} is an extension of the inertial Mann iteration method \eqref{Mainge} from Hilbert spaces to the setting of Hadamard manifolds. Therefore, we also refer to it here as the inertial Mann iteration method.

For the convergence analysis of inertial Mann method \eqref{ALGeq1'}, we assume that the sequences $\{\alpha_n\}$ and $\{\theta_n\}$ satisfy the following conditions:
\begin{enumerate}[label=(C\arabic*), ref=C\arabic*]
	\item \label{C_1} $0 < a \leq \alpha_n \leq b < 1$ for all $n \in \mathbb{N}$ for some $a,b>0$,
	\item \label{C_2} $\{\theta_n\}$ is a nondecreasing sequence in $[0,1)$,
	\item \label{C_3} There exists $\theta \in (0,1)$ such that $0 \leq \theta_n \leq \theta < \frac{\epsilon}{1+\epsilon+ \max\{1, \epsilon\}}$ for all $n \in \mathbb{N}$, where $\epsilon = \frac{1-b}{b}$.
\end{enumerate}

\begin{Remark}\label{ParameterRem}
		\tr{The upper bound for the inertial parameter $\{\theta_n\}$ in the assumption of Bo\c{t} et al.~\cite{Bot2015} (see \eqref{Botassump}) depends on several auxiliary parameters $\alpha$, $\delta$, and $\sigma$. Consequently, determining an admissible range for $\{\theta_n\}$ requires careful tuning of these coupled parameters and is not straightforward. In contrast, under our condition \eqref{C_3}, an explicit upper bound for $\{\theta_n\}$ can be directly obtained once an upper bound for the sequence $\{\alpha_n\}$ is fixed. Intuitively, the parameter $\theta_n$ controls the amount of inertia introduced at each iteration, while the bound involving $\epsilon = (1-b)/b$ ensures that this inertial effect remains sufficiently small relative to the relaxation parameter $\alpha_n$. This  allows the parameters to be chosen independently, making the conditions easier to verify and more convenient to implement in practice than those in~\cite{Bot2015}.}
		\end{Remark}

\begin{Theorem} \label{InerThm}
	Let $\M$ be a Hadamard manifold and let $T\colon  \M \to \M$ be a nonexpansive operator such that $\fix(T)\neq \emptyset$.  Let $\{x_n\}$ be a sequence  generated by   inertial Mann method \eqref{ALGeq1'}, where $\{\alpha_n\}$ and $\{\theta_n\}$ are sequences satisfying the conditions \eqref{C_1}, \eqref{C_2} and \eqref{C_3}.  \tr{For $v\in \fix(T)$, define $\phi_n= d^2(x_n,v), \psi_n= \phi_n-\theta_n \phi_{n-1}+ K_n d^2(x_n,x_{n-1})$, where $K_n=  \theta_n (1+\theta_n+ \epsilon (1-\theta_n))$. Assume that $\psi_1>0$. Then, we have the following:
	\begin{itemize}
		\item[(i)] $\{\psi_n\}$ is nonincreasing.
		\item[(ii)]  $\sum_{n=1}^\infty d^2(x_{n+1},x_n) \leq \frac{\psi_1}{K(1-\theta)},$ where
		$K= \epsilon- \theta(1+\epsilon + \max\{1,\epsilon\}) >0$.
		\item[(iii)] $\lim_{n\to \infty} d(x_n,v)$ and $\lim_{n\to \infty} d(x_n,v)$  exists and $\lim_{n\to \infty} d(y_n,Ty_n)=0$. 
		\item[(iv)] $\{x_n\}$ converges to an element of $\fix(T)$.
	\end{itemize}}
\end{Theorem}
\begin{proof}
(i)	Let $v\in \fix(T)$. From $\eqref{ALGeq1'}$ and Proposition $\ref{Newpr2}$, we have
	\begin{equation} \label{IMpfeq2}
		d^2(y_n,v) \leq (1+\theta_n)d^2(x_n,v) - \theta_nd^2(x_{n-1},v) + \theta_n(1+\theta_n) d^2(x_n,x_{n-1}).
	\end{equation}
	From $\eqref{ALGeq1'}$ and Proposition $\ref{Newpr1}$, we have 
	\begin{align}
		d^2(x_{n+1},v) &= (1-\alpha_n) d^2(y_n,v)+ \alpha_n d^2(Ty_n,v)-\alpha_n(1-\alpha_n)d^2(y_n,Ty_n)\nonumber\\
		&\leq (1-\alpha_n) d^2(y_n,v)+ \alpha_n d^2(y_n,v)-\alpha_n(1-\alpha_n)d^2(y_n,Ty_n)\nonumber\\
		& = d^2(y_n,v)-\alpha_n (1-\alpha_n) d^2(y_n,Ty_n).\label{IMeq5}
	\end{align}
	Consider the geodesic triangle $\triangle(x_n,x_{n-1},v)$ and its comparison triangle $\triangle(x_n',x_{n-1}',v')$. Then 
	\begin{equation}\label{IMeq3}
		d(x_n,x_{n-1})=\|x_n'-x_{n-1}'\|, d( x_{n-1},v)=\|x_{n-1}'-v'\| \text{ and } d(x_n,v)=\|x_n'-v'\|.
	\end{equation}
	Since $y_n= \exp_{x_n}(-\theta_n \exp_{x_n}^{-1}x_{n-1})$, its comparison point will be 
	\begin{equation}\label{IMpfeq1}
		y_n'= (1+\theta_n)x_n'+(-\theta_n) x_{n-1}'.
	\end{equation}
	Now consider the geodesic triangle $\triangle(y_n,{Ty_n},v)$.  Without loss of generality, we can choose its comparison triangle to be $\triangle(y_n', Ty_n',\overline{v})$, where $y_n'$ is the same as defined in \eqref{IMpfeq1}. Since $x_{n+1}= \exp_{y_n} (\alpha_n \exp_{y_n}^{-1}Ty_n)$, its comparison point will be
	\begin{equation}\label{comparisonpoint}
		x_{n+1}'= (1-\alpha_n)y_n'+ \alpha_n Ty_n'.
	\end{equation}
	Without loss of generality, we can also choose $Ty_n'$  such that 
	\begin{equation}\label{Mainineq}
		d(x_n,x_{n+1})=\|x_n'-x_{n+1}'\|
	\end{equation}
	as shown in Figure \ref{figure1}. Then 
	\begin{equation}\label{IMeq4}
		d(y_n,Ty_n)=\|y_n'-Ty_n'\|, d(Ty_n,v)=\|Ty_n'-\overline{v}\| \text{ and } d(y_n,v)=\|y_n'-\overline{v}\|.
	\end{equation}
	From \eqref{comparisonpoint}, we have $x_{n+1}'  = \alpha_n( Ty_n'- y_n')+y_n'$,
	which implies that 
	\begin{equation}\label{IMeq4'}
		Ty_n'-y_n'= \frac{1}{\alpha_n} (x_{n+1}'-y_n').
	\end{equation}
	Since $x_{n+1}'$ is comparison point of $x_{n+1}$, we have $d(x_{n+1},y_n)=\|x_{n+1}'-y_n'\|$. It follows 
	from \eqref{IMeq4} and  \eqref{IMeq4'} that 
	\begin{equation*}
		d^2(Ty_n,y_n)=\|Ty_n'-y_n'\|^2= \frac{1}{\alpha_n^2} \|x_{n+1}'-y_n'\|^2 = \frac{1}{\alpha_n^2} d^2(x_{n+1},y_n).
	\end{equation*}
	Thus, from \eqref{IMeq5}, we have 
	\begin{align*}
		d^2(x_{n+1},v)&\leq d^2(y_n,v)- \frac{1-\alpha_n}{\alpha_n}d^2(x_{n+1},y_n) \leq d^2(y_n,v)- \frac{1-b}{b}d^2(x_{n+1},y_n).
	\end{align*}
	It follows from \eqref{IMpfeq2} that
	\begin{equation}\label{IMpfeq8}
		d^2(x_{n+1},v) \leq (1+\theta_n)d^2(x_n,v)-\theta_n d^2(x_{n-1},v)+ \theta_n(1+\theta_n) d^2(x_n,x_{n-1})- \epsilon d^2(x_{n+1},y_n).
	\end{equation}
	By $\eqref{IMpfeq1}$ and Lemma \ref{Lemma1}, we have  
	\begin{align}
		d^2(x_{n+1},y_n)&= \|x_{n+1}'-y_n'\|^2= \|x_{n+1}'-((1+\theta_n)x_n'+ (-\theta_n)x_{n-1}')\|^2\nonumber\\
		& = \|(x_{n+1}'-x_n')-\theta_n(x_n' -x_{n-1}')\|^2\nonumber\\
		& \geq (1-\theta_n) \|x_{n+1}'-x_n'\|^2 -\theta_n (1-\theta_n) \|x_n'-x_{n-1}'\|^2.\nonumber
	\end{align}
	using \eqref{IMeq3} and \eqref{Mainineq}, we have $d(x_n,x_{n-1})=\|x_n'-x_{n-1}'\|$ and $d(x_n,x_{n+1})=\|x_n'-x_{n+1}'\|$. Thus
	\begin{equation}\label{IMpfeq5}
		d^2(x_{n+1},y_n) \geq (1-\theta_n) d^2(x_{n+1},x_n) - \theta_n (1-\theta_n) d^2(x_n,x_{n-1}).
	\end{equation}
	\begin{figure}
		\centering
		\begin{tikzpicture}
			\draw[-,ultra thick] (-8,0) -- (2,0) node[below] {};
			\draw[-,ultra thick] (-8,0) -- (-4,5) node[below] {};
			\draw[-,ultra thick] (-4,5) -- (0,0) node[below] {};
			\draw[-,ultra thick] (2,0) -- (1,5) node[below] {};
			\draw[-,ultra thick] (1,5) -- (4,2) node[below] {};
			\draw[-,ultra thick] (4,2) -- (2,0) node[below] {};
			\draw[](-8,0)node[below]{$x_{n-1}'$};
			\draw[](-4,5)node[above]{$v'$};
			\draw[](0,0)node[below]{$x_n'$};
			\draw[](2,0)node[below]{$y_n'$};
			\draw[](1,5)node[above]{${Ty_n'}$};
			\draw[](4,2)node[right]{$\bar{v}$};
			\draw[](1.5,2.5)node[right]{$ x_{n+1}'$};
			\draw[dashed](0,0)--(1.5,2.5)node[left=0.5, rotate=60]{$d(x_n,x_{n+1})$};
		\end{tikzpicture}
		\caption{Selection of comparison points}
		\label{figure1}
	\end{figure}
	From \eqref{IMpfeq8} and \eqref{IMpfeq5}, we get
	\begin{align}
		d^2(x_{n+1},v) \leq& (1+\theta_n) d^2(x_n,v)-\theta_n d^2(x_{n-1},v)+ \theta_n(1+\theta_n) d^2(x_n,x_{n-1})\nonumber\\
		&-\epsilon [(1-\theta_n)d^2(x_n,x_{n+1})-\theta_n(1-\theta_n)d^2(x_n,x_{n-1})]\nonumber\\
		= &(1+\theta_n) d^2(x_n,v)-\theta_n d^2(x_{n-1},v) -\epsilon (1-\theta_n) d^2(x_{n+1},x_{n})\nonumber\\
		&+ \theta_n (1+\theta_n+ \epsilon (1-\theta_n)) d^2(x_n,x_{n-1}).\label{IMpfeq9}
	\end{align}
	Take $\phi_n= d^2(x_n,v), \psi_n= \phi_n-\theta_n \phi_{n-1}+ K_n d^2(x_n,x_{n-1})$, where $K_n=  \theta_n (1+\theta_n+ \epsilon (1-\theta_n))$. Then, from \eqref{IMpfeq9}, we have 
	\begin{equation*}
		\phi_{n+1} \leq (1+\theta_n) \phi_n - \theta_n \phi_{n-1} - \epsilon (1-\theta_n) d^2(x_{n+1},x_ n)+ K_n d^2(x_n,x_{n-1})  \text{ for all }  n \in \N.
	\end{equation*}
	By the definition of $\psi_n$, we obtain
	\begin{align}
		&\psi_{n+1}- \psi_n \nonumber\\
		&= \phi_{n+1} - \theta_{n+1} \phi_n + K_{n+1} d^2(x_{n+1},x_n)- \phi_n + \theta_n \phi_{n-1}- K_n d^2(x_n,x_{n-1})\nonumber\\
		& = \phi_{n+1} - (1+\theta_{n+1})\phi_n+ \theta_n \phi_{n-1} + K_{n+1} d^2(x_{n+1},x_n)- K_nd^2(x_n,x_{n-1})\nonumber\\
		& \leq \phi_{n+1}- (1+\theta_n) \phi_n+ \theta_n \phi_{n-1} + K_{n+1} d^2(x_{n+1},x_n)- K_nd^2(x_n,x_{n-1})\nonumber\\
		& \leq - \epsilon (1-\theta_n) d^2(x_{n+1},x_n)+ K_n d^2(x_n,x_{n-1})+ K_{n+1} d^2(x_{n+1},x_n)- K_nd^2(x_n,x_{n-1}) \nonumber\\
		&= -(\epsilon(1-\theta_n)-K_{n+1}) d^2(x_{n+1},x_n)  \text{ for all }  n \in \N.\label{IMpfeq10}
	\end{align}
	Note that 
	\begin{align*}
		K_n=  \theta_n (1+\theta_n+ \epsilon (1-\theta_n)) \leq \theta_n (1+\max \{1,\epsilon\})  \text{ for all }  n \in \N
	\end{align*}
	and hence 
	\begin{align*}
		-[\epsilon (1-\theta_n)-K_{n+1}] &\leq -\epsilon + \epsilon \theta_n + \theta_{n+1}(1+\max\{1,\epsilon\}) \leq -\epsilon + \theta (1+\epsilon + \max\{1,\epsilon\}).
	\end{align*}
	From condition \eqref{C_3}, we see that $K= \epsilon- \theta(1+\epsilon + \max\{1,\epsilon\}) >0$. Using \eqref{IMpfeq10}, we obtain
	\begin{equation}\label{IMpfeq11}
		\psi_{n+1}-\psi_n \leq -K d^2(x_{n+1},x_n)  \text{ for all }  n \in \N.
	\end{equation}
	Therefore $\{\psi_n\}$ is nonincreasing.\\
\tr{(ii)}	Note $\psi_1 = \phi_1-\theta_1 \phi_0+ K_1d^2(x_1,x_0) >0$. Since $\{\theta_n\}$ is bounded above by $\theta$, we get
	\begin{align*}
		\psi_n = \phi_n-\theta_n \phi_{n-1} + K_nd^2(x_n,x_{n-1})
		\geq \phi_n- \theta_n \phi_{n-1}
		\geq \phi_n -\theta \phi_{n-1}.
	\end{align*}
	Since $\{\psi_n\}$ is nonincreasing, we get 
	\begin{equation}\label{IMpfeq12}
		\phi_n-\theta \phi_{n-1} \leq \psi_n \leq \psi_1  \text{ for all }  n \in \N.
	\end{equation}
	Thus,
	\begin{align*}
		\phi_n  \leq \theta \phi_{n-1} + \psi_1
		\leq \theta (\theta \phi_{n-2}+\psi_1)+ \psi_1 \leq 
		\cdots
		&\leq \theta^n \phi_0 + \psi_1 \sum_{k=0}^{n-1} \theta^k\\
		&
		\leq \theta^n \phi_0 + \frac{\psi_1}{1-\theta}  \text{ for all }  n \in \N.
	\end{align*}
	It follows from \eqref{IMpfeq11} and $\eqref{IMpfeq12}$ that
	\begin{align*}
		K\sum_{k=1}^n d^2(x_{k+1},x_k) \leq \psi_1-\psi_{n+1}
		\leq\psi_1+ \theta \phi_n
		\leq \psi_1+ \theta\left(\phi_0 \theta^n + \frac{\psi_1}{1-\theta}\right)=
		\theta^{n+1} \phi_0 + \frac{\psi_1}{1-\theta}.
	\end{align*}
	Since $\theta^n \to 0$ as $n \to \infty$, we have 
	\begin{equation}\label{IMpfeq13}
		\sum_{n=1}^\infty d^2(x_{n+1},x_n) \leq \frac{\psi_1}{K(1-\theta)}.
	\end{equation}
\tr{(iii)} It follows from \eqref{IMpfeq13} that
	\begin{equation}\label{IMpfeqq1}
		\lim_{n \to \infty} d(x_n,x_{n+1}) =0.
	\end{equation}
	Using \eqref{IMeq3}, \eqref{Mainineq} and \eqref{IMpfeqq1}, we deduce that 
	\begin{align}
		d(x_{n+1},y_n)&=\|x_{n+1}'-y_n'\|= \|x_{n+1}'-[(1+\theta_n)x_n'+(-\theta_n) {x_{n-1}'}]\| \nonumber\\
		&= \|x_{n+1}'-x_n'+\theta_n (x_{n-1}'-x_n')\| \leq \|x_{n+1}'-x_n'\|+ \theta_n \|{x_{n-1}'}-x_n'\| \nonumber\\
		& = d(x_{n+1},x_n) + \theta_n d(x_{n-1},x_n) \to 0 \text{ as } n \to \infty. \label{IMpfeqq2}
	\end{align}
	Observe from \eqref{IMpfeqq1} and \eqref{IMpfeqq2} that 
	\begin{equation}\label{IMpfeqq3}
		d(x_n,y_n)\leq d(x_n,x_{n+1})+d(x_{n+1},y_n) \to 0 \text{ as } n \to \infty.
	\end{equation}
	From \eqref{IMpfeq9} and \eqref{IMpfeq11}, we have
	$$\phi_{n+1} \leq (1+\theta_n)\phi_n -\theta_n \phi_{n-1} + \theta (1+\max \{1,\epsilon\}) d^2(x_n,x_{n-1})  \text{ for all }  n \in \N.$$
	From \eqref{IMpfeq13} and Lemma \ref{LeSC3}, we obtain that $\lim_{n \to \infty} d(x_n,v)$ exists.
	We may assume that $\lim_{n \to \infty} d(x_n,v)=l >0$. It follows from \eqref{IMpfeqq3} that $\lim_{n \to \infty} d(y_n,v) =l.$ Hence from \eqref{IMeq5}, we obtain that 
	$\lim_{n \to \infty} d(y_n,Ty_n)=0.$\\
\tr{(iv)} Note that $\lim_{n \to \infty} d(y_n,v) =l$ exists and $\lim_{n \to \infty} d(y_n,Ty_n)=0$.	Hence $\{y_n\}$ satisfies the LEAF point property, therefore by Proposition \ref{LEAFpr1}, $\{y_n\}$ converges  to an element of $\fix(T)$. Since $\lim_{n\to \infty} d(x_n,y_n)=0$, we have $\{x_n\}$ converges to an element  of $\fix(T)$.
\end{proof}

\begin{Remark}
	Theorem \ref{InerThm} is the extension of  Theorem 3.3 of Sahu \cite{Sahusoft2020} from Hilbert spaces to the setting of Hadamard manifolds.
\end{Remark}

\begin{Remark}
	Note that the sequence $\{d(x_n, Tx_n)\}$ may not be decreasing for the inertial Mann iteration method \eqref{ALGeq1'}, even when $T$ is a nonexpansive operator. In such a situation, the accuracy of the sequence $\{x_n\}$ in approximating a fixed point of $T$ cannot be measured by $\{d(x_n, Tx_n)\}$. Therefore, we use $R_{T,\{x_n\}}(n)$ to evaluate the accuracy of $\{x_n\}$ in approximating a fixed point of $T$ when $\{d(x_n, Tx_n)\}$ is not essentially decreasing.
\end{Remark}

In the next result, we study the rate of convergence of the inertial Mann iteration method \eqref{ALGeq1'} in terms of $R_{T,\{x_n\}}(n)$.

\begin{Theorem}
Let $\M$ be a Hadamard manifold and let $T\colon  \M \to \M$ be a nonexpansive operator such that $\fix(T)\neq \emptyset$.  Let $\{x_n\}$ be a sequence  generated by   inertial Mann method \eqref{ALGeq1'} with \tr{$x_0=x_1\in \M$}, where $\{\alpha_n\}$ and $\{\theta_n\}$ are sequences satisfying the conditions \eqref{C_1}, \eqref{C_2} and \eqref{C_3}. Then $\{x_n\}$ converges to an element of $\fix(T)$ with the following rate of convergence:
\[R_{T,\{x_n\}}(n)= o\left(\frac{1}{\sqrt{n}}\right).\]
In fact for any $v\in \fix(T)$
	\[(R_{T,\{x_n\}}(n))^2\leq \frac{5}{a(1-b)}\left(1+\frac{\theta^2(1+\theta)}{K(1-\theta)^2}+\frac{\theta(1+\theta)}{K(1-\theta)}\right)\frac{d^2(x_0,v)}{n},\]
	where $K= \epsilon- \theta(1+\epsilon + \max\{1,\epsilon\})$.
\end{Theorem}

\begin{proof}
	Let $v\in \fix(T)$. From \eqref{IMpfeq2} and \eqref{IMeq5}, we have
		\begin{align*}
				d^2(x_{n+1},v)&\leq 	(1+\theta_n)d^2(x_n,v) - \theta_nd^2(x_{n-1},v) + \theta_n(1+\theta_n) d^2(x_n,x_{n-1})\\
				&\quad -\alpha_n (1-\alpha_n) d^2(y_n,Ty_n).
			\end{align*}
			It follows  from condition \eqref{C_1} that
				\begin{align*}
				&d^2(x_{n+1},v)-(1+\theta_n)d^2(x_n,v) + \theta_nd^2(x_{n-1},v) \nonumber\\
				& \leq  -\alpha_n (1-\alpha_n) d^2(y_n,Ty_n)+\theta_n(1+\theta_n) d^2(x_n,x_{n-1})\nonumber\\
				&\leq  -a(1-b)d^2(y_n,Ty_n)+\theta_n(1+\theta_n) d^2(x_n,x_{n-1}),
			\end{align*}
			which implies that
				\begin{align}
				&(d^2(x_{n+1},v)-d^2(x_n,v)) - \theta_n(d^2(x_n,v)-d^2(x_{n-1},v)) \nonumber\\
				&\leq  -a(1-b)d^2(y_n,Ty_n)+\theta_n(1+\theta_n) d^2(x_n,x_{n-1}).\label{RoCeq6}
			\end{align}
			Let $\delta_n=\theta_n(1+\theta_n)d^2(x_n,x_{n-1})$, $\phi_n=d^2(x_n,v)$,  $V_n=\phi_n-\phi_{n-1}$ and $[V_n]_+=\max\{V_n,0\}$ for all $n\in \N$.  From \eqref{RoCeq6}, we have
			\begin{align}
				a(1-b)d^2(y_n,Ty_n) &\leq \phi_n-\phi_{n+1} +\theta_n (\phi_n-\phi_{n-1})+\delta_n\nonumber\\
				&=  \phi_n-\phi_{n+1}+\theta_n V_n+\delta_n.\nonumber\\
				&\leq \phi_n-\phi_{n+1}+\theta_n [V_n]_++\delta_n.\label{RoCeq7}
			\end{align}
			From \eqref{IMpfeq13}, we have 
				\begin{equation}\label{RoCeq8}
				\sum_{n=1}^\infty d^2(x_{n+1},x_n) \leq \frac{\psi_1}{K(1-\theta)}.
			\end{equation}
			Note that $x_0=x_1$.  Then $\psi_1=d^2(x_1,v)-\theta_1d^2(x_0,v)+K_1d^2(x_0,x_1) \leq d^2(x_0,v)$. From \eqref{RoCeq8}, we have 
			\begin{equation*}
				\sum_{n=1}^\infty d^2(x_{n+1},x_n) \leq \frac{d^2(x_0,v)}{K(1-\theta)}.
			\end{equation*}
			It follows from condition \eqref{C_3} that 
			\begin{align}
				\sum_{n=1}^{\infty}\delta_n=\sum_{n=1}^{\infty} \theta_n(1+\theta_n)d^2(x_n,x_{n-1})
				\leq \sum_{n=1}^{\infty} \theta(1+\theta)d^2(x_n,x_{n-1})
				 \leq \frac{ \theta(1+\theta)d^2(x_0,v)}{K(1-\theta)}. \label{RoCeq14}
			\end{align}
			From \eqref{RoCeq6}, we have
			\begin{align*}
		 V_{n+1}\leq \theta_n V_n+\delta_n \leq \theta [V_n]_++\delta_n.
			\end{align*}
			Therefore
			\begin{align*}
				[V_{n+1}]_+\leq \theta [V_n]_++\delta_n.
			\end{align*}
			Applying the recurrence repeatedly, we get 
				\begin{align}\label{RoCeq9}
				[V_{n+1}]_+\leq \theta^n [V_1]_+ +\sum_{j=1}^{n}\theta^{j-1}\delta_{n+1-j}.
			\end{align}
			Since $x_0=x_1$, we get $V_1=[V_1]_+=0$ and $\delta_1=0.$
			From \eqref{RoCeq9}, we have
			\begin{align}
				\sum_{n=2}^{\infty}[V_n]_+\leq \frac{1}{1-\theta} \sum_{n=1}^{\infty}\delta_n=\frac{1}{1-\theta} \sum_{n=2}^{\infty}\delta_n\leq\frac{ \theta(1+\theta)d^2(x_0,v)}{K(1-\theta)^2}.\label{RoCeq13}
			\end{align}
			Since $\phi_1=\phi_0=d^2(x_0,v)$. From   \eqref{RoCeq7}, \eqref{RoCeq14} and \eqref{RoCeq13},  we get 
			\begin{align*}
				a(1-b)\sum_{i=1}^nd^2(y_n,Ty_n)&\leq \phi_1-\phi_n+\theta\sum_{i=1}^n[V]_+ +\sum_{i=2}^n\delta_i \\
				&\leq d^2(x_0,v)+\frac{ \theta^2(1+\theta)d^2(x_0,v)}{K(1-\theta)^2}+\frac{ \theta(1+\theta)d^2(x_0,v)}{K(1-\theta)}\\
				 & \leq \left(1+\frac{\theta^2(1+\theta)}{K(1-\theta)^2}+\frac{\theta(1+\theta)}{K(1-\theta)}\right)d^2(x_0,v).
			\end{align*}
			This implies that 
		\begin{align}
				\sum_{i=1}^nd^2(y_n,Ty_n) \leq \frac{1}{a(1-b)}\left(1+\frac{\theta^2(1+\theta)}{K(1-\theta)^2}+\frac{\theta(1+\theta)}{K(1-\theta)}\right)d^2(x_0,v).\label{RoCeq11}
		\end{align}
				By using the nonexpansivity of $T$, $\eqref{ALGeq1'}$ and Proposition $\ref{Newpr1}$,  we deduce that 
			\begin{align*}
				d^2(x_{n+1},Tx_{n+1})&=d^2(\gamma(y_n,Ty_n;\alpha_n),Tx_{n+1})\\
				&\leq (1-\alpha_n)d^2(y_n,Tx_{n+1})+\alpha_n d^2(Ty_n,Tx_{n+1})-\alpha_n(1-\alpha_n)d^2(y_n,Ty_n)\\
				&\leq (d(y_n,Ty_n)+d(Ty_n,Tx_{n+1}))^2+d^2(Ty_n,Tx_{n+1})\\
				& \leq 2 (d^2(y_n,Ty_n)+d^2(Ty_n,Tx_{n+1}))+d^2(Ty_n,Tx_{n+1})\\
				& \leq 2d^2(y_n,Ty_n)+3d^2(y_n,x_{n+1})\\
				& \leq 2d^2(y_n,Ty_n)+3d^2(y_n,\gamma(y_n,Ty_n;\alpha_n))\\
				& \leq 2d^2(y_n,Ty_n)+3\alpha_n^2d^2(y_n,Ty_n)\\
				&\leq 5 d^2(y_n,Ty_n).
			\end{align*}
			From \eqref{RoCeq11}, we have
			\begin{align}\label{RoCeq15}
				\sum_{i=1}^n d^2(x_{i+1},Tx_{i+1}) \leq 5	\sum_{i=1}^n d^2(y_i,Ty_i) \leq  \frac{5}{a(1-b)}\left(1+\frac{\theta^2(1+\theta)}{K(1-\theta)^2}+\frac{\theta(1+\theta)}{K(1-\theta)}\right)d^2(x_0,v),
			\end{align}
		\tr{	which implies that  $\sum_{i=1}^n (R_{T,\{x_n\}}(n))^2 <\infty$.
It follows from Lemma \ref{DongLemma} that \[R_{T,\{x_n\}}(n)= o\left(\frac{1}{\sqrt{n}}\right).\]} 
		\tr{	Moreover, by \eqref{RoCeq15}, we obtain
			\[	(R_{T,\{x_n\}}(n))^2=\min_{1\leq i\leq n}d^2(x_{n+1},Tx_{n+1}) \leq \frac{5}{a(1-b)}\left(1+\frac{\theta^2(1+\theta)}{K(1-\theta)^2}+\frac{\theta(1+\theta)}{K(1-\theta)}\right)\frac{d^2(x_0,v)}{n}.\]}
\end{proof}

Next, we introduce another accelerated method that does not involve an inertial term. Unlike the inertial Mann iteration method, which requires working in the entire space, the following method can be applied on a convex domain.

\begin{Algorithm}\label{paccalg}
	Let $C$ be a nonempty, closed and geodesically convex subset of a Hadamard manifold $\M$, $p\in \N$ and let $T\colon  C\to C$ be an operator. \\
	\textbf{Initialization:} Choose $x_1\in C$ arbitrary.\\
	\textbf{Iterative step:} For the current $n^{th}$ iterate, compute the $(n+1)^{th}$ iteration as follows:
	\begin{equation}\label{accpeq}
		\begin{cases}
			y_n=\gamma(x_n,Tx_n;\alpha_n),\\
			x_{n+1}=T^p(y_n)  \text{ for all }  n\in \N,
		\end{cases}
	\end{equation}
	where $\{\alpha_n\}$ is a sequence in $(0,1)$.
\end{Algorithm}

Note that this method is inspired from the $p$-accelerated normal S-iteration method \eqref{paccmethod}; accordingly, we refer to it by the same name.
We now study the convergence of the $p$-accelerated normal S-iteration method \eqref{accpeq}.

\begin{Theorem} \label{paccthm}
	Let $C$ be a nonempty, closed and geodesically convex subset of a Hadamard manifold $\M$ and let $T\colon  C\to C$ be a nonexpansive operator such that  $\fix(T)\neq \emptyset$. Let $\{x_n\}$  be the sequence generated by $p$-accelerated normal S-iteration method  \eqref{accpeq}, where $\{\alpha_n\}$ is  a sequence in $(0,1)$ satisfying the condition $\sum_{n=1}^{\infty}\alpha_n (1-\alpha_n)=\infty$. \tr{Then, we have the following:
	\begin{enumerate}
		\item[(i)] $\lim_{n\to \infty} d(x_n,v)$ exists for any $v\in \fix(T)$.
		\item[(ii)] $\sum_{n=1}^{\infty}\alpha_n(1-\alpha_n) d^2(x_n,Tx_n)<\infty$.
		\item[(iii)] $\{d(x_n,Tx_n)\}$ is decreasing.
		\item[(iv)] $\{x_n\}$ converges to an element of $\fix(T)$.
	\end{enumerate}}
\end{Theorem}
\begin{proof}
(i)	Let $v\in \fix(T)$. From \eqref{accpeq} and Proposition \ref{disconv2}, we have
	\begin{align}
		d(y_n,v)&=d(\gamma(x_n,Tx_n;\alpha_n),v)\nonumber\\
		& \leq (1-\alpha_n)d(x_n,v)+\alpha_n d(Tx_n,v)\nonumber\\
		&\leq (1-\alpha_n)d(x_n,v)+\alpha_n d(x_n,v)=d(x_n,v). \label{pacceq1}
	\end{align}
	From \eqref{accpeq}, \eqref{pacceq1} and nonexpansivity of $T$,  we have 
	\begin{equation*}
		d(x_{n+1},v)=d(T^p(y_n),v)\leq d(T^{p-1}y_n,v)\leq \cdots\leq d(y_n,v) \leq d(x_n,v).
	\end{equation*}
(ii)	From Proposition \ref{Newpr1}, we have
	\begin{align}
		d^2(x_{n+1},v)&= d^2(T^p(y_n),v) \nonumber\\
		& \leq d^2(y_n,v)\nonumber\\
		& =d^2(\gamma(x_n,Tx_n;\alpha_n),v)\nonumber\\
		& \leq (1-\alpha_n)d^2(x_n,v)+\alpha_n d^2(Tx_n,v)-\alpha_n(1-\alpha_n)d^2(x_n,Tx_n)\nonumber\\
		& =d^2(x_n,v)-\alpha_n(1-\alpha_n)d^2(x_n,Tx_n). \nonumber
	\end{align}
	It follows that 
	\begin{equation*}
		\alpha_n(1-\alpha_n)d^2(x_n,Tx_n) \leq d^2(x_n,v)-d^2(x_{n+1},v).
	\end{equation*}
	Summing up the above inequality from $n=1$ to $k$, we have
	\begin{equation}\label{pacceq2}
			\sum_{n=1}^k	\alpha_n(1-\alpha_n)d^2(x_n,Tx_n) \leq \sum_{n=1}^k (d^2(x_n,v)-d^2(x_{n+1},v))=d^2(x_0,v),
	\end{equation}
	which implies that 
	\begin{equation}\label{pacceq3}
		\sum_{n=1}^\infty	\alpha_n(1-\alpha_n)d^2(x_n,Tx_n) < \infty.
	\end{equation}
(iii) Using the nonexpansivity of $T$ and Proposition \ref{disconv2}, we have 
\begin{align}
	d(x_{n+1},Tx_{n+1})& = d(T^p(y_n),T^{p+1}(y_n))\nonumber\\
	& \leq d(y_n,Ty_n)\nonumber\\
	& \leq d(y_n,Tx_n)+d(Tx_n,Ty_n)\nonumber\\
	& \leq d(y_n,Tx_n)+d(x_n,y_n)\nonumber\\
	& = d(\gamma(x_n,Tx_n;\alpha_n), Tx_n)+d(\gamma(x_n,Tx_n;\alpha_n), x_n)\nonumber\\
	& = (1-\alpha_n)d(x_n,Tx_n)+\alpha_nd(Tx_n,Tx_n)+ (1-\alpha_n)d(x_n,x_n)+\alpha_nd(Tx_n,x_n)\nonumber\\
	& = (1-\alpha_n)d(x_n,Tx_n)+\alpha_nd(Tx_n,x_n)=d(x_n,Tx_n). \nonumber
\end{align}	
(iv)	Using \eqref{pacceq3}, we have
	\begin{equation}\label{pacceq4}
		\liminf_{n\to \infty}d(x_n,Tx_n)=0,
	\end{equation}
	because otherwise $d(x_n,Tx_n)\geq K$  for all $n \in \N$ for some $K>0$ and then
	$$\sum_{n=1}^\infty \alpha_n (1-\alpha_n)d^2(x_n,Tx_n)\geq K \sum_{n=1}^\infty \alpha_n (1-\alpha_n)=\infty,$$
	which is a contradiction to the condition \eqref{pacceq3}.
From (iii) sequence $\{d(x_n,Tx_n)\}$ is decreasing.	Combining this fact with \eqref{pacceq4} gives $\lim_{n \to \infty}d(x_n,Tx_n)=0$. Hence, $\{x_n\}$ satisfies the  LEAF point property. Therefore, from Proposition \ref{LEAFpr1},  $\{x_n\}$ converges to an element of $\fix(T)$.
\end{proof}

\begin{Theorem}
	\tr{Let $C$ be a nonempty, closed and geodesically convex subset of a Hadamard manifold $\M$ and let $T\colon  C\to C$ be a nonexpansive operator such that  $\fix(T)\neq \emptyset$. Let $\{x_n\}$  be the sequence generated by $p$-accelerated normal S-iteration method  \eqref{accpeq}, where $\{\alpha_n\}$ is  a sequence in $(0,1)$ satisfying the condition $\sum_{n=1}^{\infty}\alpha_n (1-\alpha_n)=\infty$. Then, we have the following:}
	\begin{enumerate}
		\item[(i)] $d(x_n,Tx_n) \leq \frac{d(x_0,v)}{\sqrt{\sum_{i=1}^n	\alpha_i(1-\alpha_i)}}$ and  $d(x_n,Tx_n)=o\left(\frac{1}{\sqrt{\sum_{i=1}^{n}\alpha_i(1-\alpha_i)}}\right)$.
		\item[(ii)] If in addition $0< a\leq \alpha_n \leq b<1$, then $d(x_n,Tx_n)=o\left(\frac{1}{\sqrt{n}}\right)$.
	\end{enumerate}
\end{Theorem}
\begin{proof}
(i)	Using \eqref{pacceq2}, we have
	\begin{align}
		\sum_{i=1}^n	\alpha_i(1-\alpha_i)d^2(x_i,Tx_i) \leq \sum_{i=1}^n (d^2(x_i,v)-d^2(x_{i+1},v))=d^2(x_0,v). \label{RoCeq2}
	\end{align}
	From Theorem \ref{paccthm}(iii), sequence $\{d(x_n,Tx_n)\}$ is decreasing. It follows from \eqref{RoCeq2} that 
	\begin{align*}
		d^2(x_n,Tx_n) \sum_{i=1}^n	\alpha_i(1-\alpha_i)\leq d^2(x_0,v).
	\end{align*}
	This implies that 
	\begin{align*}
		d(x_n,Tx_n) \leq \frac{d(x_0,v)}{\sqrt{\sum_{i=1}^n	\alpha_i(1-\alpha_i)}}. 
	\end{align*}
	From Theorem \ref{paccthm}(ii), we have $\sum_{n=1}^{\infty}\alpha_n(1-\alpha_n) d^2(x_n,Tx_n)<\infty$. Since $\sum_{n=1}^{\infty}\alpha_n (1-\alpha_n)=\infty$, it follows from Lemma \ref{DongLemma} that 
    \[d(x_n,Tx_n)=o\left(\frac{1}{\sqrt{\sum_{i=1}^{n}\alpha_i(1-\alpha_i)}}\right).\]
    (ii) \tr{Since $0< a\leq \alpha_n \leq b<1$, from Theorem \ref{paccthm}(ii), we have 
        $\sum_{n=1}^{\infty}a(1-b) d^2(x_n,Tx_n)<\infty$ or  $\sum_{n=1}^{\infty} d^2(x_n,Tx_n)<\infty$.
        Again  it follows from Lemma \ref{DongLemma} that 
    	\[d(x_n,Tx_n) =o\left(\frac{1}{\sqrt{n}}\right).\]}
\end{proof}

\begin{Remark}
	While both  Inertial Mann method \eqref{ALGeq1'} and $p$-accelerated normal S-iteration method \eqref{accpeq} aim to compute a fixed point of the nonexpansive mapping $T$, their convergence behavior differs significantly in practice.
	The $p$-accelerated normal S-iteration method \eqref{accpeq} generates a relatively stable sequence, and the fixed-point residual $d(Tx_n, x_n)$ typically decreases in a monotonic fashion. As such, the value of $d(Tx_n, x_n)$ itself serves as a reliable indicator of the convergence rate.
	
	On the other hand, the inertial Mann iteration method \eqref{ALGeq1'}, which incorporates momentum via previous iterates, produce more oscillatory trajectories due to the presence of momentum terms. As a result, the standard residual $\{d(Tx_n, x_n)\}$ may not monotonically decrease, and may not reflect the actual rate at which the iterates approach the fixed point set. To better assess convergence behavior, we therefore consider the best iterate criterion 
	 $R_{T,\{x_n\}}(n)=\underset{{1 \leq i \leq n} }{\min} d(Tx_i, x_i)$, which is more appropriate for methods with inertial components.
\end{Remark}

Now we introduce two accelerated Douglas-Rachford-type algorithms for solving the minimization problem \eqref{Pbm1} on Hadamard manifolds.
By defining \( T = R_{\lambda \phi} R_{\lambda \psi} \), the classical Douglas-Rachford method can be seen as a special case of the Krasnoselskii-Mann iteration method. The proposed algorithms extend this idea to Hadamard manifolds and incorporate acceleration techniques. Specifically, both algorithms are constructed using the composition of the reflected resolvents \( R_{\lambda \phi} \) and \( R_{\lambda \psi} \),
following the structure of the inertial Mann iteration method \eqref{ALGeq1'} and the $p$-accelerated normal S-iteration method \eqref{accpeq}.

\begin{Algorithm}\label{ALG1DRS}
	Let  $\M$ be a Hadamard manifold and $\phi, \psi \colon  \M \to (-\infty,\infty]$ be proper, geodesically convex and  lower semicontinuous functions.\\
	\textbf{Initialization:} Choose $x_0,x_1\in \M$ arbitrary.\\
	\textbf{Iterative step:} For the current $n^{th}$ iterate, compute the $(n+1)^{th}$ iteration as follows:
	\begin{equation}\label{ALGeq1DRS}
		\begin{cases}
			y_n= \exp_{x_n}(-\theta_n \exp_{x_n}^{-1} x_{n-1}),\\
			x_{n+1} = \gamma(y_n,{R}_{\lambda \phi} {R}_{\lambda \psi}(y_n);\alpha_n)  \text{ for all }  n \in \N,
		\end{cases}
	\end{equation}
	where $\{\alpha_n\}$ and $\{\theta_n\}$ are sequences in $(0,1)$ and $\lambda >0$.
\end{Algorithm}

\begin{Algorithm}\label{paccalgDRS}
	Let  $\M$ be a Hadamard manifold and $\phi, \psi \colon  \M \to (-\infty,\infty]$ be proper, geodesically convex and lower semicontinuous functions. Let $p\in \N$.\\
	\textbf{Initialization:} Choose $x_1\in \M$ arbitrary.\\
	\textbf{Iterative step:} For the current $n^{th}$ iterate, compute the $(n+1)^{th}$ iteration as follows:
	\begin{equation}\label{accpeqDRS}
		\begin{cases}
			y_n=\gamma(x_n,{R}_{\lambda \phi} {R}_{\lambda \psi}(x_n);\alpha_n),\\
			x_{n+1}=({R}_{\lambda \phi} {R}_{\lambda \psi})^p(y_n)  \text{ for all }  n\in \N,
		\end{cases}
	\end{equation}
	where $\{\alpha_n\}$ is a sequence in $(0,1)$ and $\lambda >0$.
\end{Algorithm}

We refer to the method defined by \eqref{ALGeq1DRS} as the inertial Douglas-Rachford method, and the method defined by \eqref{accpeqDRS} as the $p$-accelerated normal S-Douglas-Rachford method.

\begin{Remark}
	\tr{If $\M=\R^m$, the Euclidean space, and we choose $A=\partial \phi$ and $B=\partial \psi$ in the inertial Douglas-Rachford algorithm of Bo\c{t} et al. \cite[Theorem 8]{Bot2015}, then their method is equivalent to Algorithm \ref{ALG1DRS}. Note that we use parameter conditions different from those in Bo\c{t} et al. \cite{Bot2015}; see Remark \ref{ParameterRem}.}
\end{Remark}

We now study the convergence analysis of the inertial Douglas-Rachford method \eqref{ALGeq1DRS} and  the $p$-accelerated normal S-Douglas-Rachford method \eqref{accpeqDRS}, utilizing the above results established for nonexpansive operators.

\begin{Theorem} \label{Corp-Acc}
	Let $\M$ be a Hadamard manifold and 
	let $\phi,\psi\colon \M \rightarrow (-\infty,+\infty]$ be proper, geodesically convex and  lower semicontinuous functions such that
	$\intr(\dom \phi)\cap \dom\psi \neq \emptyset$.  Let $\lambda >0$ and let ${R}_{\lambda \phi}{R}_{\lambda \psi}$ be nonexpansive such that $\fix({R}_{\lambda \phi}{R}_{\lambda \psi})\neq \emptyset$. Let $\{x_n\}$ be the sequence generated by  inertial Douglas-Rachford method
	\eqref{ALGeq1DRS},
	where $\{\alpha_n\}$ and $\{\theta_n\}$ are sequences  satisfying the conditions \eqref{C_1}, \eqref{C_2} and \eqref{C_3}. Then $\{x_n\}$ converges to an element  $v\in \fix({R}_{\lambda \phi} {R}_{\lambda \psi})$ such that  $\prox_{\lambda\psi}(v)$  is solution of problem \eqref{Pbm1}.
\end{Theorem}
\begin{proof}
	Since $T={R}_{\lambda \phi}{R}_{\lambda \psi}$ is nonexpansive. From Theorem \ref{InerThm}, sequence $\{x_n\}$ generated by  \eqref{ALGeq1DRS} converges to an element $v\in \fix({R}_{\lambda \phi}{R}_{\lambda \psi})$. From Theorem \ref{thmequiv}, $u=\prox_{\lambda \psi}(v)$ is solution of problem \eqref{Pbm1}.
\end{proof}

\begin{Theorem} \label{CorIn-M}
	Let $\M$ be a Hadamard manifold and 
	let $\phi,\psi\colon \M \rightarrow (-\infty,+\infty]$ be proper, geodesically convex and lower semicontinuous functions such that $\intr(\dom \phi)\cap \dom\psi \neq \emptyset$. Let $\lambda >0$ and let ${R}_{\lambda \phi}{R}_{\lambda \psi}$ be nonexpansive such that $\fix({R}_{\lambda \phi}{R}_{\lambda \psi})\neq \emptyset$. Let $\{x_n\}$ be the sequence generated by $p$-accelerated normal S-Douglas-Rachford method \eqref{accpeqDRS},
	where $\{\alpha_n\}$ is a sequences in $(0,1)$ satisfying the condition $\sum_{n=1}^\infty \alpha_n(1-\alpha_n)=\infty$. Then $\{x_n\}$ converges to an element  $v\in \fix({R}_{\lambda \phi} {R}_{\lambda \psi})$ such that  $\prox_{\lambda\psi}(v)$  is solution of problem \eqref{Pbm1}.
\end{Theorem}
\begin{proof}
	Since $T={R}_{\lambda \phi}{R}_{\lambda \psi}$ is nonexpansive. From Theorem \ref{paccthm}, sequence $\{x_n\}$ generated by  \eqref{accpeqDRS} converges to an element $v\in \fix({R}_{\lambda \phi}{R}_{\lambda \psi})$. From Theorem \ref{thmequiv}, $u=\prox_{\lambda \psi}(v)$ is solution of problem \eqref{Pbm1}.
\end{proof}

\section{Numerical Experiments}

In this section, we perform numerical experiments on a well-known nonconvex optimization problem, the Rosenbrock problem, defined as follows:
\begin{align}\label{Rosenbrock}
	\min_{x\in \R^2} a(x_1^2-x_2)^2+(x_1-b)^2,
\end{align}
where $a,b\in (0,\infty)$ (see \cite{Rosenbrock}). 
In order to handle the nonconvex problem \eqref{Rosenbrock} in  $\M=\R^2$, consider
\begin{equation*}
	G_x=\begin{pmatrix}
		1+4x_1^2 & -2x_1\\
		-2x_1&1
	\end{pmatrix}  \text{ for all }  x=(x_1,x_2)^T\in \R^2 ,
\end{equation*}
which has the inverse matrix
$$G_x^{-1}=\begin{pmatrix}
	1&2x_1\\
	2x_1&1+4x_1^2
\end{pmatrix}  \text{ for all }  x=(x_1,x_2)^T\in \R^2.$$
Consider the inner product on $\T_x\M=\R^2$ defined by
\begin{equation}\label{Rosenmetric}
	\langle u,v\rangle_x = u^TG_xv  \text{ for all }  u,v \in \T_x\M.
\end{equation}
Then $\M$ is a Hadamard manifold of zero sectional curvature with respect to metric \eqref{Rosenmetric} (see \cite{DaCruz}) and  the following  statements
hold:
\begin{enumerate}
	\item[(a)] The Riemannian distance between any $x=(x_1,x_2)^T$ and $y=(y_1,y_2)^T$ in $\M$ is  given by
	$$d(x,y)= \left((x_1-y_1)^2+ (x_1^2-y_1^2-x_2+y_2)^2\right)^{1/2}.$$
	\item[(b)] The geodesic $\gamma(x,y;\cdot)\colon[0,1]\to \M$ joining the points $x=(x_1,x_2)^T$ and $y=(y_1,y_2)^T$  is
	\begin{eqnarray*}
		\gamma(x,y;t)=\begin{pmatrix}
			x_1+t(y_1-x_1)\\
			(x_2+t((y_2-x_2)-(y_1-x_1^2)+
			t^2(y_1-x_1)^2)
		\end{pmatrix}.
	\end{eqnarray*}
	\item[(c)]  For $x=(x_1,x_2)^T$, the exponential map $\exp_x\colon \T_x\M\to \M$ is given by
	$$\exp_x(u)=\begin{pmatrix}
		x_1+u_1\\
		x_2+u_2+u_1^2
	\end{pmatrix}  \text{ for all }  u=(u_1,u_2)^T\in \T_x\M,$$
	which has the inverse  $\exp_x^{-1}\colon\M \to \T_x\M$ given by
	$$\exp_x^{-1}(y)=\begin{pmatrix}
		y_1-x_1\\
		y_2-x_2-(y_1-x_1)^2
	\end{pmatrix}  \text{ for all }  y=(y_1,y_2)^T\in \M.$$
\end{enumerate}

\begin{Remark}\label{Isomrem}
	\begin{enumerate}[label=(\roman*)]
		\item If $f\colon \M \to \R$ is  a real-valued function, then, the Riemannian gradient of $f$, $\grad f\colon \M \to T\M$, is given by 
		$$\grad f(x)=G_x^{-1}\nabla f(x)  \text{ for all }  x\in \M.$$
		\item If $\Phi\colon\R^2 \to \M$ is an operator defined by 
		$$\Phi(x)=(x_1,x_1^2-x_2)  \text{ for all }  x=(x_1,x_2)^T\in \R^2,$$
		then $\Phi$ is an isometry between the Euclidean space $\R^2$ and $\M$.
	\end{enumerate}
\end{Remark}
For $a,b\in (0,\infty)$, consider functions $\phi,\psi\colon \M\to \R$ defined by 
\begin{equation} \label{phi+psi}
	\phi(x)=a(x_1^2-x_2)^2 \text{ and } \psi(x)=(x_1-b)^2  \text{ for all }  x=(x_1,x_2)^T\in \M.
\end{equation}
Using the isometry $\Phi$, we get 
$$\phi\circ \Phi(x)=a(x_1^2-x_1^2+x_2)^2=ax_2^2  \text{ for all }  x=(x_1,x_2)^T\in \M$$
and 
$$\psi\circ \Phi(x)=(x_1-b)^2  \text{ for all }  x=(x_1,x_2)^T\in \M.$$
Thus $\phi$ and $\psi$ are geodesically convex on $\M$. For more details of this splitting; see \cite{BergmannJOTA2024}.

We observe that 
\begin{enumerate}
	\item[(i)]  
	The  Euclidean gradients of $\phi$ and $\psi$ are given by
	$$\nabla \phi(x)=\begin{pmatrix}
		4ax_1(x_1^2-x_2)\\
		-2a(x_1^2-x_2)
	\end{pmatrix} \text{ and } \nabla \psi(x)=\begin{pmatrix}
		2(x_1-b)\\
		0
	\end{pmatrix}  \text{ for all }  x=(x_1,x_2)^T\in \R^2.$$
	\item[(ii)]  In view of Remark \ref{Isomrem}(i), the gradient of $\phi$ and $\psi$ are given by
	$$\grad \phi(x)=\begin{pmatrix}
		0\\
		-2a(x_1^2-x_2)
	\end{pmatrix} \text{ and } \grad \psi(x)=\begin{pmatrix}
		2(x_1-b)\\
		4x_1(x_1-b)
	\end{pmatrix}  \text{ for all }  x=(x_1,x_2)^T\in \M.$$
	\item[(iii)]For $\lambda >0$,  the proximal operators $\prox_{\lambda \phi}$ and $\prox_{\lambda \psi}$ are defined by 
	$$\operatorname{Prox}_{\lambda \phi}(x)=\begin{pmatrix}
		x_1\\
		\frac{x_2+2a\lambda x_1^2}{1+2a\lambda}
	\end{pmatrix}  \text{ for all }  x=(x_1,x_2)^T\in \M \text{ and }$$
	$$\operatorname{Prox}_{\lambda \psi}(x)=\begin{pmatrix}
		\frac{x_1+2\lambda b}{1+2\lambda}\\
		x_2- \frac{4\lambda(x_1+2\lambda b)(x_1-b)+4\lambda^2(x_1-b)^2}{(1+2\lambda)^2}
	\end{pmatrix}  \text{ for all }  x=(x_1,x_2)^T\in \M.$$
	For $\lambda >0$, the reflection  $R_{\lambda \phi}$ of  $\prox_{\lambda \phi}$ and the reflection $R_{\lambda \psi}$ of $\prox_{\lambda \psi}$ are defined by
	\begin{equation}\label{Ref1}
		R_{\lambda \phi}(x)=\begin{pmatrix}
			x_1\\
			\frac{2(x_2+2a\lambda x_1^2)}{1+2a\lambda}-x_2
		\end{pmatrix}  \text{ for all }  x=(x_1,x_2)^T\in \M \text{ and }
	\end{equation}
	\begin{equation}\label{Ref2}
		R_{\lambda \psi}(x)=\begin{pmatrix}
			\frac{(1-2\lambda)x_1+4\lambda b}{1+2\lambda}\\
			x_2-\frac{8\lambda(x_1+2\lambda b)(x_1-b)}{(1+2\lambda)^2}
		\end{pmatrix}  \text{ for all }  x=(x_1,x_2)^T\in \M.
	\end{equation}
\end{enumerate}
	By Proposition \ref{Newpr3}, the reflections $R_{\lambda \phi}$ and $R_{\lambda \psi}$  defined by \eqref{Ref1} and \eqref{Ref2}, respectively, are nonexpansive.
	

Note that, the Rosenbrock problem \eqref{Rosenbrock} can be transformed into the following minimization problem:
\begin{equation}\label{RosenManif}
	\min_{x\in \M} \phi(x)+\psi(x),
\end{equation}
where $\phi$ and $\psi$ are defined by \eqref{phi+psi}.

For the numerical experiment, we compare three different methods: the Douglas-Rachford method [Alg(DR)] \eqref{Bergmannalg}, the inertial Douglas-Rachford method [Alg(InDR)] \eqref{ALGeq1DRS} and the $p$-accelerated normal S-Douglas-Rachford method [Alg($p$-AccDR)] \eqref{accpeqDRS}. All numerical tests are carried out using MATLAB R2024b on an Apple Macbook Air Laptop with a 1.8 GHz Dual-Core Intel Core i5 processor and 8 GB RAM. All algorithms stops when they meet the stopping criterion $E_n=d(s_n,s_{n-1})<10^{-14}$, where $s_n=\prox_{\lambda \psi}(x_n)$.

Numerical experiments are performed on the Rosenbrock  function \eqref{Rosenbrock} with parameters
\((a,b)=(1,2)\) and \((3,7)\).
The initial point is chosen as \(x_0=(1,2)^{\top}\) for all methods, while
Method~(InDR) uses \(x_1=(1,3)^{\top}\) in both cases.


We set $\lambda=1$ for all three algorithms and iteration parameters  are chosen as follows:

\begin{table*}[h]
	\centering
	\begin{tabular}{lllll}
		Algorithm & $(a,b)=(1,2)$ & $(a,b)=(1,2)$  & $(a,b)=(3,7)$& $(a,b)=(3,7)$ \\
		\hline
		Alg(DR) & $\alpha_n = 0.1$ & $\alpha_n = 0.3$ & $\alpha_n = 0.2$ & $\alpha_n = 0.4$ \\
		Alg(InDR) & $\alpha_n = 0.1,$ & $\alpha_n = 0.3,$ & $\alpha_n = 0.2,$ & $\alpha_n = 0.4,$ \\
		& $ \theta_n = 0.42$ & $ \theta_n = 0.35$ & $\theta_n = 0.4$ & $ \theta_n = 0.33$ \\
		Alg($p$-AccDR) & $\alpha_n = 0.1$ & $\alpha_n = 0.3$ & $\alpha_n = 0.2$ & $\alpha_n = 0.4$ \\
	\end{tabular}
\end{table*}
\tr{Note that in our parameter choice, we have taken the same $\alpha_n$ (in the Mann-type step) for all methods. Corresponding to this choice of $\alpha_n$, we selected the parameter $\theta_n$ according to condition \eqref{C_3} for Alg(InDR), that is, $\epsilon = (1-\alpha_n)/\alpha_n$, $\epsilon_1 = 1 + \epsilon + \max\{1,\epsilon\}$, and $\theta_n \approx 0.9(\epsilon/\epsilon_1)$. We chose $p = 2$, which is the smallest value that yielded the best performance in our experiments. These parameters were chosen empirically by testing several reasonable values.}


The runtime and number of iterations are presented in Table \ref{TableIner1} for $(a,b)=(1,2)$ and in Table \ref{TableIner2} for $(a,b)=(3,7)$. The graph of $E_n$ values with respect to the number of iterations $n$ is shown in Figure \ref{FigureIner1} for both cases. From Tables \ref{TableIner1}, \ref{TableIner2}, and Figure \ref{FigureIner1}, it is evident that Alg($p$-AccDR) is faster than Alg(InDR) and Alg(DR) in terms of both runtime and number of iterations.

\begin{table}[h]
	\centering
	\caption{Numerical performance of the runtime and number of iterations for Rosenbrock problem with $(a,b)=(1,2)$}
	\begin{tabular}{lllll}
		\hline
		 & $(\alpha_n, \theta_n)=(0.1,0.42)$&  & $(\alpha_n, \theta_n)=(0.3,0.35)$ &\\
		\hline
		Algorithm & Runtime(sec) & Iterations & Runtime(sec) & Iterations\\
		\hline
		Alg(DR)& 0.000800 &213 & 0.000468 &63\\
		Alg(InDR)& 0.000461 &102 & 0.000287&43\\
		Alg($p$-AccDR) & 0.000088 &15 & 0.000099 &13\\
		\hline
	\end{tabular}
	\label{TableIner1}
\end{table}

\begin{table}[h]
	\centering
	\caption{Numerical performance of the runtime and number of iterations for Rosenbrock problem with $(a,b)=(3,7)$}
	\begin{tabular}{lllll}
		\hline
		& $(\alpha_n, \theta_n)=(0.2,0.4)$&  & $(\alpha_n, \theta_n)=(0.4,0.33)$ &\\
		\hline
		Algorithm & Runtime(sec) & Iterations & Runtime(sec) & Iterations\\
		\hline
		Alg(DR)& 0.001003 &106 & 0.000623 &44\\
		Alg(InDR)& 0.000385 &55 & 0.000368 &37\\
		Alg($p$-AccDR) & 0.000150 &30 & 0.000130 &19\\
		\hline
	\end{tabular}
	\label{TableIner2}
\end{table}

\begin{figure}[htbp]
	\centering
	\begin{subfigure}[b]{0.48\textwidth}
		\includegraphics[width=\textwidth]{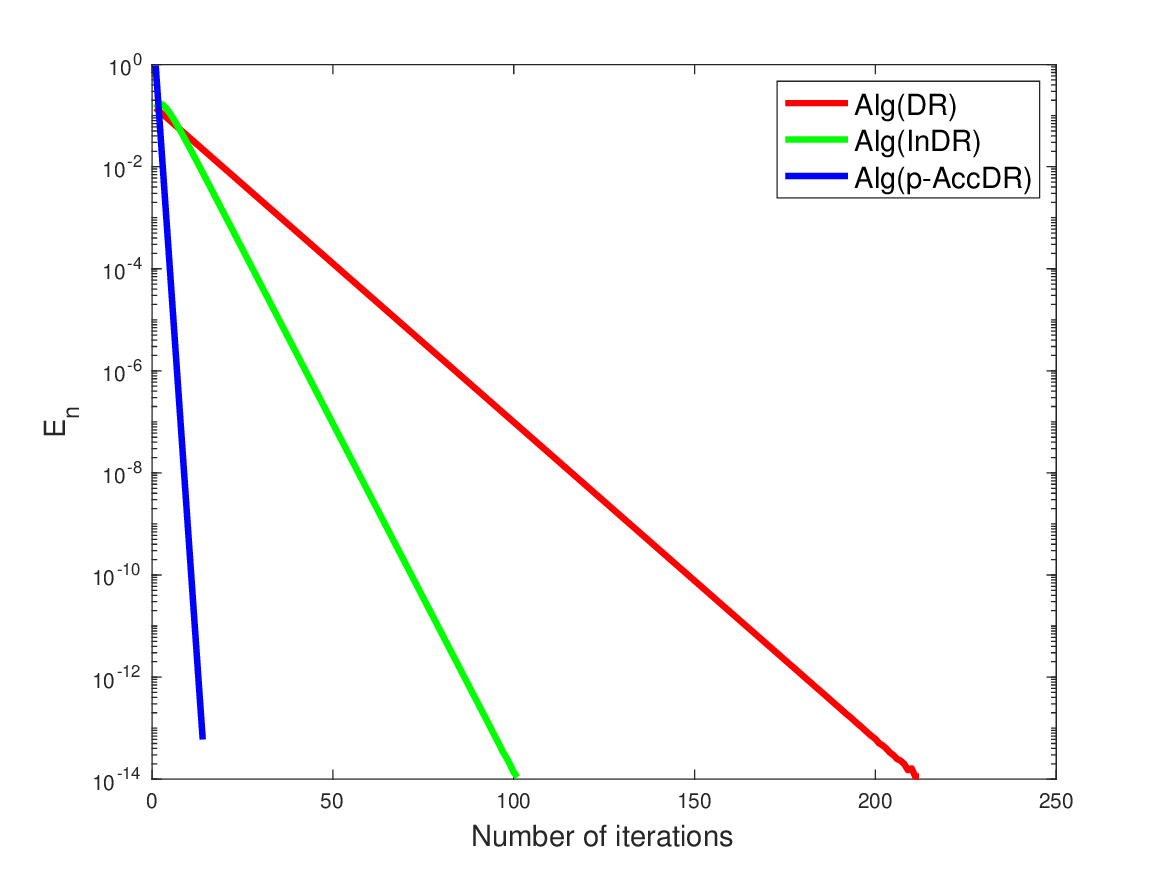}
		\caption{$(a,b)=(1,2), (\alpha_n,\theta_n)=(0.1,0.42)$}
		\label{}
	\end{subfigure}
	\hfill
	\begin{subfigure}[b]{0.48\textwidth}
		\includegraphics[width=\textwidth]{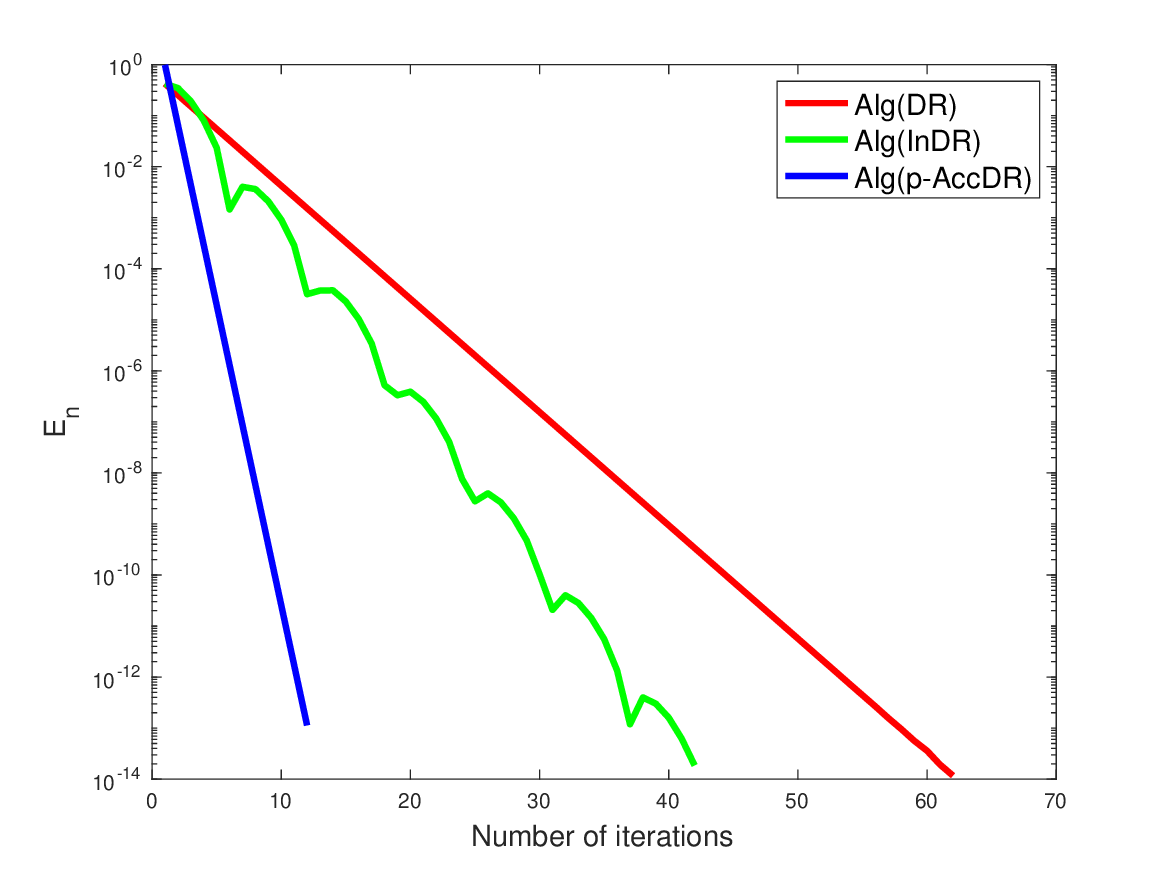}
		\caption{$(a,b)=(1,2), (\alpha_n,\theta_n)=(0.3,0.35)$}
		\label{}
	\end{subfigure}
	\hfill
	\begin{subfigure}[b]{0.48\textwidth}
		\includegraphics[width=\textwidth]{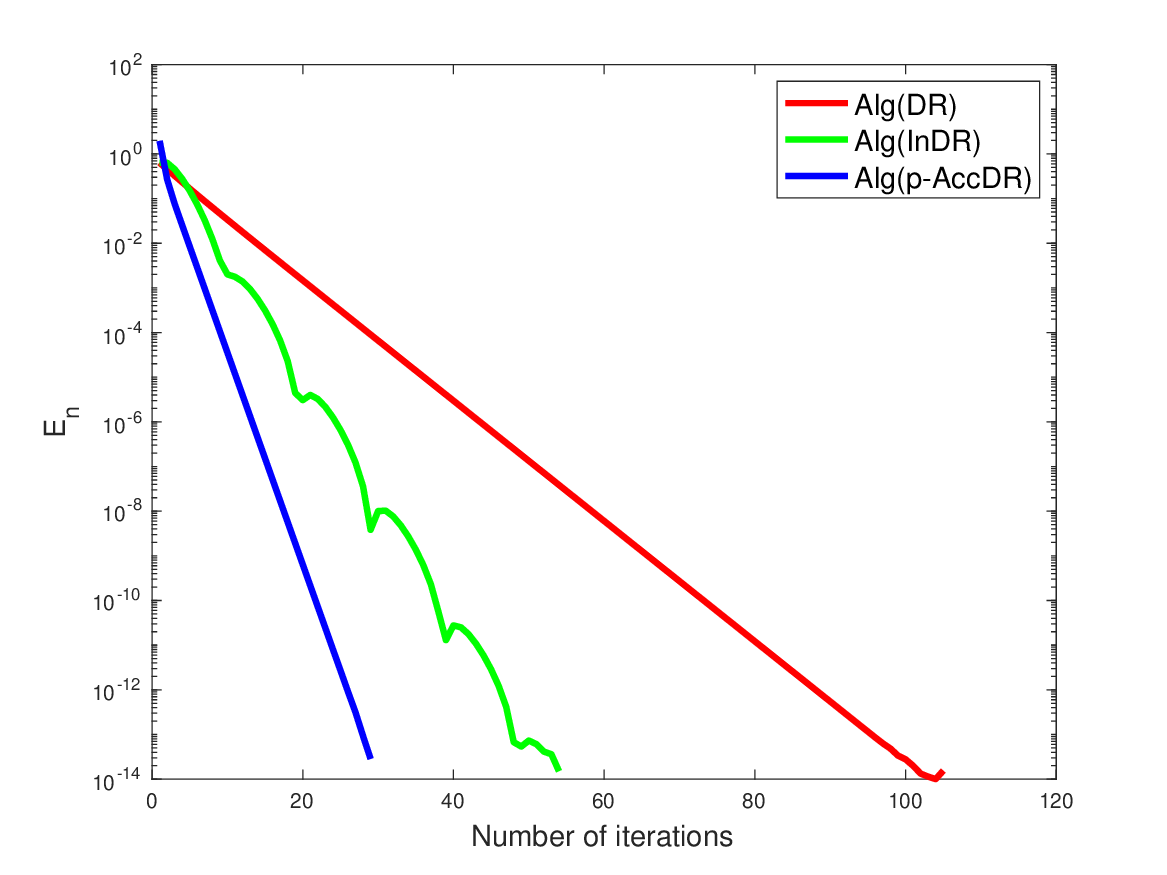}
		\caption{$(a,b)=(3,7), (\alpha_n,\theta_n)=(0.2,0.4)$}
		\label{}
	\end{subfigure}
	\hfill
	\begin{subfigure}[b]{0.48\textwidth}
		\includegraphics[width=\textwidth]{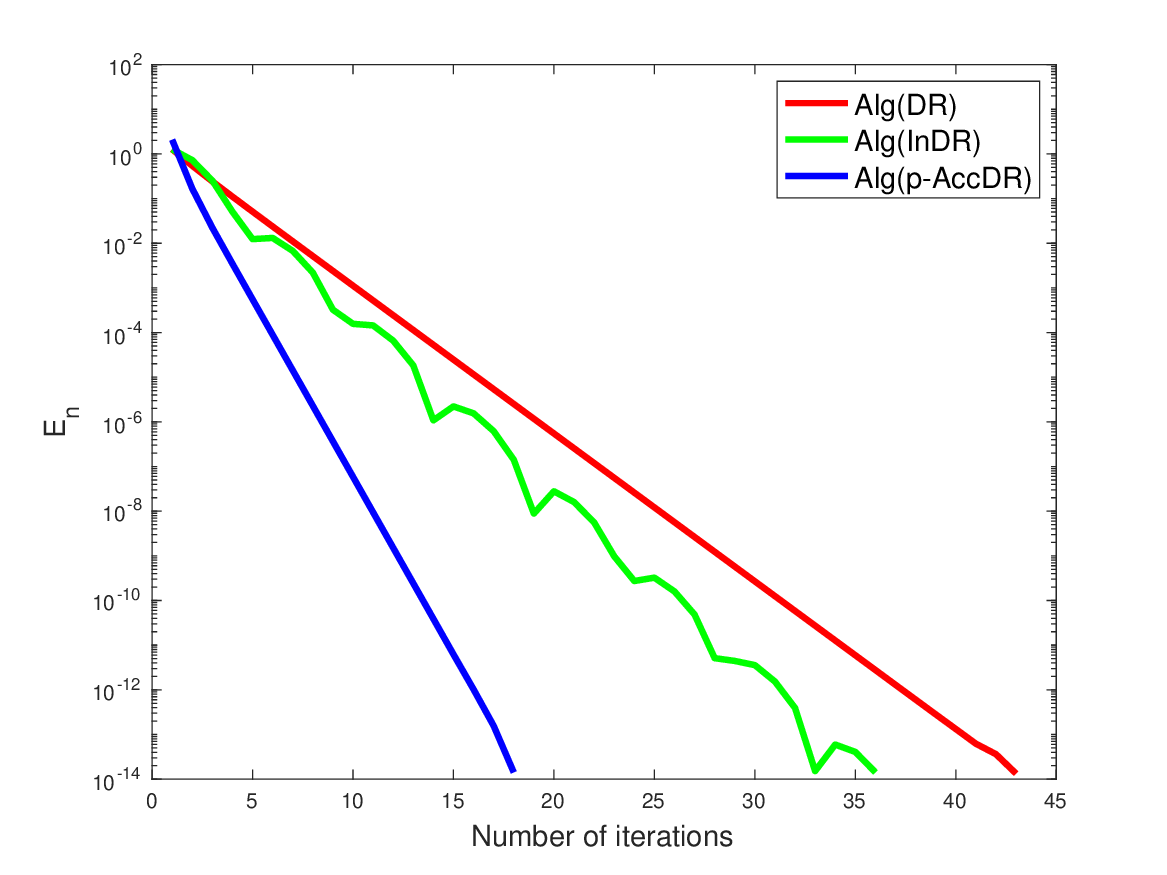}
		\caption{$(a,b)=(3,7), (\alpha_n,\theta_n)=(0.4,0.33)$}
		\label{}
	\end{subfigure}
	\caption{Comparison of $E_n$ with respect to number of iterations for the Rosenbrock Problem}
	\label{FigureIner1}
\end{figure}

\section{Applications}

In this section, we present applications of the proposed algorithms for solving the problem of minimizing a functional involving multiple summands. To this end, we develop parallel versions of the proposed algorithms and apply them to find the solution of generalized Heron problem on Hadamard manifolds.

\subsection{Minimization of functional containing multiple summands}

Let $\M$ be a Hadamard manifold.  Let $\varLambda_N=\{1,2,\ldots,N\}$ and  let $f_k\colon  \M \rightarrow (-\infty,+\infty]$, $k\in \varLambda_N$ be proper, geodesically convex and  lower semicontinuous functions.
Consider the following problem of  minimization of functionals containing multiple summands:
\begin{align} \label{Pbm2}
	\min_{x\in \M}  \sum_{k\in \varLambda_N} f_k(x).
\end{align}
Now, consider the product manifold $\M^{N}= \{\mathbf{x}=(x_1,\ldots,x_N)\colon  x_k\in \M, k\in \varLambda_N\}$. Define 
\begin{equation}\label{D}
	\mathbf{D}=\{\mathbf{x}\in\M^{N} \colon x_1=\dots=x_N\in\M\}.
\end{equation}
Then $\mathbf{D}$ is a nonempty, closed and geodesically convex set. Hence, $\iota_{\mathbf{D}}$ is a proper, geodesically convex and  lower semicontinuous function; see \cite{BacakNAA2014}. For 
$\mathbf{x}= (x_1,\ldots,x_N) \in \M^{N}$
\begin{equation*}
	\operatorname{Prox}_{\lambda\iota_{\mathbf{D}}}(\mathbf{x})
	= P_{\mathbf{D}}(\mathbf{x})
	= \Bigl(
	\amin_{x \in \M} \sum_{k\in \varLambda_N} d^2(x_k,x), \ldots, \amin_{x \in \M} \sum_{k\in \varLambda_N} d^2(x_k,x)\Bigl)
	\in\M^{N}.
\end{equation*}
Define $F\colon \M^{N}\to (-\infty,\infty]$ by 
\begin{equation}\label{F}
	F (\mathbf{x})=\sum_{k\in \varLambda_N} f_k(x_k)  \text{ for all }  \mathbf{x}=(x_1,\ldots,x_N)\in \M^{N}.
\end{equation}
We are interested in minimization of $F$ over $\mathbf{D}$. For this, we consider the following unconstrained minimization problem:
\begin{equation}\label{Pbm3}
	\min_{\mathbf{x}\in\M^{N}} F(\mathbf{x})+\iota_{\mathbf{D}}(\mathbf{x}).
\end{equation}
A detailed explanation of the above formulation is given in \cite{Bergmann2016}. Note that for $\lambda >0$
\begin{align*}
	\operatorname{Prox}_{\lambda F}(\mathbf{x})
	&=\left(\prox_{\lambda f_1}(x_1), \ldots, \prox_{\lambda f_N}(x_N) \right)  \text{ for all }  \mathbf{x}\in \M^{N}.
\end{align*}
It follows that the reflection of $F$ is just given by the componentwise reflections of $\prox_{\lambda f_k}$, i.e.,  
\begin{equation}\label{CompRefl}
	R_{\lambda F}(\mathbf{x})=\left(R_{\lambda f_1}(x_1),\ldots,R_{\lambda f_N}(x_N)\right)  \text{ for all }  \mathbf{x}\in \M^{N}.
\end{equation}

We now present two methods based on the inertial Douglas-Rachford method \eqref{ALGeq1DRS} and the $p$-accelerated normal S-Douglas-Rachford method \eqref{accpeqDRS} for finding a solution to problem \eqref{Pbm2}.

\begin{Corollary} 
	Let $\M$ be a Hadamard manifold and 
	let $f_k\colon \M \rightarrow (-\infty,+\infty]$, $k\in \varLambda_N$ be proper, geodesically convex and lower semicontinuous functions. Let $\mathbf{D}$ be the set defined by \eqref{D} and $F\colon \M^N \to (-\infty,+\infty]$ be the function defined by \eqref{F} such that $\intr(\dom F)\cap \dom\iota_{\mathbf{D}} \neq \emptyset$ and  ${R}_{\lambda F} {R}_{\lambda\iota_{\mathbf{D}}}$ is nonexpansive such that $\fix({R}_{\lambda F} {R}_{\lambda\iota_{\mathbf{D}}})\neq \emptyset$. Let $\{\mathbf{x}_n\}$ be the sequence generated by 
	\begin{equation}\label{ALGeq2'}
		\begin{cases}
			\mathbf{y}_n= \exp_{\mathbf{x}_n}(-\theta_n \exp_{\mathbf{x}_n}^{-1} \mathbf{x}_{n-1}),\\
			\mathbf{x}_{n+1} = \gamma(\mathbf{y}_n,{R}_{\lambda F} {R}_{\lambda\iota_{\mathbf{D}}}(\mathbf{y}_n);\alpha_n)  \text{ for all }  n \in \N,
		\end{cases}
	\end{equation}
	where $\{\alpha_n\}$ and $\{\theta_n\}$ are sequences satisfying the conditions \eqref{C_1}, \eqref{C_2} and \eqref{C_3}. Then $\{\mathbf{x}_n\}$ converges to an element  $\mathbf{v}\in \fix({R}_{\lambda F} {R}_{\lambda\iota_{\mathbf{D}}})$ such that  $\prox_{\lambda\iota_{\mathbf{D}}}(\mathbf{v})$ is solution of problem \eqref{Pbm3}.
\end{Corollary}
\begin{proof}
	Since $T=R_{\lambda F}R_{\lambda\iota_{\mathbf{D}}}$ is nonexpansive. From Theorem \ref{Corp-Acc}, sequence $\{\mathbf{x}_n\}$ generated by \eqref{ALGeq2'} converges to an element $\mathbf{v}\in \fix({R}_{\lambda F} {R}_{\lambda\iota_{\mathbf{D}}})$. From Theorem \ref{thmequiv}, we have $\mathbf{u}=\prox_{\lambda\iota_{\mathbf{D}}}(\mathbf{v})$ is solution of problem \eqref{Pbm3}.
\end{proof}

\begin{Corollary} 
	Let $\M$ be a Hadamard manifold and 
	let $f_k\colon \M \rightarrow (-\infty,+\infty]$, $k\in \varLambda_N$ be proper, geodesically convex and lower semicontinuous functions. Let $\mathbf{D}$ be the set defined by \eqref{D} and $F\colon \M^N \to (-\infty,+\infty]$ be the function defined by \eqref{F} such that $\intr(\dom F)\cap \dom\iota_{\mathbf{D}} \neq \emptyset$ and  ${R}_{\lambda F} {R}_{\lambda\iota_{\mathbf{D}}}$ is nonexpansive such that $\fix({R}_{\lambda F} {R}_{\lambda\iota_{\mathbf{D}}})\neq \emptyset$. Let $\{\mathbf{x}_n\}$ be the sequence generated by 
	\begin{equation}\label{ALGeq22}
		\begin{cases}
			\mathbf{y}_n= \gamma(\mathbf{x}_n,{R}_{\lambda F} {R}_{\lambda\iota_{\mathbf{D}}}(\mathbf{x}_n);\alpha_n),\\
			\mathbf{x}_{n+1} =({R}_{\lambda F} {R}_{\lambda\iota_{\mathbf{D}}})^p(\mathbf{y}_n)   \text{ for all }  n \in \N,
		\end{cases}
	\end{equation}
	where $\{\alpha_n\}$ is a sequence in $(0,1)$ satisfying the condition $\sum_{n=1}^\infty \alpha_n(1-\alpha_n)=\infty$. Then $\{\mathbf{x}_n\}$ converges to an element  $\mathbf{v}\in \fix({R}_{\lambda F} {R}_{\lambda\iota_{\mathbf{D}}})$ such that  $\prox_{\lambda\iota_{\mathbf{D}}}(\mathbf{v})$  is solution of problem \eqref{Pbm3}.
\end{Corollary}
\begin{proof}
	Since $T=R_{\lambda F}R_{\lambda\iota_{\mathbf{D}}}$ is nonexpansive. From Theorem \ref{CorIn-M}, sequence $\{\mathbf{x}_n\}$ generated by \eqref{ALGeq22} converges to an element $\mathbf{v}\in \fix({R}_{\lambda F} {R}_{\lambda\iota_{\mathbf{D}}})$. From Theorem \ref{thmequiv}, we have $\mathbf{u}=\prox_{\lambda\iota_{\mathbf{D}}}(\mathbf{v})$ is solution of problem \eqref{Pbm3}.
\end{proof}

We refer to the method defined by \eqref{ALGeq2'} as the inertial parallel Douglas-Rachford method, and the method defined by \eqref{ALGeq22} as the $p$-accelerated normal S-parallel Douglas-Rachford method.

\subsection{The generalized Heron problem in Hadamard manifolds}

Heron from Alexandria (10-75 AD) was ``a Greek geometer and inventor whose writings preserved for posterity a knowledge of the mathematics and engineering of Babylonia, ancient Egypt, and the Greco-Roman world'' (from the Encyclopedia Britannica). One of the geometric problems he proposed in his Catroptica was as follows: find a point on a straight line in the plane such that the sum of the distances from it to two given points is minimal.

In 2012, Mordukhovich et al. \cite{Mordukhovich2012}  generalized the Heron problem.
In the examination of the generalized Heron problem, as discussed in reference \cite{Mordukhovich2012, Mordukhovich2012(2), Bot2015, Dixit2022, ShikherOpt2024, ReichFT2023}, the conventional notions of points and straight lines are substituted with nonempty, closed  and convex sets. Within this context, the objective is to locate a point within a predetermined nonempty, closed and convex set $C\subset \R^m$ in such a way that the combined distance to specified nonempty, closed  and convex sets $C_k\subset \R^m, k\in \varLambda_{N}$ is minimized. Mathematically, we can write
\begin{equation}\label{Heronpbm}
	\text{minimize } \sum_{k\in \varLambda_{N}}d(x,C_k) \text{ subject to } x\in C \subset \R^m.
\end{equation}

Now, we consider the generalized Heron problem in the setting of Hadamard manifold $\M$, where the objective is to locate a point within a predetermined nonempty, closed and geodesically convex set $C\subset \M$ in such a way that the combined distance to specified nonempty, closed and geodesically convex sets $C_k\subset \M, k\in \varLambda_{N}$ is minimized. Mathematically, the generalized Heron problem can be formulated as:
\begin{equation}\label{HeronpbmManifold}
	\text{minimize } \sum_{k\in \varLambda_{N}}d(x,C_k) \text{ subject to } x\in C\subset \M.
\end{equation}	
The constrained problem \eqref{Heronpbm} can be formulated into the following unconstrained problem:
\begin{equation}\label{Heronunpbm}
	\min_{x\in \M} \sum_{k\in \varLambda_{N}}\phi_k(x)+\iota_{C}(x),
\end{equation}
where $\phi_k(x)=d(x,C_k)$ for all $k\in \varLambda_{N}$.
In the view of \eqref{Pbm3}, problem \eqref{Heronunpbm} can be written as
\begin{equation} \label{Pbm4}
	\min_{\mathbf{x}\in\M^{N+1}} F(\mathbf{x})+\iota_{\mathbf{D}}(\mathbf{x}) ,
\end{equation}
where $F(\mathbf{x})=\sum_{k\in \varLambda_N} d(x_k,C_k)+\iota_C(x_{N+1})$ and 
$\mathbf{D}$=$\{\mathbf{x}\in\M^{N+1} \colon x_1=\dots=x_{N+1}\}$.

\begin{Remark}
\tr{By \eqref{CompRefl}, the reflection of $F$ defined in \eqref{Pbm4} is given by
\[
R_{\lambda F}(\mathbf{x})
=\bigl(R_{\lambda d(x_1,C_1)}(x_1),\ldots,
R_{\lambda d(x_N,C_N)}(x_N),
R_{\lambda \iota_C}(x_{N+1})\bigr)
\quad \text{for all } \mathbf{x}\in \mathcal{M}^{N+1}.
\]
Since $\mathcal{M}^{N+1}$ has constant sectional curvature, it follows from
Proposition~\ref{kappapr2} and Theorem~\ref{distNE} that the reflection
$R_{\lambda F}$ is nonexpansive. Similarly, by Proposition~\ref{kappapr2},
the reflection $R_{\lambda \iota_{\mathbf{D}}}$ is nonexpansive, since
$\mathbf{D}$ is a nonempty, closed, and geodesically convex set.}
\end{Remark}

\begin{Example} 
	Let $\M=\R^m_{++}=\{x=(x_1,\ldots , x_m)\in \R^m\colon  x_i>0, i=1,\ldots,m\}$. 
	For $x\in \M$, let 
	\begin{equation*} 
		G(x)=\operatorname{diag}(1/x_1^2, \ldots,1/x_m^2)
		\text{ and } H(x)=\operatorname{diag}(x_1, \ldots,x_m)
	\end{equation*} 
	be two diagonal matrices.  Then  $\M$ is a Riemannian manifold with the
	Riemannian metric  defined by 
	\begin{equation}\label{Riemmetric}
		\langle u,v \rangle_x =\langle u,G(x)v\rangle \text{ for } x\in \M \text{ and } u,v \in \T_x\M=\R^m.
	\end{equation}
	The Riemannian distance $d\colon  \M\times \M\to [0,\infty)$ is given by 
	$$d(x,y)= \left(\sum_{i=1}^m\left(\ln \left(\frac{x_i}{y_i}\right)\right)^2\right)^{1/2}  \text{ for all }   x=(x_1, \ldots, x_m), y=(y_1, \ldots, y_m) \in  \M.$$
	$\M$ is a Hadamard manifold with zero sectional curvature (see \cite{DaCruz}). For $x=(x_1, \ldots, x_m),y=(y_1, \ldots, y_m) \in \M$, the geodesic joining $x$ to $y$ is given by $$\gamma(x,y;t)=(x_1^{1-t}y_1^t, \ldots, x_m^{1-t}y_m^t)  \text{ for all }  t\in [0,1];$$
	the exponential map  $\exp_x \colon \T_x\M\to \M$ is given by 
	$$\exp_x (v) =\left(x_1 e^{(v_1/x_1)},\ldots, x_m e^{(v_m/x_m)}\right)  \text{ for all }  v=(v_1,\ldots,v_m) \in \T_x\M;$$
	the  inverse of the exponential map $\exp_x^{-1}\colon \M\to \T_x\M$ is given by 
	\begin{equation*} 
		\exp^{-1}_x y = \left(x_1 \ln \left({y_1}/{x_1}\right), \ldots, x_m \ln \left({y_m}/{x_m}\right)\right)  \text{ for all }  y=(y_1, \ldots, y_m) \in \M.
	\end{equation*}
\end{Example}

We denote the closed ball centered at $c\in \M$ with radius $r$ by $B_r[c]$, i.e.,
$B_r[c]=\{x\in \M: d(x,c)\leq r\}.$

\begin{Remark} \label{RemarkProxb}
	For the Hadamard manifold $\M=\R_{++}^m$ with the Riemannian metric defined by \eqref{Riemmetric}, we have the following:
	\begin{enumerate}
		\item[(a)] $\prox_{\lambda\iota_{\mathbf{D}}}(\mathbf{x})=P_{\mathbf{D}}(\mathbf{x})=\left(b,\ldots,b\right)\in \M^{N}$ for all $\mathbf{x}=(x_1,\ldots,x_{N})\in \M^N$, where $x_k=(x_{k,1},x_{k,2},\ldots,x_{k,m})$, $k\in \varLambda_{N}$ and $b=\left(\left(\prod_{k\in \varLambda_{N}}x_{k,1}\right)^\frac{1}{N},\ldots,\left(\prod_{k\in \varLambda_{N}}x_{k,m}\right)^\frac{1}{N}\right)$.
		\item[(b)] $P_{B_r[c]}(x)=\begin{cases}
			\gamma\left(c,x;\frac{r}{d(c,x)}\right), & \text{ if } d(c,x)>r,\\
			x, & \text{ if } d(c,x)\leq r.
		\end{cases}$
		\item[(c)] $\prox_{\lambda d(x,c)}(x)=\begin{cases}
			\gamma(x,c;\theta), &\text{ if } d(x,c)>\lambda,\\
			c, &\text{ if } d(x,c)\leq \lambda,
		\end{cases}$  where $\theta=\frac{\lambda}{d(x,c)}$.
		\item[(d)] $\prox_{\lambda d(x,B_r[c])}(x)=\begin{cases}
			\gamma(x,P_{B_r[c]}(x);\theta), &\text{ if } d(x,B_r[c])>\lambda,\\
			P_{B_r[c]}(x), &\text{ if } d(x,B_r[c])\leq \lambda,
		\end{cases}$  where $\theta=\frac{\lambda}{d(x,B_r[c])}$.
	\end{enumerate} 
\end{Remark}

For the numerical experiment, we first consider the generalized Heron problem for a ball with point constraints in $\M=\R_{++}^m$ with the Riemannian metric defined by \eqref{Riemmetric}, i.e.,
\begin{equation}\label{DiluteHeron}
	\text{minimize } \sum_{k\in \varLambda_{N}}d(x,a_k) \text{ subject to } x\in C\subset \M,
\end{equation}
where $a_k\in \M, k\in \varLambda_{N}$. 

We compute the solution of the generalized Heron problem using parallel Douglas-Rachford method \cite[Algorithm 4]{Bergmann2016}, inertial parallel Douglas-Rachford method \eqref{ALGeq22} and $p$-accelerated normal S-parallel Douglas-Rachford  method \eqref{ALGeq2'} and compare based on their efficiency. 

By using Remark \ref{RemarkProxb}(a), we may assume that $\prox_{\lambda\iota_{\mathbf{D}}}(\mathbf{x}_n)=(t_n,\ldots,t_n)\in \M^{N+1}$. We then define the error term by \(Er(n)=d(t_n,t_{n-1})\). In all experiments, the algorithm is terminated when the stopping criterion \(Er(n)\leq 10^{-10}\) is satisfied.

We take $\lambda=1$ and iteration  parameters for these algorithms  are listed below:
\begin{eqnarray*}
	\text{\cite[Algorithm 4]{Bergmann2016}} \text{[Alg(PDRA)]} &:& \alpha_n=0.4;\\
	\text{Algorithm \eqref{ALGeq22}} \text{[Alg(In-M)]} &:& \alpha_n=0.4, \theta_{n}=0.08;\\
	\text{Algorithm \eqref{ALGeq2'}} \text{[Alg($p$-Acc)]} &:& \alpha_n=0.4.	
\end{eqnarray*}
\tr{Note that we use the same parameter $\alpha_n$ for all the algorithms. However, the inertial parameter $\theta_n$ is chosen according to condition (C3), that is, $\epsilon = (1-\alpha_n)/\alpha_n$, $\epsilon_1 = 1 + \epsilon + \max\{1,\epsilon\}$, and $\theta_n \approx 0.25(\epsilon/\epsilon_1)$. We chose $p = 2$, which is the smallest value that yielded the best performance in our experiments. The parameters were chosen by testing several reasonable values, and those giving the best convergence are reported.}

\begin{Example} \label{DiscConsEx}
	Consider the generalized Heron problem \eqref{DiluteHeron}, where $m=2,N=2$,  the constraint set $C=B_{r}[c]$ is a ball in $\M$ centered at $c=(35,35)$ with radius $r=0.4$. Consider two cases:\\
	\textbf{Case 1:} Let $a_1=(15,70)$ and $a_2=(70,15)$.\\
	\textbf{Case 2:} Let $a_1=(5,50)$ and $a_2=(50,5)$.\\
	Note that in both cases, $C$ does not contain $a_1$ and $a_2$. Now we have two observations:
	
	\begin{enumerate}
		\item[(i)] In Case 1, the geodesic segment $\gamma(a_1,a_2; \cdot)$ intersects the ball $C$; see Figure \ref{fig:intersect}. Then any point in the intersection is a solution of problem \eqref{DiluteHeron}. In this case, problem \eqref{DiluteHeron} has infinitely many solutions.
		
		\item[(ii)] In Case 2, the geodesic segment $\gamma(a_1, a_2; \cdot)$ does not intersect the ball $C$; see Figure \ref{fig:notintersect}. Then, there is a unique point on the boundary of the ball $C$.
	\end{enumerate}

\end{Example}

\begin{figure}[htbp]
	\centering
	\begin{subfigure}[b]{0.45\textwidth}
		\includegraphics[width=\textwidth]{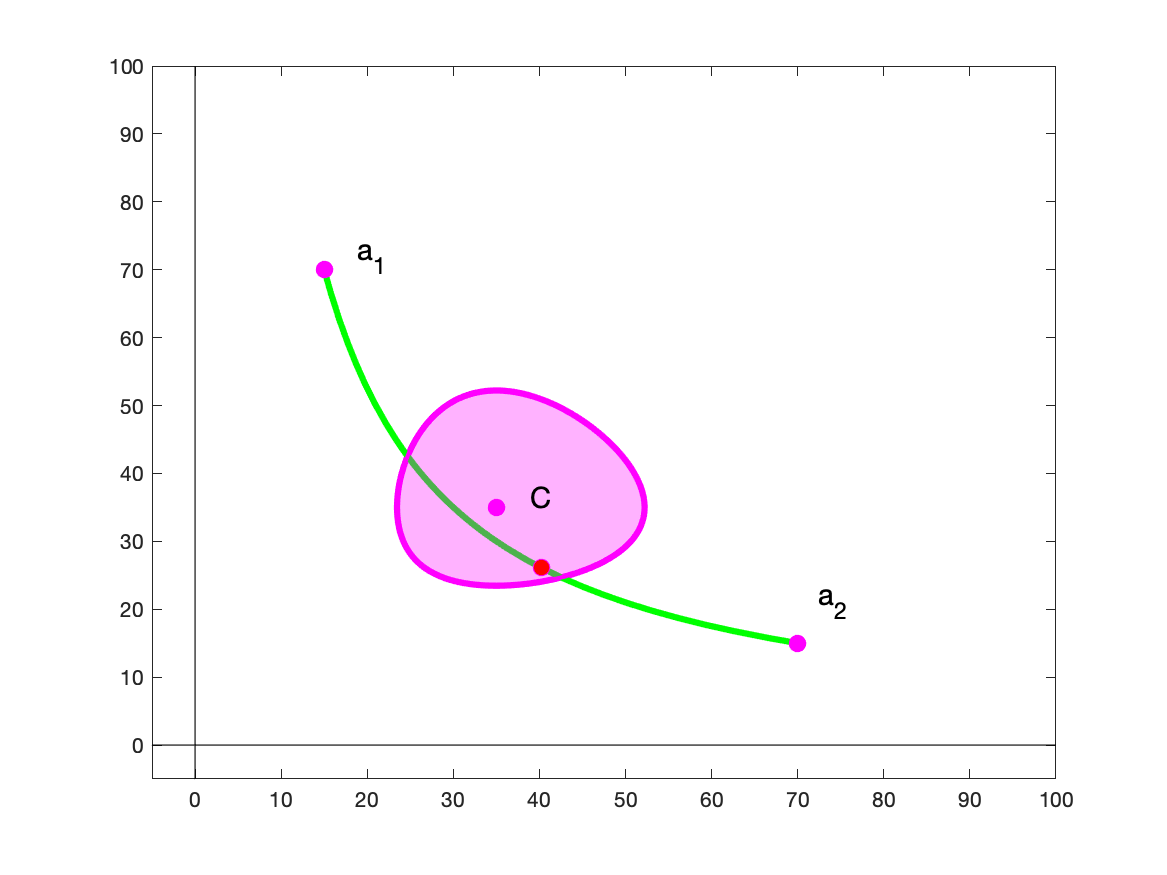}
		\caption{Intersects}
		\label{fig:intersect}
	\end{subfigure}
	\hfill
	\begin{subfigure}[b]{0.45\textwidth}
		\includegraphics[width=\textwidth]{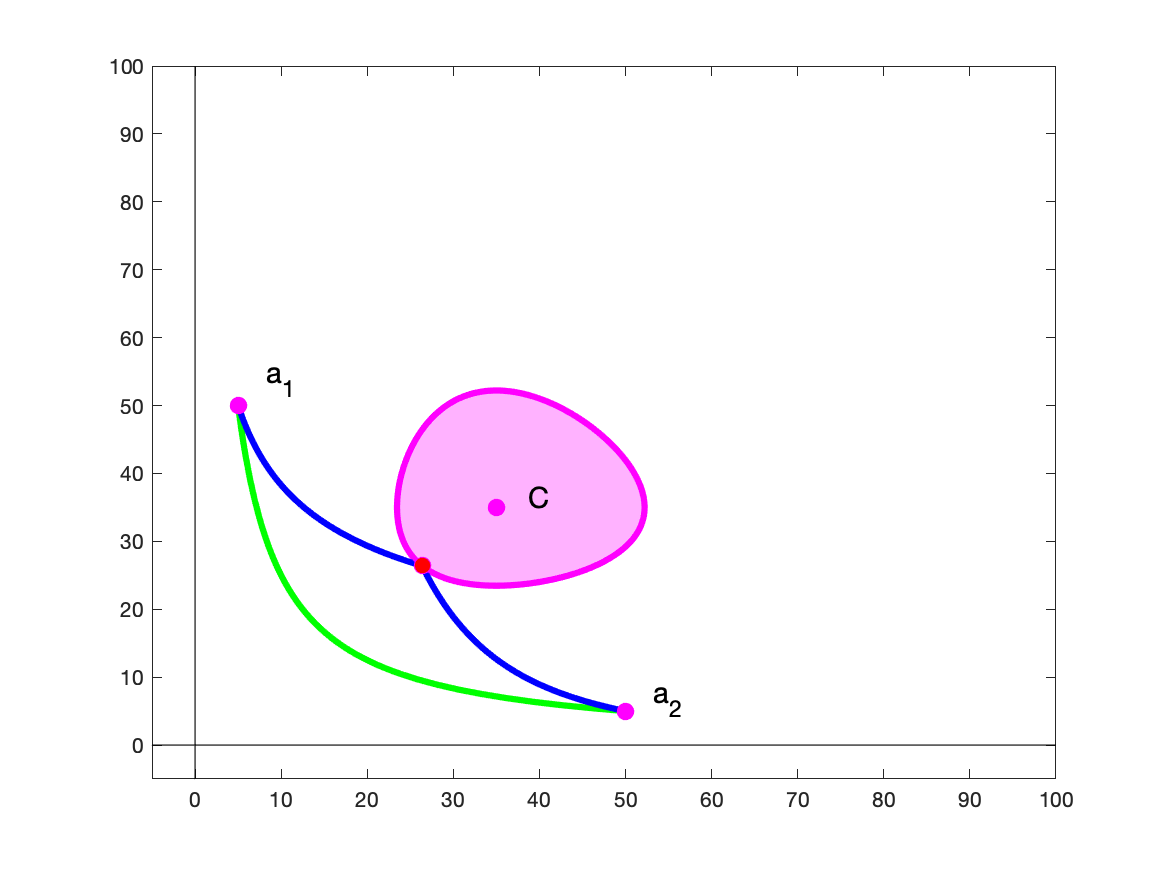}
		\caption{Does not intersect}
		\label{fig:notintersect}
	\end{subfigure}
	\caption{Generalized Heron problems for two points with a ball constraint.}
	\label{fig:discCons}
\end{figure}

We take the initial point $\mathbf{x}_0=ones(1,6)$ for all the algorithms and $\mathbf{x}_1=2*ones(1,6)$ for Alg(In-M) for both cases (Case 1 and Case 2) in Example \ref{DiscConsEx}. The numerical performance of algorithms Alg(PDRA), Alg(In-M) and Alg($p$-Acc) are depicted in Table \ref{compareDisctab}. We plot the relationship between the number of iterations and \(Er(n)\); see Figure~\ref{compareDisc}. From Table~\ref{compareDisctab} and Figure~\ref{compareDisc}, we observe that Alg(\(p\)-Acc) requires less computational time and fewer iterations to achieve the desired stopping criterion in both cases.

\begin{table}[htbp]
	\begin{center}
		\begin{minipage}{\textwidth}
			\caption{Numerical performance in terms of CPU time and number of iterations for Alg(PDRA), Alg(In-M), and Alg($p$-Acc)  for Example \ref{DiscConsEx}}
			\label{compareDisctab}
			\begin{tabular*}{\textwidth}{@{\extracolsep{\fill}}lcccccc@{\extracolsep{\fill}}}
				\hline
				&&Case 1&&&Case 2&\\
				\hline%
				Algorithm & CPU(s) & iter(n) & $Er(n)$ &CPU(s) & iter(n) & $Er(n)$\\
				\hline
				Alg(PDRA) & $0.029219$ & $75$ & $7.4085*10^{-11}$&      $0.029682$ & $99$ & $9.7691*10^{-11}$\\
				Alg(In-M) & $0.015613$ & $46$ & $9.7055*10^{-11}$ & $0.022472$ & $66$ & $7.3605*10^{-11}$\\
				Alg($p$-Acc)&  $0.012893$& $19$ & $6.3342*10^{-11}$ &  $0.012026$& $31$ & $3.6084*10^{-11}$\\
				\hline
			\end{tabular*}
			
		\end{minipage}
	\end{center}
\end{table}

\begin{figure}[htbp]
	\centering
	\begin{subfigure}[b]{0.48\textwidth}
		\includegraphics[width=\textwidth]{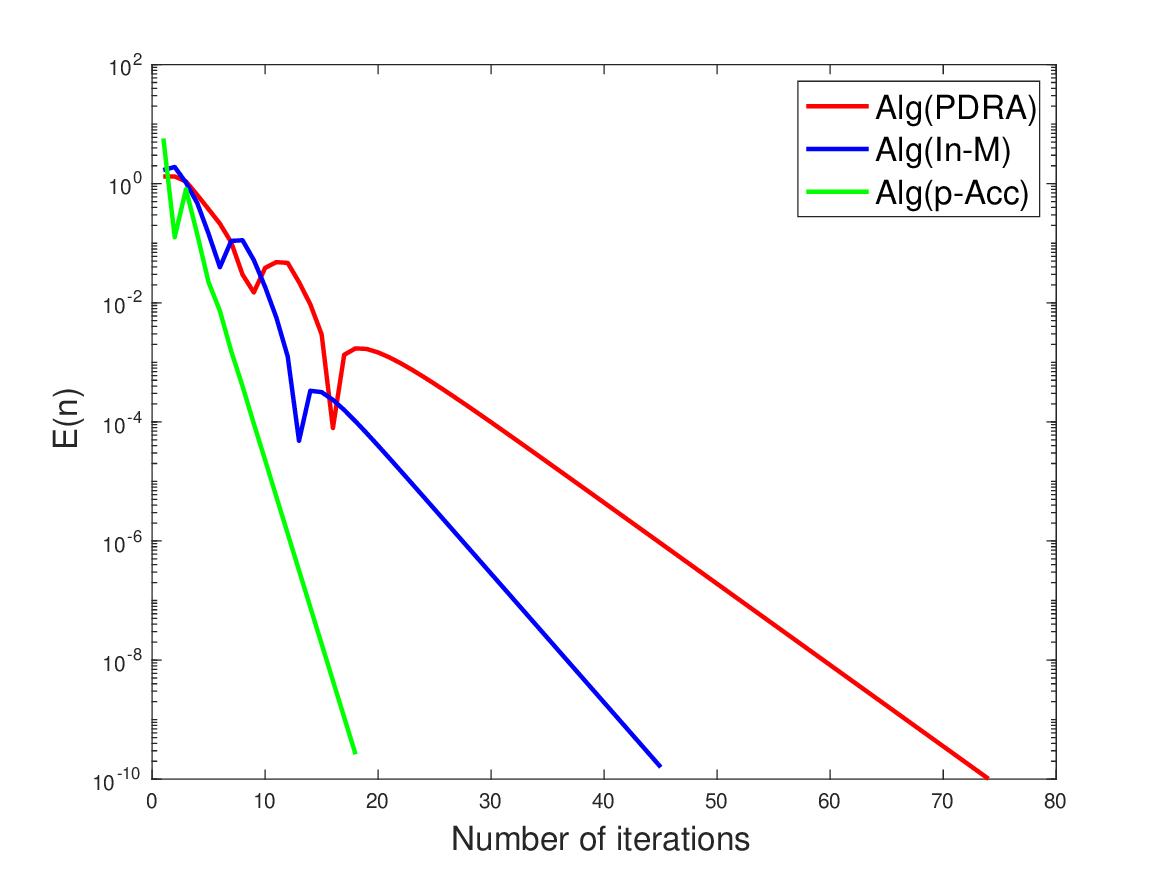}
		\caption{Case 1}
		\label{disc_case1}
	\end{subfigure}
	\hfill
	\begin{subfigure}[b]{0.48\textwidth}
		\includegraphics[width=\textwidth]{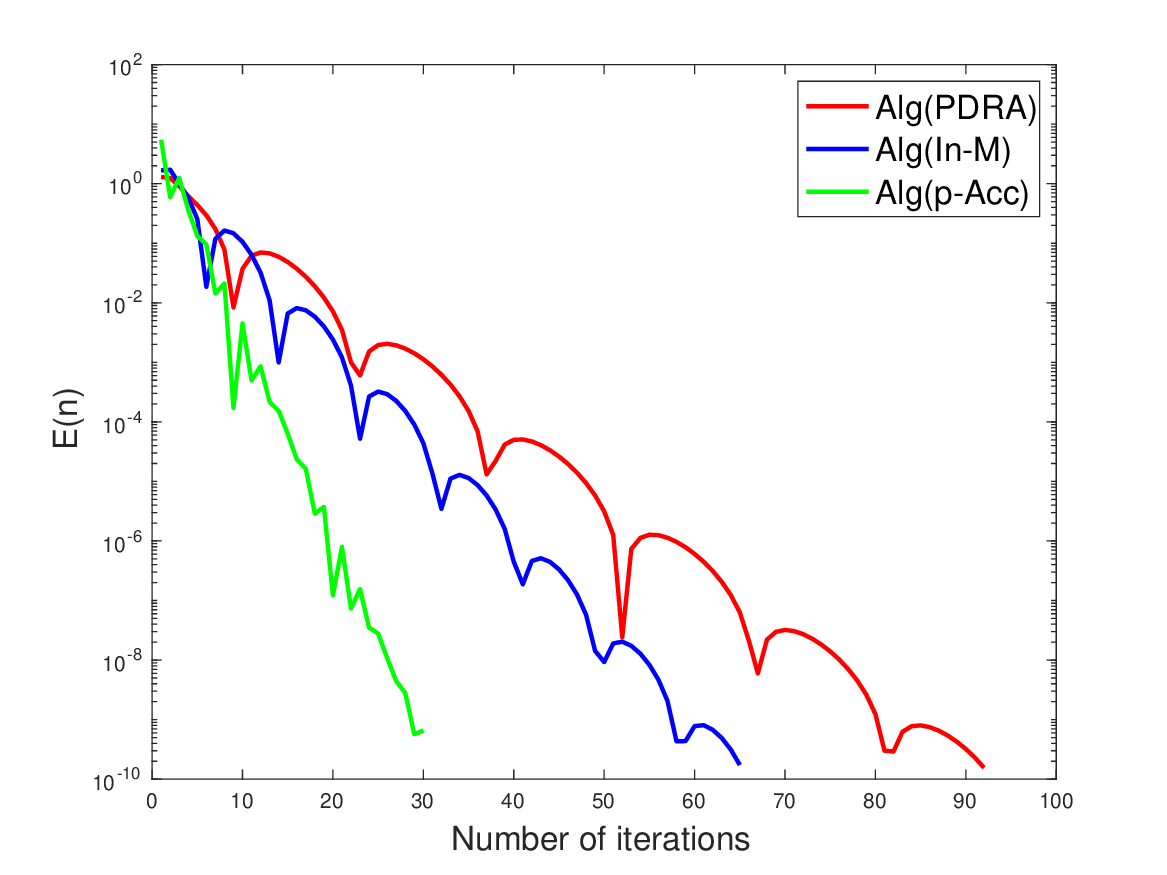}
		\caption{Case 2}
		\label{fig:disc_case2}
	\end{subfigure}
	\caption{Comparison of Alg(PDRA), Alg(In-M), and Alg($p$-Acc) based on the relationship between the number of iterations and $Er(n)$ for Example~\ref{DiscConsEx}.}
	\label{compareDisc}
\end{figure}

\begin{Example} \label{GeneralExmp}
	Consider the generalized Heron problem \eqref{DiluteHeron} for the following two cases:\\
	\textbf{Case 1:} $m=2$, $N=4$.\\
	We take predetermined set $C$ as ball $B_r[c]$ with $c=(35,35)$  and $r=0.4$
	and choose constraints sets to be points
	$a_1=(15,15)$, $a_2=(65,65)$, $a_3 = (10,60)$  and  $a_4=(60,10)$;
	see Figure \ref{case1heron}.\\
	\textbf{Case 2:} $m=20$, $N=10$.\\
	 We take    predetermined set $C$ as ball $B_r[c]$ with 
	$$c=(41, 29, 61, 39, 121, 79, 99, 51, 81, 59, 111, 39, 79, 31, 61, 19, 121, 61, 81, 39) \text{ and } r=0.4$$
	and choose constraints sets to be points $a_k$, $k=1,2,\ldots,10$, where 
	
	\begin{align*}
		a_1&=(35, 30, 60, 40, 120, 80, 100, 50, 80, 60, 110, 40, 80, 30, 60, 20, 120, 60, 80, 40),\\
		a_2&=(45, 30, 60, 40, 120, 80, 100, 50, 80, 60, 110, 40, 80, 30, 60, 20, 120, 60, 80, 40),\\
		a_3&=(35, 30, 55, 40, 120, 80, 100, 50, 80, 60, 110, 40, 80, 30, 60, 20, 120, 60, 80, 40),\\
		a_4&=(35, 30, 65, 40, 120, 80, 100, 50, 80, 60, 110, 40, 80, 30, 60, 20, 120, 60, 80, 40),\\
		a_5&=(35, 30, 60, 40, 115, 80, 100, 50, 80, 60, 110, 40, 80, 30, 60, 20, 120, 60, 80, 40),\\
		a_6&=(35, 30, 65, 40, 125, 80, 100, 50, 80, 60, 110, 40, 80, 30, 60, 20, 120, 60, 80, 40),\\
		a_7&=(35, 30, 65, 40, 120, 80, 95, 50, 80, 60, 110, 40, 80, 30, 60, 20, 120, 60, 80, 40),\\
		a_8&=(35, 30, 65, 40, 120, 80, 105, 50, 80, 60, 110, 40, 80, 30, 60, 20, 120, 60, 80, 40),\\
		a_9&=(35, 30, 65, 40, 120, 80, 100, 50, 75, 60, 110, 40, 80, 30, 60, 20, 120, 60, 80, 40),\\
		a_{10}&=(35, 30, 65, 40, 120, 80, 100, 50, 85, 60, 110, 40, 80, 30, 60, 20, 120, 60, 80, 40).
	\end{align*}
\end{Example}
	
\begin{figure}
	\centering
	\includegraphics[width=0.6\linewidth]{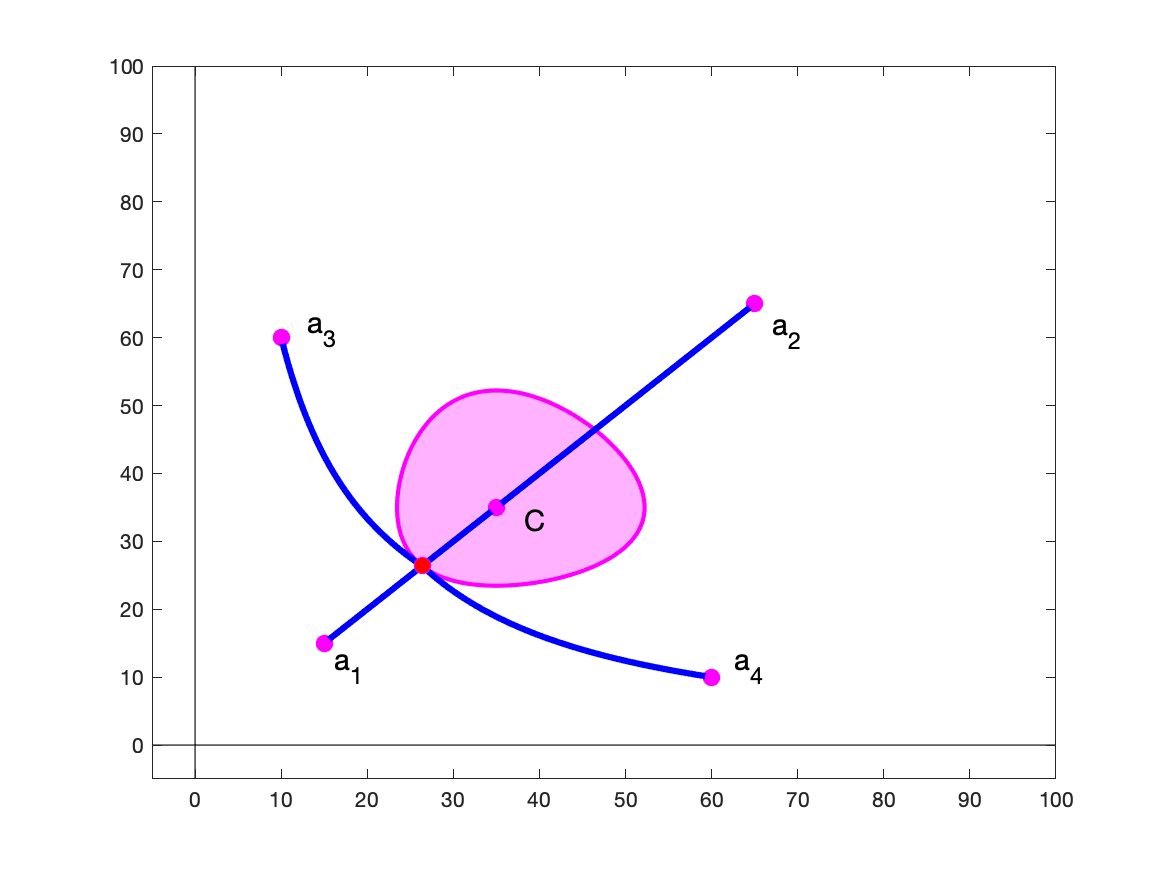}
	\caption{The generalized Heron problem for four points with a ball constraint}
	\label{case1heron}
\end{figure}

We take the initial point $\mathbf{x}_0=ones(1,m*(N+1))$ for all the algorithms and $\mathbf{x}_1=2*ones(1,m*(N+1))$ for Alg(In-M) for both cases (Case 1 and Case 2) in Example \ref{GeneralExmp}.
The numerical performance of algorithms Alg(PDRA), Alg(In-M) and Alg($p$-Acc) are depicted in Table \ref{Entab} for both cases (Case 1 and Case 2). We plot the relationship between the number of iterations and \(Er(n)\) to have a clear visualization of the experiment; see Figure~\ref{compareEn}. 
From Table \ref{Entab} and  Figure \ref{compareEn}, we see that our Algorithm Alg($p$-Acc) requires less computational time and less number of iterations to achieve the desired stopping criterion in both cases.
From Figure \ref{comparefffff}, we can see the convergence behaviour of algorithms Alg(PDRA), Alg(In-M) and Alg($p$-Acc)  to achieving the desired goal as number of iteration increases upto $30$th iteration for Case 2.

\begin{table}
	\begin{center}
		\begin{minipage}{\textwidth}
			\caption{Numerical performance in terms of CPU time and number of iterations for Alg(PDRA), Alg(In-M), and Alg($p$-Acc)  for Example \ref{GeneralExmp}}
			\label{Entab}
			\begin{tabular*}{\textwidth}{@{\extracolsep{\fill}}lcccccc@{\extracolsep{\fill}}}
				\hline
				&&Case 1&&&Case 2&\\
				\hline%
				Algorithm & CPU(s) & iter(n) & $Er(n)$ &CPU(s) & iter(n) & $Er(n)$\\
				\hline
				Alg(PDRA) & $0.029219$ & $113$ & $5.2545*10^{-11}$&      $0.029682$ & $192$ & $9.3761*10^{-11}$\\
				Alg(In-M) & $0.015613$ & $83$ & $4.4691*10^{-11}$ & $0.022472$ & $135$ & $9.0710*10^{-11}$\\
				Alg($p$-Acc)&  $0.012893$& $34$ & $6.0025*10^{-12}$ &  $0.012026$& $62$ & $4.3786*10^{-11}$\\
				\hline
			\end{tabular*}

		\end{minipage}
	\end{center}
\end{table}

\begin{figure}[htbp]
	\centering
	\begin{subfigure}[b]{0.48\textwidth}
		\includegraphics[width=\textwidth]{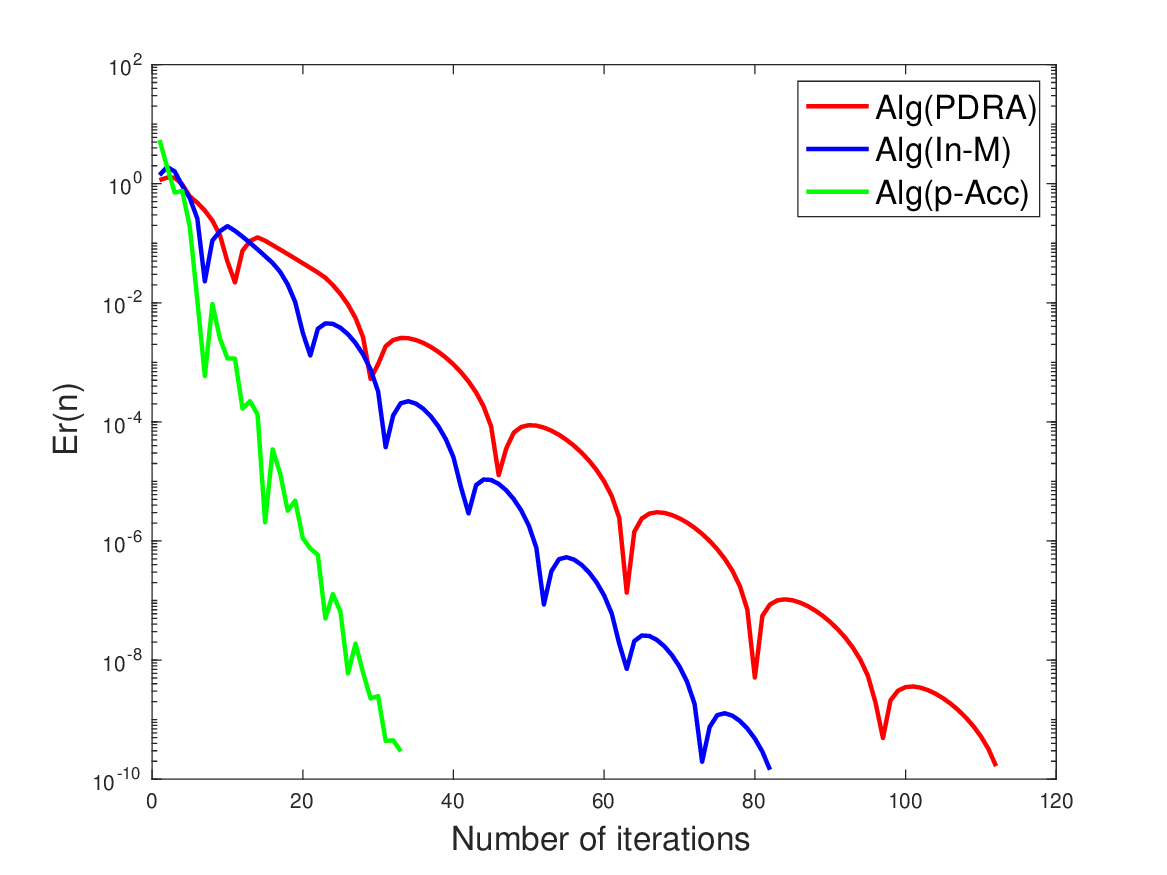}
		\caption{Case 1}
	\end{subfigure}
	\hfill
	\begin{subfigure}[b]{0.48\textwidth}
		\includegraphics[width=\textwidth]{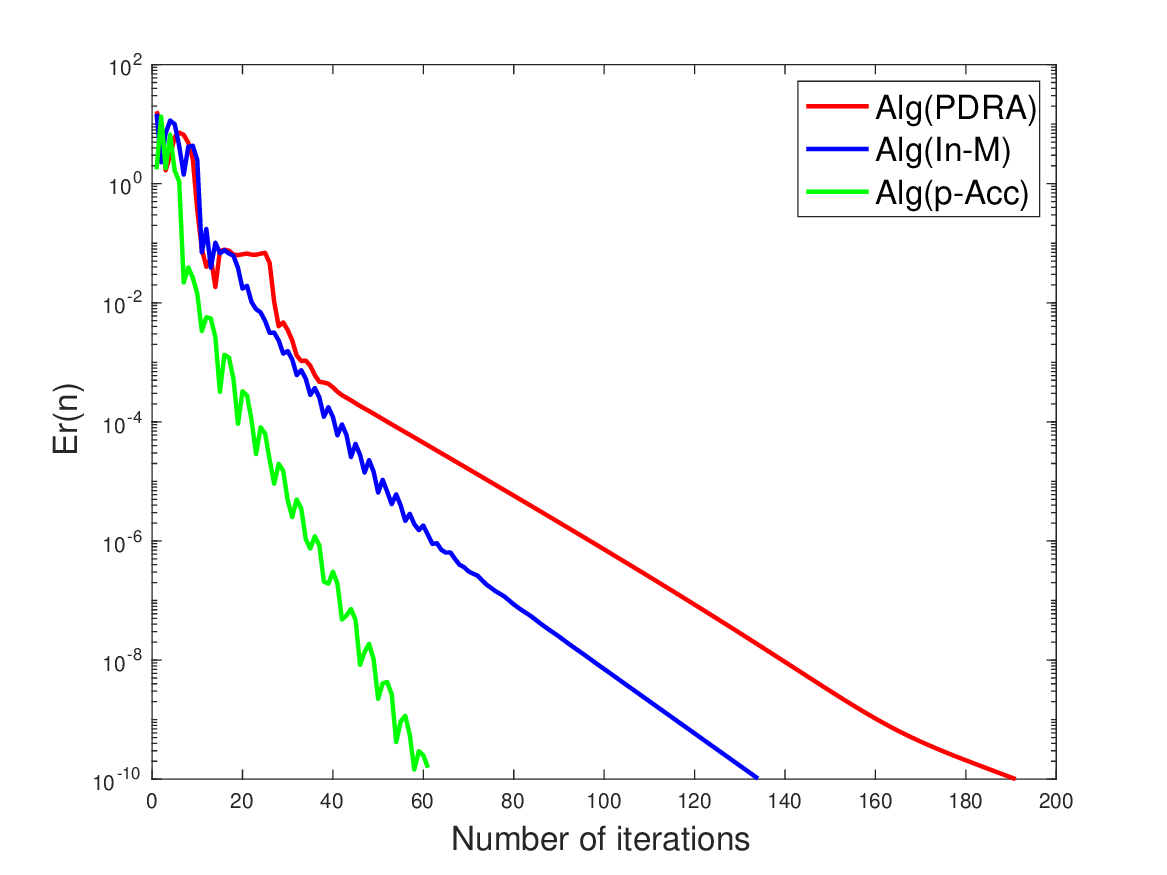}
		\caption{Case 2}
	\end{subfigure}
	\caption{Comparison of Alg(PDRA), Alg(In-M), and Alg($p$-Acc) based on the relationship between the number of iterations and $Er(n)$ for Example~\ref{GeneralExmp}.}
	\label{compareEn}
\end{figure}

\begin{figure}[htbp]
	\centering
	\begin{subfigure}[b]{0.45\textwidth}
		\includegraphics[width=\textwidth]{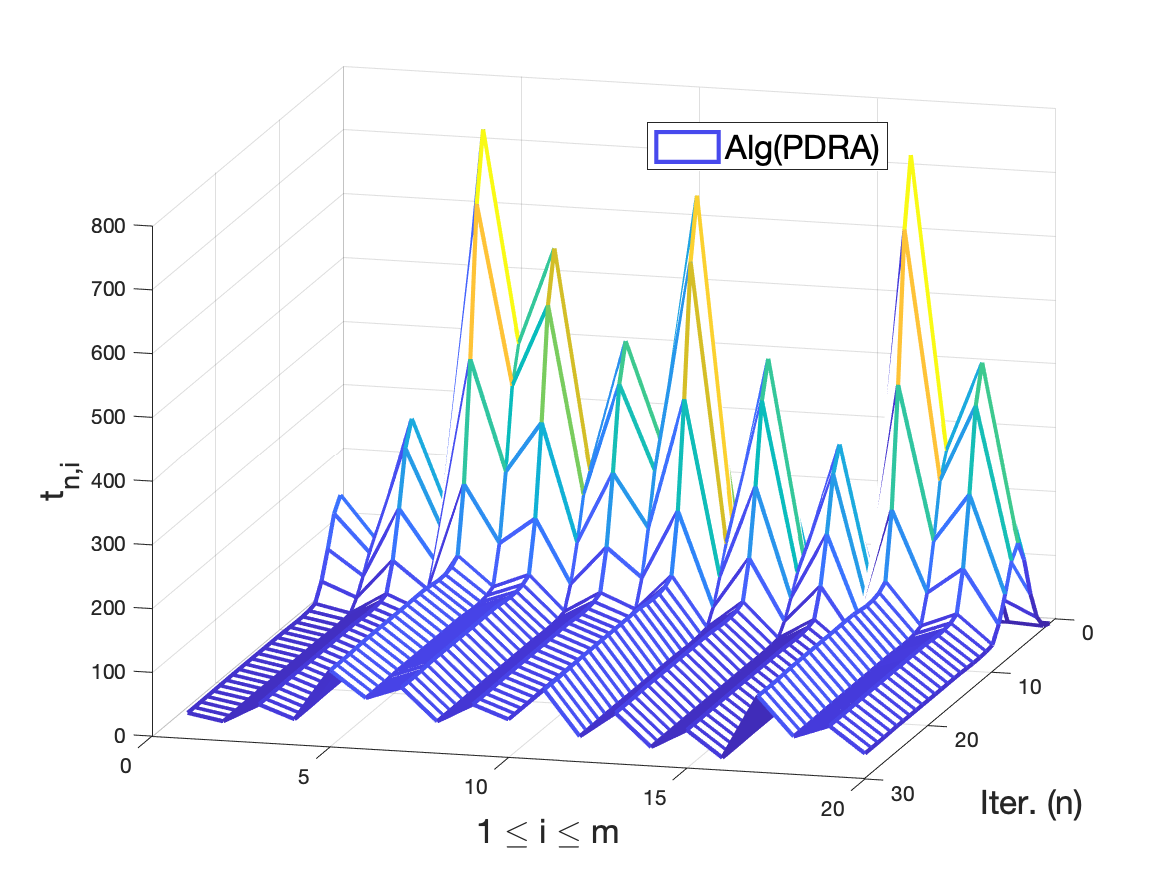}
		\caption{Alg(PDRA)}
	\end{subfigure}
	\hfill
	\begin{subfigure}[b]{0.45\textwidth}
		\includegraphics[width=\textwidth]{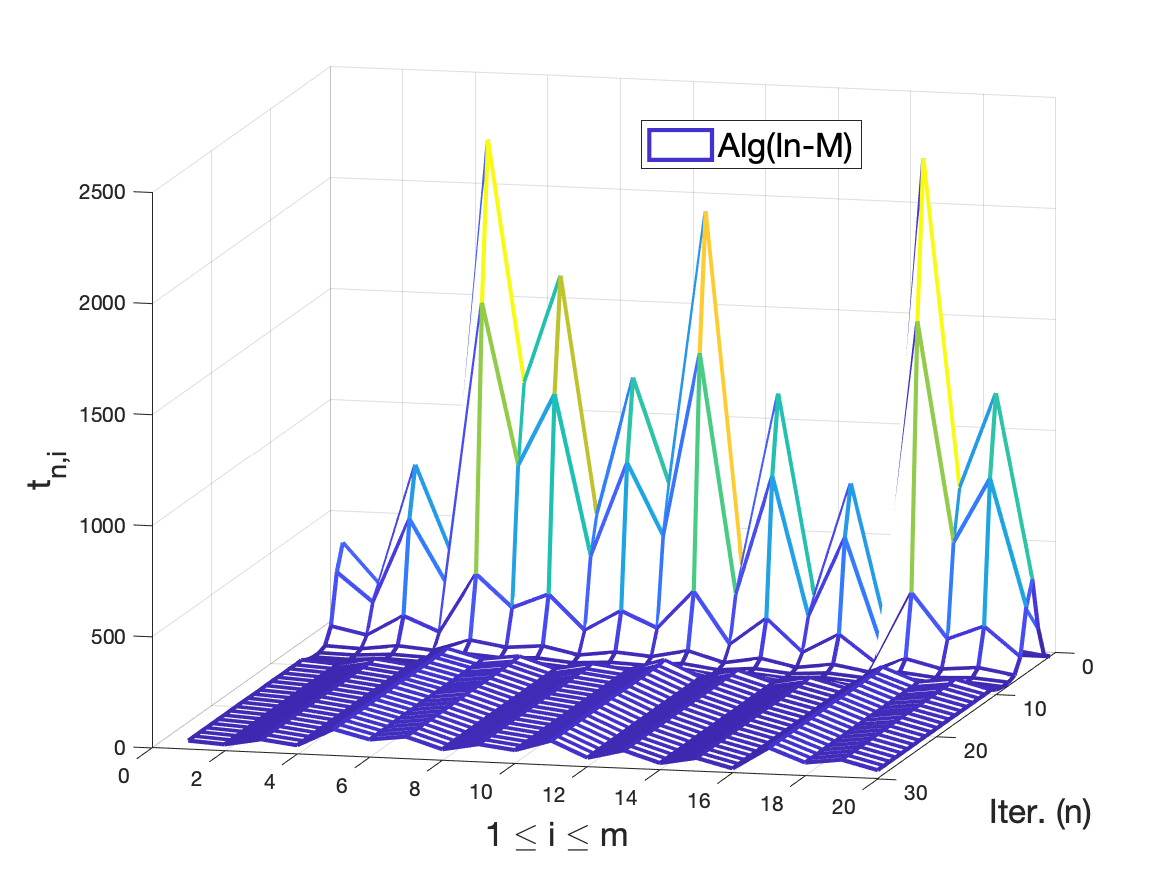}
		\caption{Alg(In-M)}
	\end{subfigure}
	\hfill
	\begin{subfigure}[b]{0.45\textwidth}
		\includegraphics[width=\textwidth]{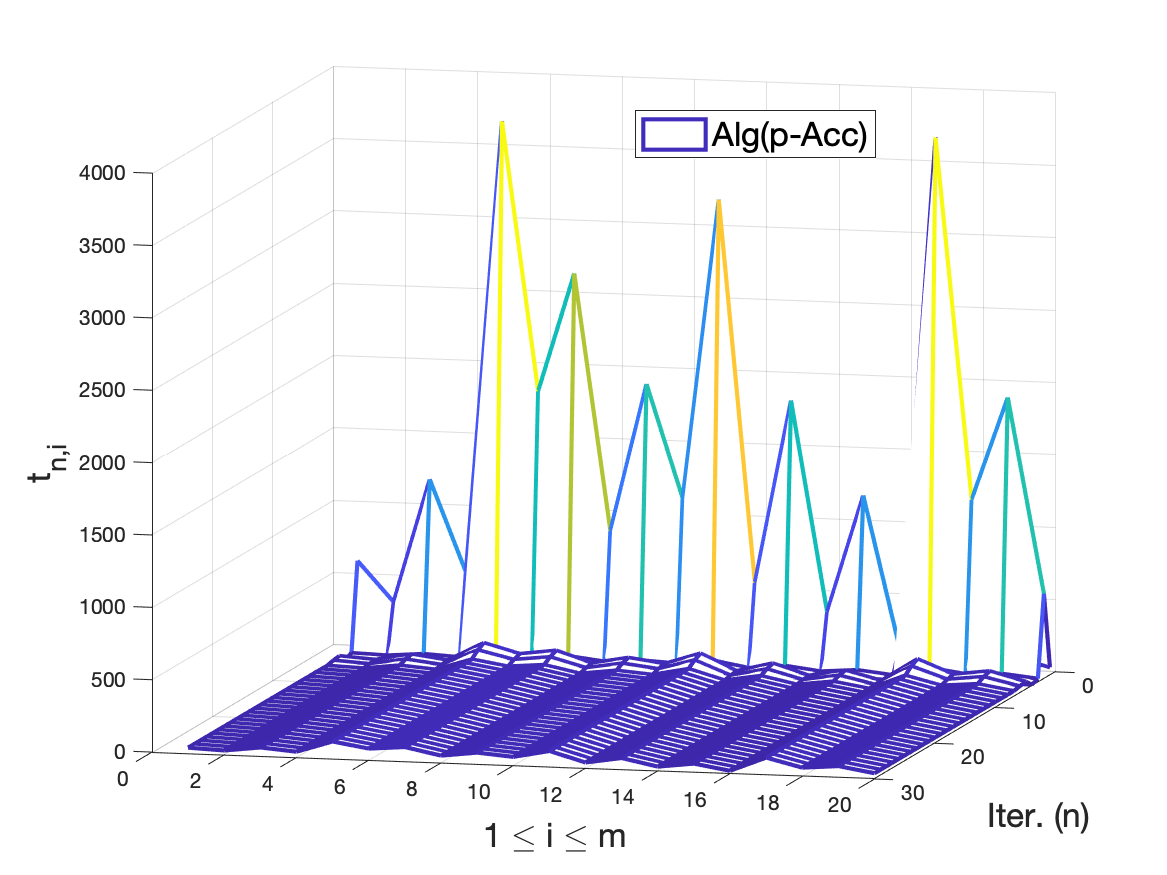}
		\caption{Alg($p$-Acc)}
	\end{subfigure}
	\caption{Graphical progression of Alg(PDRA), Alg(In-M), and Alg($p$-Acc) upto 30 iterations for Case 2 in Example \ref{GeneralExmp}.}
	\label{comparefffff}
\end{figure}

\begin{Example} \label{Ballwithballexmp}
	Consider the generalized Heron problem \eqref{HeronpbmManifold} for the following two cases: In the first case we consider balls with ball constraint in $\M=\R_{++}^2$ and in the second case balls with ball constraint in $\M=\R_{++}^3$ as follows:\\
	\textbf{Case 1:} $m=2$, $N=4$.\\
	 We take predetermined set $C$ as ball $B_r[c]$ with $c=(35,35)$  and  $r=0.4$
	and choose constraints sets to be balls $C_k=B_r[a_k]$, $k=1,2,3,4$ with the center
	$a_1=(15,15)$, $a_2=(65,65)$, $a_3 = (10,60)$  and  $a_4=(60,10)$ and $r=0.4$; see Figure \ref{ballcase2}.\\
	\textbf{Case 2:} $m=3$, $N=4$.\\
	 We take set $C$ as ball $B_r[c]$ with $c=(35,35,35)$  and $r=0.4$
	and choose constraints sets to be balls $C_k=B_r[a_k]$, $k=1,2,3,4$ with the center
	$a_1=(15,15,15)$, $a_2=(65,65,65)$, $a_3 = (10,60,10)$  and $a_4=(60,10,60)$ and $r=0.4$; see Figure \ref{ballcase3}.
\end{Example}

\begin{figure}[htbp]
	\centering
	\begin{subfigure}[b]{0.48\textwidth}
		\includegraphics[width=\textwidth]{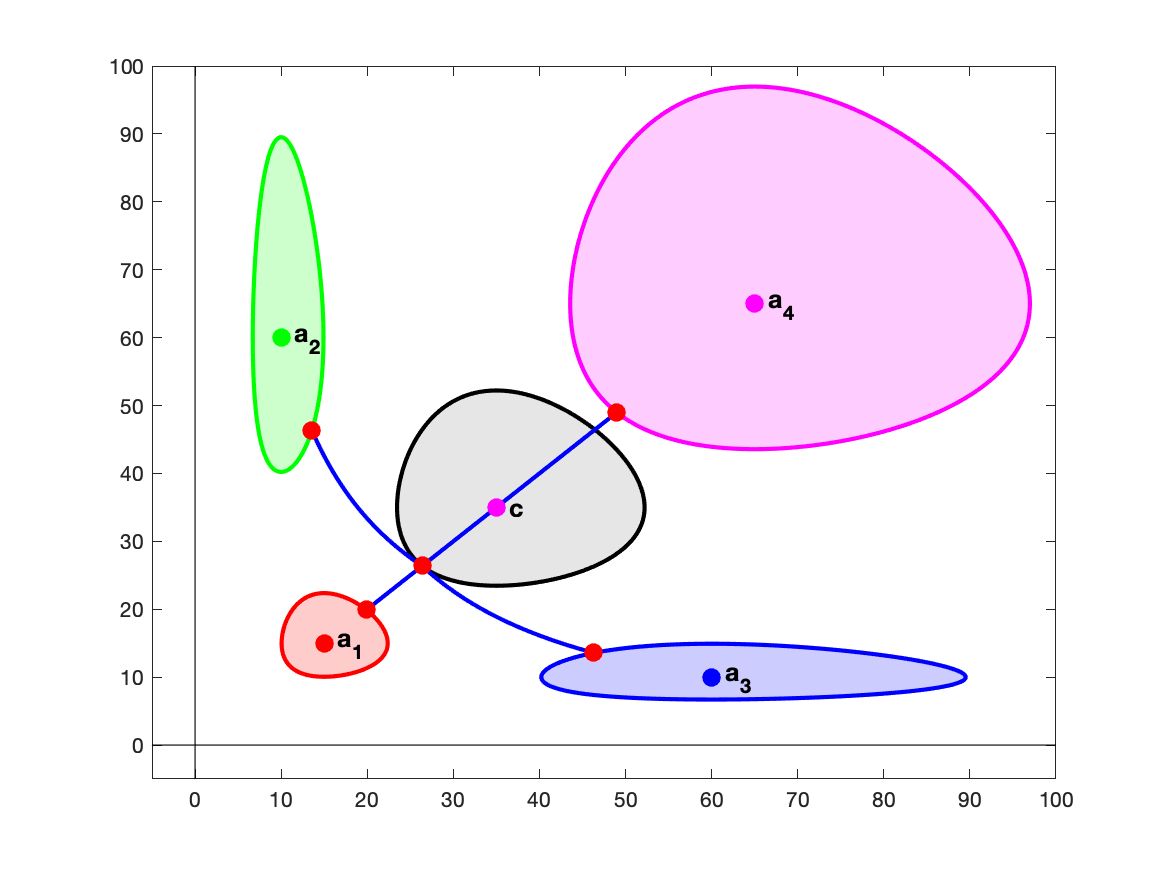}
		\caption{Case 1}
			\label{ballcase2}
	\end{subfigure}
	\hfill
	\begin{subfigure}[b]{0.48\textwidth}
		\includegraphics[width=\textwidth]{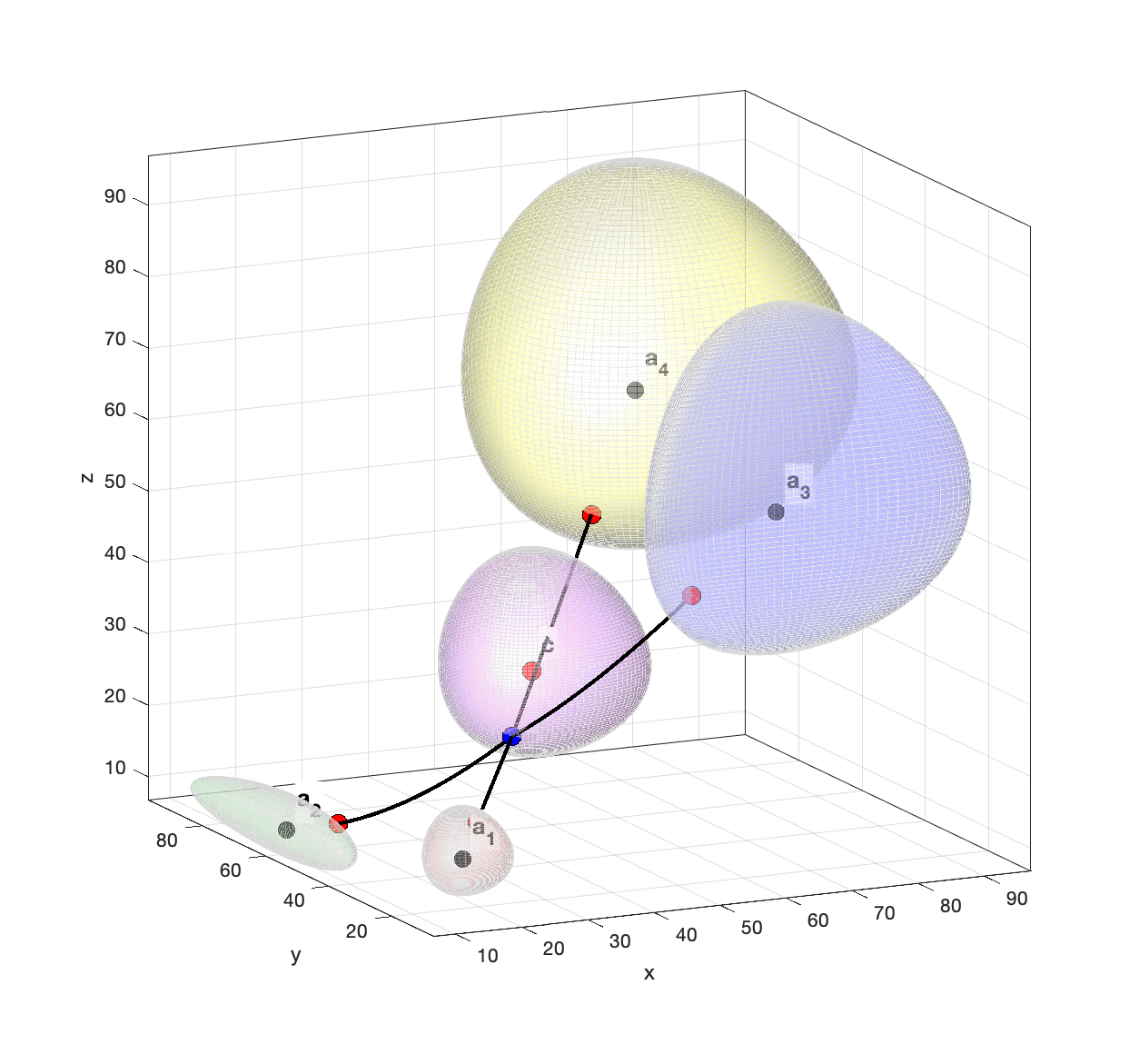}
		\caption{Case 2}
			\label{ballcase3}
	\end{subfigure}
	\caption{Generalized Heron problems for a ball with  ball constraints.}
	\label{discCons33}
\end{figure}

We take the initial point $\mathbf{x}_0=ones(1,m*(N+1))$ for all the algorithms and $\mathbf{x}_1=2*ones(1,m*(N+1))$ for Alg(In-M) for both cases (Case 1 and Case 2) in Example \ref{Ballwithballexmp}.
The numerical performance of algorithms Alg(PDRA), Alg(In-M) and Alg($p$-Acc) are depicted in Table \ref{Entab2} for both cases (Case 1 and Case 2). We plot the relationship between the number of iterations and \(Er(n)\) to have a clear visualization of the experiment; see Figure \ref{compareEn3}. From Table \ref{Entab2} and Figure \ref{compareEn3}, we see that our Algorithm Alg($p$-Acc) takes less runtime and less number of iterations to achieve the desired stopping criterion in both cases.

\begin{table}
	\begin{center}
		\begin{minipage}{\textwidth}
			\caption{Numerical performance in terms of CPU time and number of iterations for Alg(PDRA), Alg(In-M), and Alg($p$-Acc)  for Example \ref{Ballwithballexmp}}
			\label{Entab2}
			\begin{tabular*}{\textwidth}{@{\extracolsep{\fill}}lcccccc@{\extracolsep{\fill}}}
				\hline
				&&Case 1&&&Case 2&\\
				\hline%
				Algorithm & CPU(s) & iter(n) & $Er(n)$ &CPU(s) & iter(n) & $Er(n)$\\
				\hline
				Alg(PDRA) & $0.0190$ & $113$ & $8.9175*10^{-11}$&      $0.0214$ & $137$ & $9.6941*10^{-11}$\\
				Alg(In-M) & $0.0146$ & $83$ & $4.6019*10^{-11}$ & $0.0105$ & $100$ & $7.5025*10^{-11}$\\
				Alg($p$-Acc)&  $0.0074$& $34$ & $8.5375*10^{-11}$ &  $0.0087$& $46$ & $8.6438*10^{-11}$\\
				\hline
			\end{tabular*}

		\end{minipage}
	\end{center}
\end{table}

\begin{figure}[htbp]
	\centering
	\begin{subfigure}[b]{0.48\textwidth}
		\includegraphics[width=\textwidth]{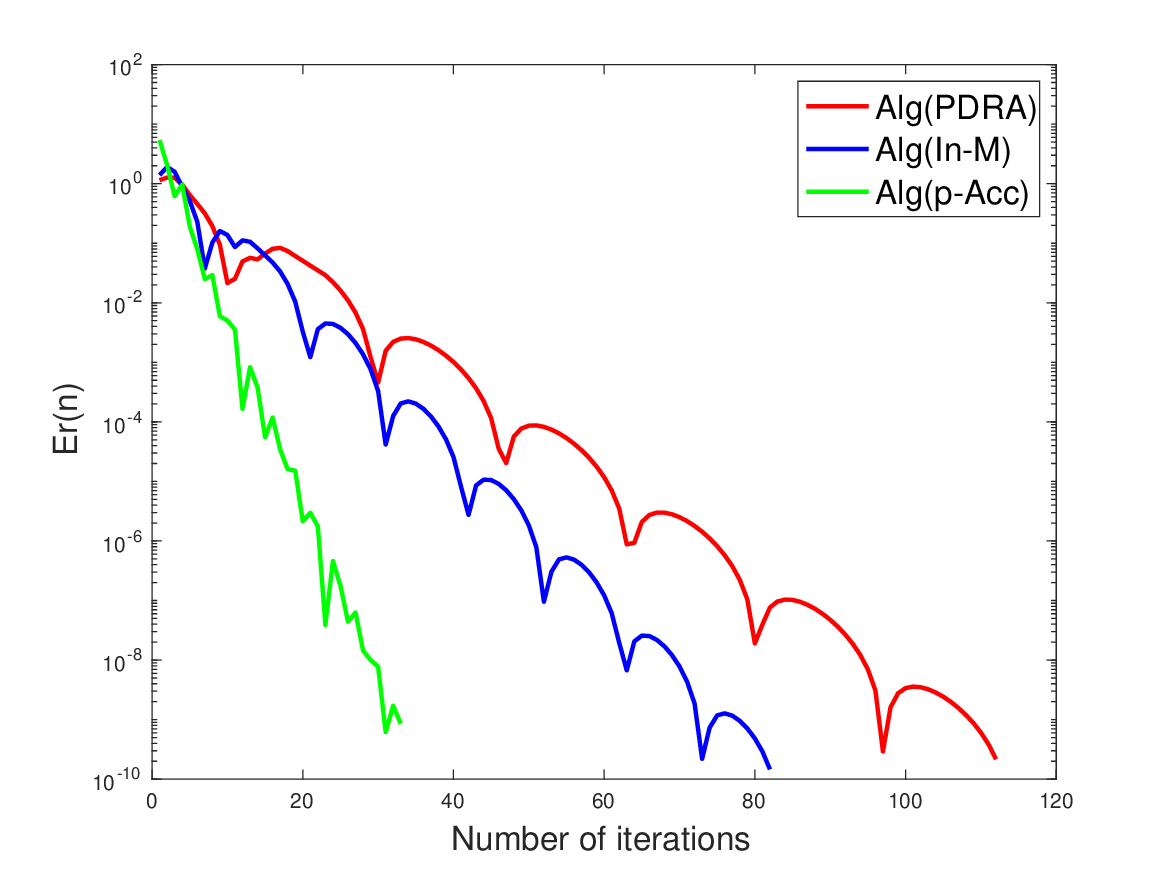}
		\caption{Case 1}
	\end{subfigure}
	\hfill
	\begin{subfigure}[b]{0.48\textwidth}
		\includegraphics[width=\textwidth]{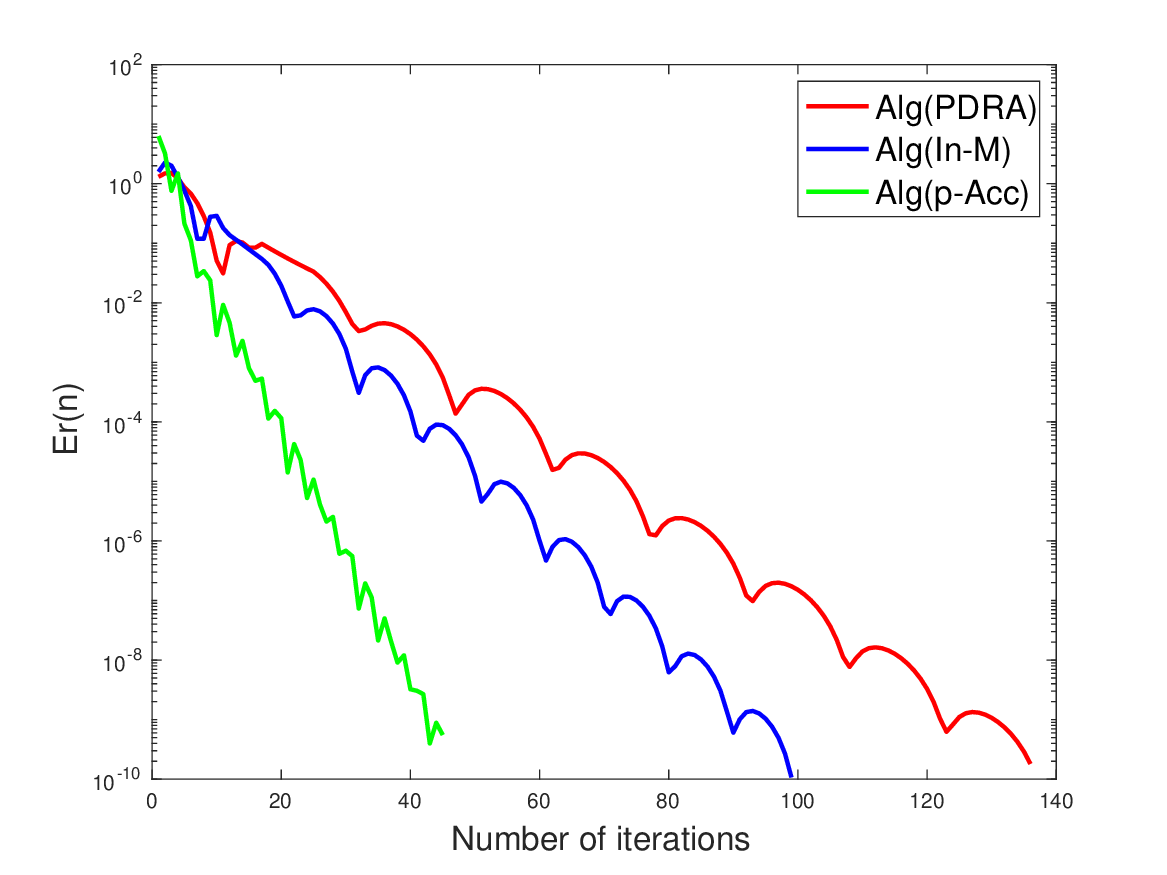}
		\caption{Case 2}
	\end{subfigure}
	\caption{Comparison of Alg(PDRA), Alg(In-M), and Alg($p$-Acc) based on the relationship between the number of iterations and $Er(n)$ for Example~\ref{Ballwithballexmp}.}
	\label{compareEn3}
\end{figure}

\section{Conclusion}\label{sec13}

In this paper, we have proposed new accelerated algorithms--the inertial Mann method and the $p$-accelerated normal S-method for finding fixed points of nonexpansive operators on Hadamard manifolds. These methods were further utilized to minimize the sum of two geodesically convex functions on Hadamard manifolds. For the minimization problem, we have derived the inertial Douglas-Rachford method and the $p$-accelerated normal S-Douglas-Rachford method, which aim to improve the convergence speed of the classical Douglas-Rachford method in the context of Hadamard manifolds. We have studied the convergence analysis of these methods and analyzed their asymptotic behavior under mild conditions on the iteration parameters.

Additionally, we introduced parallel versions of these algorithms--the inertial parallel Douglas-Rachford method and the $p$-accelerated normal S-parallel Douglas-Rachford method for finding the solution of  minimization problems involving sum of more than two functions. We applied these parallel methods to find the solution of the generalized Heron problem in the context of Hadamard manifolds. We conducted numerical experiments which showed the efficiency of the proposed methods.

\bmhead{Acknowledgements}

We thank Ronny Bergmann for his insightful discussions on the parallel Douglas-Rachford methods in the context of Hadamard manifolds, and for his suggestions regarding the numerical experiments. The authors are grateful to the anonymous referees for careful reading of the manuscript and for various suggestions that helped to improve the paper.

\section*{Declarations}

\begin{itemize}
\item Funding: Not applicable.
\item Conflict of interest/Competing interests: The authors declare that they have no competing interests.
\item Ethics approval and consent to participate: Not applicable.
\item Consent for publication: Not applicable.
\item Data availability: Not applicable.
\item Materials availability: Not applicable.
\item Code availability: The code used for the experiments is available from the corresponding author upon reasonable request.
\item Author contribution: All authors read and approved the final manuscript.
\end{itemize}

\begin{appendices}

\section{Proof of Proposition \ref{Newpr2}}\label{Ap1} 

First, we recall some basic results on Hadamard manifolds.

\begin{Lemma}[\cite{Martin1}]  \label{Properties}
	Let $\M$ be a Hadamard manifold and let $x,y,z \in \M$. Then, we have the following:
	\begin{enumerate}
		\item[(i)] Let $\varsigma_y$  be the angle at the vertex $y$. Then
		$\langle \exp_y^{-1}x , \exp_y^{-1}z \rangle = d(x,y) d (y,z)
		\cos\varsigma_{y}$.
		\item[(ii)]  $d^2(x,y) +d^2 (y,z)- 2 \langle \exp_y^{-1}x
		, \exp_y^{-1}z \rangle \leq d^2(z, x)$.
	\end{enumerate}
\end{Lemma}

\begin{Lemma}   [\cite{Martin1}]  \label{L3} 
	Let $\triangle(x,y,z)$ be a geodesic
	triangle in a Hadamard manifold $\M$ and $\triangle(x^{\prime},y^{\prime },z^{\prime })$ be its comparison triangle.
	\begin{enumerate}
		\item[\textrm{(a)}] Let $\varsigma_x, \varsigma_y, \varsigma_z$ (resp. $\varsigma_x', \varsigma_y', \varsigma_z'$) be the angles of $\triangle(x,y,z)$ (resp.
		$\triangle(x^{\prime },y^{\prime },z^{\prime })$) at the vertices $x,y,z$
		(resp. $x^{\prime },y^{\prime },z^{\prime }$). Then $\varsigma_x^{\prime }\geq \varsigma_x$ , $\varsigma_y^{\prime }\geq \varsigma_y$  and
		$\varsigma_z^{\prime }\geq \varsigma_z$.
		
		\item[\textrm{(b)}] Let $w$ be a point in the geodesic joining $x$ to $y$ and $w^{\prime }$ its comparison point in the interval $[x^{\prime },y^{\prime }]$. Suppose that $d(w,x)= \|w^{\prime }-x^{\prime }\|$ and $d(w,y)=\|w^{\prime}-y^{\prime }\|$. Then $d(w,z)\ \leq \|w^{\prime }-z^{\prime }\|.$
	\end{enumerate}
\end{Lemma}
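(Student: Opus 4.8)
The plan is to deduce both parts from Remark~\ref{Properties}, the Euclidean law of cosines, and Stewart's cevian-length identity in $\R^2$; no machinery beyond the stated preliminaries is needed. Fix a comparison triangle $\triangle(x',y',z')\subset\R^2$ with $\|x'-y'\|=d(x,y)$, $\|y'-z'\|=d(y,z)$, $\|z'-x'\|=d(z,x)$; it exists because $d$ is a metric (so the three side lengths obey the triangle inequality) and is unique up to isometry of $\R^2$. Degenerate cases (two of $x,y,z$ coinciding, or the three lying on one geodesic, so that $\triangle(x',y',z')$ collapses to a segment) are checked directly and omitted from the sketch.

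\emph{Part (a).} This is little more than a rereading of Remark~\ref{Properties}. Take the vertex $y$ (with $d(x,y),d(y,z)>0$). By Remark~\ref{Properties}(ii), $d^2(x,y)+d^2(y,z)-2\langle\exp_y^{-1}x,\exp_y^{-1}z\rangle\le d^2(z,x)$, and by Remark~\ref{Properties}(i) the inner product equals $d(x,y)\,d(y,z)\cos\varsigma_y$; hence
\[
\cos\varsigma_y\ \ge\ \frac{d^2(x,y)+d^2(y,z)-d^2(x,z)}{2\,d(x,y)\,d(y,z)}\ =\ \cos\varsigma_y',
\]
the last equality being the Euclidean law of cosines for $\triangle(x',y',z')$ at $y'$. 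Since $\varsigma_y,\varsigma_y'\in[0,\pi]$ and $\cos$ is decreasing there, $\varsigma_y'\ge\varsigma_y$; repeating the argument at $x$ and $z$ finishes part (a).

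\emph{Part (b).} First I would reduce to the $\mathrm{CAT}(0)$-type inequality
\[
d^2(w,z)\ \le\ (1-s)\,d^2(x,z)+s\,d^2(y,z)-s(1-s)\,d^2(x,y),\qquad s:=\tfrac{d(x,w)}{d(x,y)}\in[0,1],
\]
because a direct expansion in $\R^2$ (Stewart's identity for the cevian $[w',z']$, using that $w'$ is the point of $[x',y']$ with $\|w'-x'\|=d(w,x)$, $\|w'-y'\|=d(w,y)$, i.e. the comparison point of $w$) shows that its right-hand side equals $\|w'-z'\|^2$. To prove the displayed inequality, put $a=d(x,w)$, $b=d(w,y)$ (so $a+b=d(x,y)$, $s=a/(a+b)$), join $z$ to $w$ by a geodesic, and apply Remark~\ref{Properties}(i)--(ii) at the vertex $w$ in each of $\triangle(x,w,z)$ and $\triangle(y,w,z)$:
\[
d^2(x,z)\ \ge\ a^2+d^2(w,z)-2a\,d(w,z)\cos\alpha,\qquad
d^2(y,z)\ \ge\ b^2+d^2(w,z)-2b\,d(w,z)\cos\beta,
\]
where $\alpha,\beta$ denote the angle at $w$ in the first and second sub-triangle. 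Since $w$ lies on the unique (minimal) geodesic $\gamma(x,y;\cdot)$, the vectors $\exp_w^{-1}x$ and $\exp_w^{-1}y$ are positive multiples of $-\gamma'$ and $+\gamma'$, hence antiparallel, so $\alpha+\beta=\pi$ and $\cos\beta=-\cos\alpha$. Multiplying the first inequality by $b/(a+b)$, the second by $a/(a+b)$ and adding, the $\cos\alpha$ terms cancel and one obtains $d^2(w,z)\le\tfrac{b}{a+b}d^2(x,z)+\tfrac{a}{a+b}d^2(y,z)-ab$, which is exactly the claimed inequality after substituting $s=a/(a+b)$ and $ab=s(1-s)d^2(x,y)$. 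Therefore $d(w,z)\le\|w'-z'\|$.

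\emph{Main obstacle.} The manipulations are routine; the one step requiring care is the straight-angle identity $\alpha+\beta=\pi$ at $w$, i.e. that passing through an interior point of a minimal geodesic does not bend it --- here one uses uniqueness of geodesics on a Hadamard manifold (Proposition~\ref{gtoe}), together with the observation that the restrictions of $\gamma(x,y;\cdot)$ to the two subintervals through $w$ are themselves the minimal geodesics $\gamma(x,w;\cdot)$ and $\gamma(w,y;\cdot)$. A secondary, purely logical point: Remark~\ref{Properties}(ii) is itself a comparison estimate, so a genuinely self-contained proof would instead invoke the Hessian comparison inequality $\hess\bigl(d(z,\cdot)^2\bigr)(v,v)\ge 2\|v\|^2$ (valid on a Hadamard manifold since $d(z,\cdot)^2$ is smooth, there being no cut locus) and note that $t\mapsto d^2\bigl(z,\gamma(x,y;t)\bigr)-\bigl[(1-t)d^2(x,z)+t\,d^2(y,z)-t(1-t)d^2(x,y)\bigr]$ is then convex on $[0,1]$ with zero values at $t=0,1$, hence $\le 0$; given Remark~\ref{Properties}, the short derivation above is preferable.
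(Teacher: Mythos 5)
Your argument is correct, but note that the paper does not prove Lemma~\ref{L3} at all: it is imported verbatim from the literature (the citation to L\'opez et al.) and used as a black box, so there is no in-paper proof to compare against. Measured against the paper's stated preliminaries, your derivation works: part (a) is exactly the combination of Remark~\ref{Properties}(i)--(ii) with the Euclidean law of cosines and monotonicity of $\cos$ on $[0,\pi]$; and in part (b) your weighted combination of the two sub-triangle inequalities at $w$, using $\cos\beta=-\cos\alpha$ from the straight angle along the minimal geodesic, correctly yields
\[
d^2(w,z)\le(1-s)\,d^2(x,z)+s\,d^2(y,z)-s(1-s)\,d^2(x,y),
\]
whose right-hand side is $\|w'-z'\|^2$ by the Stewart-type identity for $w'=(1-s)x'+sy'$. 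In fact this intermediate inequality is precisely Proposition~\ref{Newpr1}, which the paper already quotes, so within the paper's framework part (b) could be shortened to ``Proposition~\ref{Newpr1} plus the Euclidean identity.'' The one caveat you already flag deserves emphasis: Remark~\ref{Properties}(ii) (and Proposition~\ref{Newpr1}) carry essentially the same CAT(0) comparison content as Lemma~\ref{L3} itself, and in the literature they are usually \emph{derived from} angle/triangle comparison; so your proof is a legitimate deduction inside this paper's axiomatics but not an independent proof of comparison geometry. Your fallback via the Hessian comparison $\hess d^2(z,\cdot)\ge 2\langle\cdot,\cdot\rangle$ and convexity of the difference function on $[0,1]$ is the standard way to break that circle, and mentioning it is the right call; also make sure the degenerate cases you set aside (collinear or coinciding vertices, where angles are undefined) are genuinely trivial, which they are since both sides reduce to equalities of distances.
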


\begin{Lemma}[ \cite{KhammaAOT2022}]   \label{PTle}
	Let $\M$ be an Hadamard manifold and $x,y,z\in \M$. Then 
	$$\|\exp_x^{-1}z-P_{x,y}\exp_y^{-1}z\|\leq d(x,y).$$
\end{Lemma}
\ \\
	\textit{Proof of Proposition \ref{Newpr2}.} Let $u=\exp_x(-\theta \exp_x^{-1}y)$.	Consider the geodesic triangle $\triangle(u,x,z)$. Then, there exists the corresponding comparison triangle $\triangle(\bar{u}, \bar{x},\bar{z})\subset \R^2$ such that 
	$$d(u,x)=\|\bar{u}-\bar{x}\|, d(u,z)=\|\bar{u}-\bar{z}\|\text{ and } d(x,z)=\|\bar{x}-\bar{z}\|.$$
	Observe that 
	\begin{align}
		d^2(u,z)&=\|\bar{u}-\bar{z}\|^2=\|\bar{u}-\bar{x}+\bar{x}-\bar{z}\|^2=\|\bar{u}-\bar{x}\|^2+\|\bar{x}-\bar{z}\|^2+2\langle \bar{u}-\bar{x},\bar{x}-\bar{z}\rangle\nonumber\\
		& = \|\bar{u}-\bar{x}\|^2+\|\bar{x}-\bar{z}\|^2+2\langle \bar{u}-\bar{x},\bar{x}-\bar{z}\rangle+2\|\bar{x}-\bar{z}\|^2-2\|\bar{x}-\bar{z}\|^2 \nonumber\\
		& = d^2(u,x)+d^2(x,z)+2\langle \bar{u}-\bar{z},\bar{x}-\bar{z}\rangle-2d^2(x,z). \label{PKinreq3'}
	\end{align} 
	Suppose that the angles at the vertices $z$ and $\bar{z}$ are denoted by $\varsigma$ and $\varsigma'$ respectively. Then, by Lemma $\ref{L3}$, we have $\gamma \leq \gamma'$. From Lemma \ref{Properties}(i), we get
	\begin{align}
		\langle \bar{u}-\bar{z},\bar{x}-\bar{z}\rangle&=\|\bar{u}-\bar{z}\|\|\bar{x}-\bar{z}\|\cos(\varsigma')= d(u,z)d(x,z)\cos(\varsigma')\nonumber\\
		& \leq d(u,z)d(x,z)\cos(\varsigma)=\langle \exp_{z}^{-1}u,\exp_z^{-1}x \rangle.  \label{PKinreq4'}
	\end{align}
	From $\eqref{PKinreq3'}$ and $\eqref{PKinreq4'}$, we have 
	\begin{equation}\label{PKinreq5'}
		d^2(u,z)\leq d^2(u,x)+d^2(x,z)+2\langle  \exp_z^{-1}u, \exp_z^{-1}x\rangle-2d^2(x,z).
	\end{equation}
	Note that  $\exp_{x}^{-1}u=-\theta\exp_{x}^{-1}y$. Hence $$d(x,u)=\|\exp_{x}^{-1}u\|=\|\exp_{x}^{-1}(\exp_{x}(-\theta \exp_{x}^{-1}y))\|=\theta\|\exp_{x}^{-1}y\|=\theta d(x,y),$$ 
	where $\theta\in [0,1)$. It follows from $\eqref{PKinreq5'}$ that 
	\begin{align}\label{aboineq}
		d^2(u,z)&\leq \theta^2d^2(x,y)+d^2(x,z)+2\langle  \exp_z^{-1}u, \exp_z^{-1}x\rangle-2d^2(x,z).
	\end{align}
	Taking into consideration Remark $\ref{re101}$ and Lemma $\ref{PTle}$, from \eqref{aboineq} we obtain
	\begin{align}
		d^2(u,z)&\leq \theta^2 d^2(x,y)+d^2(x,z)-2d^2(x,z) \nonumber\\
		&\quad+2\langle  \exp_z^{-1}u-P_{z,x}\exp_{x}^{-1}u+P_{z,x}\exp_{x}^{-1}u, \exp_z^{-1}x\rangle\nonumber\\
		&=\theta^2 d^2(x,y)+d^2(x,z)-2d^2(x,z) \nonumber\\
		&\quad+2\langle  \exp_z^{-1}u-P_{z,x}\exp_{x}^{-1}u, \exp_z^{-1}x\rangle+2\langle P_{z,x}\exp_{x}^{-1}u,\exp_z^{-1}x\rangle\nonumber\\
		& \leq \theta^2 d^2(x,y)+d^2(x,z)-2d^2(x,z)\nonumber\\
		&\quad+2\|\exp_{z}^{-1}u-P_{z,x}\exp_{x}^{-1}u\|\|\exp_{z}^{-1}x\|-2\langle \exp_{x}^{-1}u,\exp_{x}^{-1}z \rangle\nonumber\\
		& \leq \theta^2 d^2(x,y)+d^2(x,z)-2d^2(x,z)+2d^2(z,x)-2\langle \exp_{x}^{-1}u,\exp_{x}^{-1}z \rangle\nonumber\\
		&=\theta^2 d^2(x,y)+d^2(x,z)-2\langle \exp_{x}^{-1}u,\exp_{x}^{-1}z \rangle. \label{PKinreq6'}
	\end{align}
	From the fact that $\exp_{x}^{-1}u=-\theta\exp_{x}^{-1}y$,  Remark $\ref{re101}$, Lemma $\ref{PTle}$ and  $\eqref{PKinreq6'}$, we have 
	\begin{align}
		&d^2(u,z)\nonumber\\
		&\leq \theta^2 d^2(x,y)+d^2(x,z)+2\theta \langle \exp_{x}^{-1}y,\exp_{x}^{-1}z \rangle \nonumber\\
		&=\theta^2 d^2(x,y)+d^2(x,z)+2\theta \langle  \exp_{x}^{-1}y-P_{x,z}\exp_{z}^{-1}y+P_{x,z}\exp_{z}^{-1}y, \exp_{x}^{-1}z\rangle \nonumber\\
		&=\theta^2 d^2(x,y)+d^2(x,z)+2\theta \langle \exp_{x}^{-1}y-P_{x,z}\exp_z^{-1}y,\exp_{x}^{-1}z \rangle\nonumber\\
		&\quad-2\theta \langle \exp_{z}^{-1}y,\exp_{z}^{-1}x \rangle\nonumber\\
		& \leq \theta^2  d^2(x,y)+d^2(x,z)+2\theta \|\exp_{x}^{-1}y-P_{x,z}\exp_{z}^{-1}y\|\|\exp_{x}^{-1}z\|\nonumber\\
		&\quad- 2\theta \langle \exp_{z}^{-1}y,\exp_{z}^{-1}x \rangle\nonumber\\
		& \leq \theta^2 d^2(x,y)+d^2(x,z)+2\theta d^2(x,z)- 2\theta \langle \exp_{z}^{-1}y,\exp_{z}^{-1}x \rangle.\label{PKinreq7'}
	\end{align}
	From Lemma \ref{Properties}(ii), we have
	\begin{equation}
		- 2\langle \exp_{z}^{-1}y,\exp_{z}^{-1}x \rangle \leq d^2(x,y)-d^2(y,z)-d^2(x,z).\label{PKinreq8'}
	\end{equation}
	By combining $\eqref{PKinreq7'}$ and $\eqref{PKinreq8'}$, we get
	\begin{align}
		d^2(u,z) &\leq \theta^2 d^2(x,y)+d^2(x,z)+2\theta d^2(x,z)+\theta d^2(x,y)-\theta d^2(y,z)-\theta d^2(x,z)\nonumber\\
		&=(1+\theta)d^2(x,z)-\theta d^2(y,z)+\theta(1+\theta) d^2(x,y).\nonumber
	\end{align}

\section{Proof of Proposition \ref{Newpr3}} \label{Ap3}

For the proof of Proposition~\ref{Newpr3}, we first recall \cite[Proposition 4.1]{Leon2013} for sectional curvature $k=0$.

\begin{Proposition}[\cite{Leon2013}]   \label{zeroProp}
	Let $\M$ be a Hadamard manifold with zero sectional curvature and let
	$x,y,x',y',a,b \in \M$ satisfy
	\(
	x'=\exp_a\bigl(-\exp_a^{-1}x\bigr)
	\quad \text{and} \quad
	y'=\exp_b\bigl(-\exp_b^{-1}y\bigr).
	\)
	Suppose that
	$d(a,b)\leq d\bigl(\gamma(x,a;t),\gamma(y,b;t)\bigr)$
	for all  $t \in [0,1]$.
	Then
	\(
	d(x',y')\leq d(x,y).
	\)
\end{Proposition}
\ \\
\textit{Proof of Proposition \ref{Newpr3}.} By the definition of the reflection, we have
	\[
	R_{\lambda f}(x)
	= \exp_{\prox_{\lambda f}(x)}\!\left(-\exp_{\prox_{\lambda f}(x)}^{-1}(x)\right),
	\quad
	R_{\lambda f}(y)
	= \exp_{\prox_{\lambda f}(y)}\!\left(-\exp_{\prox_{\lambda f}(y)}^{-1}(y)\right).
	\]
	Since $\prox_{\lambda f}$ is a firmly nonexpansive operator, it follows from \eqref{FNEeq} that
\[d(\operatorname{Prox}_{\!\lambda f}(x),\operatorname{Prox}_{\!\lambda f}(y)) \leq d(\gamma(x,\operatorname{Prox}_{\!\lambda f}(x);t),\gamma(y,\operatorname{Prox}_{\!\lambda f}(y);t)) \text{ for all } t \in [0,1].\]
	Therefore, by Proposition~\ref{zeroProp}, we obtain
	\(
	d\bigl(R_{\lambda f}(x), R_{\lambda f}(y)\bigr)
	\leq d(x,y).
	\)

\end{appendices}

\bibliography{reference}

@book {Docarmo,
	AUTHOR = {do Carmo, Manfredo Perdig{a}o},
	TITLE = {Riemannian geometry},
	SERIES = {Mathematics: Theory \& Applications},
	EDITION = {Portuguese},
	PUBLISHER = {Birkh\"{a}user Boston, Inc.},
	ADDRESS={Boston, MA},
	YEAR = {1992},
	PAGES = {xiv+300},
	ISBN = {0-8176-3490-8},
	MRCLASS = {53-01},
	MRNUMBER = {1138207},
	MRREVIEWER = {Bang-yen\ Chen},
	DOI = {10.1007/978-1-4757-2201-7},
}

@article {Martin1,
	AUTHOR = {Li, Chong and L\'{o}pez, Genaro and Mart\'{\i}n-M\'{a}rquez,
	Victoria},
	TITLE = {Iterative algorithms for nonexpansive mappings on {H}adamard
	manifolds},
	JOURNAL = {Taiwanese J. Math.},
	FJOURNAL = {Taiwanese Journal of Mathematics},
	VOLUME = {14},
	YEAR = {2010},
	NUMBER = {2},
	PAGES = {541--559},
	ISSN = {1027-5487,2224-6851},
	MRCLASS = {47H10 (47H09 47J25 53C22)},
	MRNUMBER = {2655786},
	MRREVIEWER = {Simeon\ Reich},
	DOI = {10.11650/twjm/1500405806},
}

@article {Bergmann2016,
	AUTHOR = {Bergmann, Ronny and Persch, Johannes and Steidl, Gabriele},
	TITLE = {A parallel {D}ouglas-{R}achford algorithm for minimizing
	{ROF}-like functionals on images with values in symmetric
	{H}adamard manifolds},
	JOURNAL = {SIAM J. Imaging Sci.},
	FJOURNAL = {SIAM Journal on Imaging Sciences},
	VOLUME = {9},
	YEAR = {2016},
	NUMBER = {3},
	PAGES = {901--937},
	ISSN = {1936-4954},
	MRCLASS = {94A08 (49Q99 65K10 68U10)},
	MRNUMBER = {3519549},
	MRREVIEWER = {Bin\ Han},
	DOI = {10.1137/15M1052858},
}

@article {Martin2,
	AUTHOR = {Li, Chong and L\'{o}pez, Genaro and Mart\'{\i}n-M\'{a}rquez,
	Victoria},
	TITLE = {Monotone vector fields and the proximal point algorithm on
	{H}adamard manifolds},
	JOURNAL = {J. Lond. Math. Soc. (2)},
	FJOURNAL = {Journal of the London Mathematical Society. Second Series},
	VOLUME = {79},
	YEAR = {2009},
	NUMBER = {3},
	PAGES = {663--683},
	ISSN = {0024-6107,1469-7750},
	MRCLASS = {47J25 (47H05 49J40 53C20 90C32)},
	MRNUMBER = {2506692},
	MRREVIEWER = {S\'{a}ndor\ Z.\ N\'{e}meth},
	DOI = {10.1112/jlms/jdn087},
}

@article {Martin3,
	AUTHOR = {Li, Chong and L\'{o}pez, Genaro and Mart\'{\i}n-M\'{a}rquez,
	Victoria and Wang, Jin-Hua},
	TITLE = {Resolvents of set-valued monotone vector fields in {H}adamard
	manifolds},
	JOURNAL = {Set-Valued Var. Anal.},
	FJOURNAL = {Set-Valued and Variational Analysis. Theory and Applications},
	VOLUME = {19},
	YEAR = {2011},
	NUMBER = {3},
	PAGES = {361--383},
	ISSN = {1877-0533,1877-0541},
	MRCLASS = {47H04 (45H05 47A10 53C20)},
	MRNUMBER = {2824431},
	MRREVIEWER = {S\'{a}ndor\ Z.\ N\'{e}meth},
	DOI = {10.1007/s11228-010-0169-1},
}

@book {Sakai,
	AUTHOR = {Sakai, Takashi},
	TITLE = {Riemannian geometry},
	SERIES = {Translations of Mathematical Monographs},
	VOLUME = {149},
	NOTE = {Translated from the 1992 Japanese original by the author},
	PUBLISHER = {American Mathematical Society},
	ADDRESS={Providence, RI},
	YEAR = {1996},
	PAGES = {xiv+358},
	ISBN = {0-8218-0284-4},
	MRCLASS = {53-01 (53-02)},
	MRNUMBER = {1390760},
	MRREVIEWER = {Conrad\ Plaut},
	DOI = {10.1090/mmono/149},
}

@book {Udriste,
	AUTHOR = {Udri\c{s}te, Constantin},
	TITLE = {Convex functions and optimization methods on {R}iemannian
	manifolds},
	SERIES = {Mathematics and its Applications},
	VOLUME = {297},
	PUBLISHER = {Kluwer Academic Publishers Group},
	ADDRESS={Dordrecht},
	YEAR = {1994},
	PAGES = {xviii+348},
	ISBN = {0-7923-3002-1},
	MRCLASS = {49K27 (52A41 58C05 65K10 90C48)},
	MRNUMBER = {1326607},
	MRREVIEWER = {Robert\ L.\ Foote},
	DOI = {10.1007/978-94-015-8390-9},
}

@article {Fereira2005,
	AUTHOR = {Ferreira, O. P. and P\'{e}rez, L. R. Lucambio and N\'{e}meth,
	S. Z.},
	TITLE = {Singularities of monotone vector fields and an
	extragradient-type algorithm},
	JOURNAL = {J. Global Optim.},
	FJOURNAL = {Journal of Global Optimization. An International Journal
	Dealing with Theoretical and Computational Aspects of Seeking
	Global Optima and Their Applications in Science, Management
	and Engineering},
	VOLUME = {31},
	YEAR = {2005},
	NUMBER = {1},
	PAGES = {133--151},
	ISSN = {0925-5001,1573-2916},
	MRCLASS = {58C05 (52A55 90C26)},
	MRNUMBER = {2141129},
	MRREVIEWER = {P.\ P.\ Zabre\u{\i}ko},
	DOI = {10.1007/s10898-003-3780-y},
}

@article {SahuNFAO,
	AUTHOR = {Sahu, D. R. and Ansari, Q. H. and Yao, J. C.},
	TITLE = {Convergence of inexact {M}ann iterations generated by nearly
	nonexpansive sequences and applications},
	JOURNAL = {Numer. Funct. Anal. Optim.},
	FJOURNAL = {Numerical Functional Analysis and Optimization. An
	International Journal},
	VOLUME = {37},
	YEAR = {2016},
	NUMBER = {10},
	PAGES = {1312--1338},
	ISSN = {0163-0563,1532-2467},
	MRCLASS = {47H09 (47H10)},
	MRNUMBER = {3553009},
	DOI = {10.1080/01630563.2016.1206566},
}

@article {Ferreira2002,
	AUTHOR = {Ferreira, O. P. and Oliveira, P. R.},
	TITLE = {Proximal point algorithm on {R}iemannian manifolds},
	JOURNAL = {Optimization},
	FJOURNAL = {Optimization. A Journal of Mathematical Programming and
	Operations Research},
	VOLUME = {51},
	YEAR = {2002},
	NUMBER = {2},
	PAGES = {257--270},
	ISSN = {0233-1934,1029-4945},
	MRCLASS = {49M30 (90C26 90C48)},
	MRNUMBER = {1928039},
	MRREVIEWER = {R.\ Tichatschke},
	DOI = {10.1080/02331930290019413},
}

@book {BacakNAA2014,
	AUTHOR = {Ba\v{c}\'{a}k, Miroslav},
	TITLE = {Convex analysis and optimization in {H}adamard spaces},
	SERIES = {De Gruyter Series in Nonlinear Analysis and Applications},
	VOLUME = {22},
	PUBLISHER = {De Gruyter},
	ADDRESS = {Berlin},
	YEAR = {2014},
	PAGES = {viii--185},
	ISBN = {978-3-11-036103-2; 978-3-11-036162-9},
	MRCLASS = {49-02 (49K27 60B99 60J10 90C25 90C48 92D15)},
	MRNUMBER = {3241330},
	MRREVIEWER = {Simeon\ Reich},
	DOI = {10.1515/9783110361629},
}

@article {KhammaAOT2022,
	AUTHOR = {Khammahawong, Konrawut and Chaipunya, Parin and Kumam, Poom},
	TITLE = {Iterative algorithms for monotone variational inequality and
	fixed point problems on {H}adamard manifolds},
	JOURNAL = {Adv. Oper. Theory},
	FJOURNAL = {Advances in Operator Theory},
	VOLUME = {7},
	YEAR = {2022},
	NUMBER = {},
	PAGES = {Paper No. 43},
	ISSN = {2662-2009,2538-225X},
	MRCLASS = {47J25 (47H05 58A05 58C30)},
	MRNUMBER = {4455181},
	DOI = {10.1007/s43036-022-00207-z},
}

@article {Alvarez2004,
	AUTHOR = {Alvarez, Felipe},
	TITLE = {Weak convergence of a relaxed and inertial hybrid
	projection-proximal point algorithm for maximal monotone
	operators in {H}ilbert space},
	JOURNAL = {SIAM J. Optim.},
	FJOURNAL = {SIAM Journal on Optimization},
	VOLUME = {14},
	YEAR = {2003},
	NUMBER = {3},
	PAGES = {773--782},
	ISSN = {1052-6234,1095-7189},
	MRCLASS = {49J40 (47H05 47J25 65K10 90C25 90C31 90C48)},
	MRNUMBER = {2085942},
	MRREVIEWER = {M.\ Beatrice\ Lignola},
	DOI = {10.1137/S1052623403427859},
}

@article {Leon2013,
	AUTHOR = {Fern\'{a}ndez-Le\'{o}n, Aurora and Nicolae, Adriana},
	TITLE = {Averaged alternating reflections in geodesic spaces},
	JOURNAL = {J. Math. Anal. Appl.},
	FJOURNAL = {Journal of Mathematical Analysis and Applications},
	VOLUME = {402},
	YEAR = {2013},
	NUMBER = {2},
	PAGES = {558--566},
	ISSN = {0022-247X,1096-0813},
	MRCLASS = {47J25 (47H10)},
	MRNUMBER = {3029170},
	DOI = {10.1016/j.jmaa.2013.01.060},
}

@article {Acedo2007,
	AUTHOR = {Acedo, Genaro Lopez and Xu, Hong Kun},
	TITLE = {Iterative methods for strict pseudo-contractions in {H}ilbert
	spaces},
	JOURNAL = {Nonlinear Anal.},
	FJOURNAL = {Nonlinear Analysis. Theory, Methods \& Applications. An
	International Multidisciplinary Journal},
	VOLUME = {67},
	YEAR = {2007},
	NUMBER = {7},
	PAGES = {2258--2271},
	ISSN = {0362-546X,1873-5215},
	MRCLASS = {47H10 (47H09 47J25)},
	MRNUMBER = {2331876},
	MRREVIEWER = {Ramendra\ Krishna\ Bose},
	DOI = {10.1016/j.na.2006.08.036},
}

@article{Alvarez2001,
	AUTHOR = {Alvarez, Felipe and Attouch, Hedy},
	TITLE = {An inertial proximal method for maximal monotone operators via discretization of a nonlinear oscillator with damping},
	NOTE = {Wellposedness in optimization and related topics (Gargnano,
	1999)},
	JOURNAL = {Set-Valued Anal.},
	FJOURNAL = {Set-Valued Analysis. An International Journal Devoted to the
	Theory of Multifunctions and its Applications},
	VOLUME = {9},
	YEAR = {2001},
	NUMBER = {1-2},
	PAGES = {3--11},
	ISSN = {0927-6947,1572-932X},
	MRCLASS = {65K10 (34A60 34G25 47N10 49M25)},
	MRNUMBER = {1845931},
	DOI = {10.1023/A:1011253113155},
}

@article {Bauschke1996,
	AUTHOR = {Bauschke, Heinz H. and Borwein, Jonathan M.},
	TITLE = {On projection algorithms for solving convex feasibility
	problems},
	JOURNAL = {SIAM Rev.},
	FJOURNAL = {SIAM Review. A Publication of the Society for Industrial and
	Applied Mathematics},
	VOLUME = {38},
	YEAR = {1996},
	NUMBER = {3},
	PAGES = {367--426},
	ISSN = {0036-1445},
	MRCLASS = {90C25 (47N10 49M45 65J10)},
	MRNUMBER = {1409591},
	MRREVIEWER = {F.\ Deutsch},
	DOI = {10.1137/S0036144593251710},
}

@article {Bot2016,
	AUTHOR = {Bo\c{t}, Radu Ioan and Csetnek, Ern\"{o} Robert},
	TITLE = {An inertial alternating direction method of multipliers},
	JOURNAL = {Minimax Theory Appl.},
	FJOURNAL = {Minimax Theory and its Applications},
	VOLUME = {1},
	YEAR = {2016},
	NUMBER = {1},
	PAGES = {29--49},
	ISSN = {2199-1413,2199-1421},
	MRCLASS = {47H05 (65K05 90C25)},
	MRNUMBER = {3477895},
}

@article {Bot2015,
	AUTHOR = {Bo\c{t}, Radu Ioan and Csetnek, Ern\"{o} Robert and Hendrich,
	Christopher},
	TITLE = {Inertial {D}ouglas-{R}achford splitting for monotone inclusion
	problems},
	JOURNAL = {Appl. Math. Comput.},
	FJOURNAL = {Applied Mathematics and Computation},
	VOLUME = {256},
	YEAR = {2015},
	PAGES = {472--487},
	ISSN = {0096-3003,1873-5649},
	MRCLASS = {47J25 (65J15 90C25)},
	MRNUMBER = {3316085},
	DOI = {10.1016/j.amc.2015.01.017},
}

@article {Byrne2004,
	AUTHOR = {Byrne, Charles},
	TITLE = {A unified treatment of some iterative algorithms in signal
	processing and image reconstruction},
	JOURNAL = {Inverse Problems},
	FJOURNAL = {Inverse Problems. An International Journal on the Theory and
	Practice of Inverse Problems, Inverse Methods and Computerized
	Inversion of Data},
	VOLUME = {20},
	YEAR = {2004},
	NUMBER = {1},
	PAGES = {103--120},
	ISSN = {0266-5611,1361-6420},
	MRCLASS = {47H09 (47H10 47J25 65J15 94A08 94A12)},
	MRNUMBER = {2044608},
	MRREVIEWER = {Gilbert\ Crombez},
	DOI = {10.1088/0266-5611/20/1/006},
}

@article {Chen2014,
	AUTHOR = {Chen, Caihua and Ma, Shiqian and Yang, Junfeng},
	TITLE = {A general inertial proximal point algorithm for mixed
	variational inequality problem},
	JOURNAL = {SIAM J. Optim.},
	FJOURNAL = {SIAM Journal on Optimization},
	VOLUME = {25},
	YEAR = {2015},
	NUMBER = {4},
	PAGES = {2120--2142},
	ISSN = {1052-6234,1095-7189},
	MRCLASS = {65K15 (49J40 90C25 90C33)},
	MRNUMBER = {3413597},
	MRREVIEWER = {Boualem\ Alleche},
	DOI = {10.1137/140980910},
}

@article {Chidume2001,
	AUTHOR = {Chidume, C. E. and Mutangadura, S. A.},
	TITLE = {An example of the {M}ann iteration method for {L}ipschitz
	pseudocontractions},
	JOURNAL = {Proc. Amer. Math. Soc.},
	FJOURNAL = {Proceedings of the American Mathematical Society},
	VOLUME = {129},
	YEAR = {2001},
	NUMBER = {8},
	PAGES = {2359--2363},
	ISSN = {0002-9939,1088-6826},
	MRCLASS = {47H10 (47J25 65J15)},
	MRNUMBER = {1823919},
	MRREVIEWER = {Hong\ Kun\ Xu},
	DOI = {10.1090/S0002-9939-01-06009-9},
}

@article {Dotson1970,
	AUTHOR = {Dotson, Jr., W. G.},
	TITLE = {On the {M}ann iterative process},
	JOURNAL = {Trans. Amer. Math. Soc.},
	FJOURNAL = {Transactions of the American Mathematical Society},
	VOLUME = {149},
	YEAR = {1970},
	PAGES = {65--73},
	ISSN = {0002-9947,1088-6850},
	MRCLASS = {47.80 (65.00)},
	MRNUMBER = {257828},
	MRREVIEWER = {M.\ Z.\ Nashed},
	DOI = {10.2307/1995659},
}

@article {Ishikawa1974,
	AUTHOR = {Ishikawa, Shiro},
	TITLE = {Fixed points by a new iteration method},
	JOURNAL = {Proc. Amer. Math. Soc.},
	FJOURNAL = {Proceedings of the American Mathematical Society},
	VOLUME = {44},
	YEAR = {1974},
	PAGES = {147--150},
	ISSN = {0002-9939,1088-6826},
	MRCLASS = {47H10},
	MRNUMBER = {336469},
	MRREVIEWER = {Gerald\ W.\ Johnson},
	DOI = {10.2307/2039245},
}

@article {Eckstein1992,
	AUTHOR = {Eckstein, Jonathan and Bertsekas, Dimitri P.},
	TITLE = {On the {D}ouglas-{R}achford splitting method and the proximal
	point algorithm for maximal monotone operators},
	JOURNAL = {Math. Programming},
	FJOURNAL = {Mathematical Programming},
	VOLUME = {55},
	YEAR = {1992},
	NUMBER = {3},
	PAGES = {293--318},
	ISSN = {0025-5610,1436-4646},
	MRCLASS = {90C25 (65K05)},
	MRNUMBER = {1168183},
	MRREVIEWER = {F.\ Giannessi},
	DOI = {10.1007/BF01581204},
}

@article {Combettes2008Inv,
	AUTHOR = {Combettes, Patrick L. and Pesquet, Jean-Christophe},
	TITLE = {A proximal decomposition method for solving convex variational
	inverse problems},
	JOURNAL = {Inverse Problems},
	FJOURNAL = {Inverse Problems. An International Journal on the Theory and
	Practice of Inverse Problems, Inverse Methods and Computerized
	Inversion of Data},
	VOLUME = {24},
	YEAR = {2008},
	NUMBER = {6},
	PAGES = {065014, 27},
	ISSN = {0266-5611,1361-6420},
	MRCLASS = {90C25 (65K10 90C48 94A08)},
	MRNUMBER = {2456961},
	MRREVIEWER = {Franklin\ A.\ Mendivil},
	DOI = {10.1088/0266-5611/24/6/065014},
}

@article {Lions1967,
	AUTHOR = {Lions, J.-L. and Stampacchia, G.},
	TITLE = {Variational inequalities},
	JOURNAL = {Comm. Pure Appl. Math.},
	FJOURNAL = {Communications on Pure and Applied Mathematics},
	VOLUME = {20},
	YEAR = {1967},
	PAGES = {493--519},
	ISSN = {0010-3640,1097-0312},
	MRCLASS = {47.90 (35.00)},
	MRNUMBER = {216344},
	MRREVIEWER = {T.\ R.\ Jenkins},
	DOI = {10.1002/cpa.3160200302},
}

@article {Lorenz2015,
	AUTHOR = {Lorenz, Dirk A. and Pock, Thomas},
	TITLE = {An inertial forward-backward algorithm for monotone
	inclusions},
	JOURNAL = {J. Math. Imaging Vision},
	FJOURNAL = {Journal of Mathematical Imaging and Vision},
	VOLUME = {51},
	YEAR = {2015},
	NUMBER = {2},
	PAGES = {311--325},
	ISSN = {0924-9907,1573-7683},
	MRCLASS = {94A08},
	MRNUMBER = {3314536},
	DOI = {10.1007/s10851-014-0523-2},
}

@article {Mainge2008,
	AUTHOR = {Maing\'{e}, Paul-Emile},
	TITLE = {Convergence theorems for inertial {KM}-type algorithms},
	JOURNAL = {J. Comput. Appl. Math.},
	FJOURNAL = {Journal of Computational and Applied Mathematics},
	VOLUME = {219},
	YEAR = {2008},
	NUMBER = {1},
	PAGES = {223--236},
	ISSN = {0377-0427,1879-1778},
	MRCLASS = {47H10 (47J25 65J15)},
	MRNUMBER = {2437708},
	MRREVIEWER = {Alexander\ J.\ Zaslavski},
	DOI = {10.1016/j.cam.2007.07.021},
}

@article {Mann1953,
	AUTHOR = {Mann, W. Robert},
	TITLE = {Mean value methods in iteration},
	JOURNAL = {Proc. Amer. Math. Soc.},
	FJOURNAL = {Proceedings of the American Mathematical Society},
	VOLUME = {4},
	YEAR = {1953},
	PAGES = {506--510},
	ISSN = {0002-9939,1088-6826},
	MRCLASS = {46.0X},
	MRNUMBER = {54846},
	MRREVIEWER = {E.\ H.\ Rothe},
	DOI = {10.2307/2032162},
}

@book {Mercier1980,
	AUTHOR = {Mercier, B.},
	TITLE = {In\'{e}quations variationnelles de la m\'{e}canique},
	SERIES = {Publications Math\'{e}matiques d'Orsay 80 [Mathematical
	Publications of Orsay 80]},
	VOLUME = {1},
	PUBLISHER = {Universit\'{e} de Paris-Sud, D\'{e}partement de
	Math\'{e}matiques},
	ADDRESS={Orsay},
	YEAR = {1980},
	PAGES = {98},
	MRCLASS = {49A29 (35Q20 65P05 73E05)},
	MRNUMBER = {570823},
	MRREVIEWER = {N.\ Gass},
}

@article {Bauschke2002,
	AUTHOR = {Bauschke, Heinz H. and Combettes, Patrick L. and Luke, D.
	Russell},
	TITLE = {Phase retrieval, error reduction algorithm, and {F}ienup
	variants: a view from convex optimization},
	JOURNAL = {J. Opt. Soc. Amer. A},
	FJOURNAL = {Journal of the Optical Society of America A. Optics, Image
	Science, and Vision},
	VOLUME = {19},
	YEAR = {2002},
	NUMBER = {7},
	PAGES = {1334--1345},
	ISSN = {1084-7529,1520-8532},
	MRCLASS = {94A12},
	MRNUMBER = {1914365},
	MRREVIEWER = {Michael\ M.\ Dediu},
	DOI = {10.1364/JOSAA.19.001334},
}

@book {Borwein2011,
	TITLE = {Fixed-point algorithms for inverse problems in science and
	engineering},
	SERIES = {Springer Optimization and Its Applications},
	VOLUME = {49},
	EDITOR = {Bauschke, Heinz H. and Burachik, Regina S. and Combettes,
	Patrick L. and Elser, Veit and Luke, D. Russell and Wolkowicz,
	Henry},
	PUBLISHER = {Springer},
	ADDRESS = {New York},
	YEAR = {2011},
	PAGES = {xii--402},
	ISBN = {978-1-4419-9568-1},
	MRCLASS = {49-06 (47-06 49N45 65-06 90-06)},
	MRNUMBER = {2858828},
	DOI = {10.1007/978-1-4419-9569-8},
}

@article {Ochs2014,
	AUTHOR = {Ochs, Peter and Chen, Yunjin and Brox, Thomas and Pock,
	Thomas},
	TITLE = {i{P}iano: inertial proximal algorithm for nonconvex
	optimization},
	JOURNAL = {SIAM J. Imaging Sci.},
	FJOURNAL = {SIAM Journal on Imaging Sciences},
	VOLUME = {7},
	YEAR = {2014},
	NUMBER = {2},
	PAGES = {1388--1419},
	ISSN = {1936-4954},
	MRCLASS = {94A08},
	MRNUMBER = {3218822},
	DOI = {10.1137/130942954},
}

@article {Polyak1964,
	AUTHOR = {Poljak, B. T.},
	TITLE = {Some methods of speeding up the convergence of iterative
	methods},
	JOURNAL = {\v{Z}. Vy\v{c}isl. Mat i Mat. Fiz.},
	FJOURNAL = {\v{Z}urnal Vy\v{c}islitel\cprime no\u{\i} Matematiki i
	Matemati\v{c}esko\u{\i} Fiziki},
	VOLUME = {4},
	YEAR = {1964},
	PAGES = {791--803},
	ISSN = {0044-4669},
	MRCLASS = {65.10},
	MRNUMBER = {169403},
	MRREVIEWER = {Walter\ Gautschi},
}

@article {Sahu2011,
	AUTHOR = {Sahu, D. R.},
	TITLE = {Applications of the {S}-iteration process to constrained
	minimization problems and split feasibility problems},
	JOURNAL = {Fixed Point Theory},
	FJOURNAL = {Fixed Point Theory. An International Journal on Fixed Point
	Theory, Computation and Applications},
	VOLUME = {12},
	YEAR = {2011},
	NUMBER = {1},
	PAGES = {187--204},
	ISSN = {1583-5022,2066-9208},
	MRCLASS = {47J25 (47H10 47J20)},
	MRNUMBER = {2797080},
}

@article {Takahashi2008,
	AUTHOR = {Takahashi, Wataru and Takeuchi, Yukio and Kubota, Rieko},
	TITLE = {Strong convergence theorems by hybrid methods for families of
	nonexpansive mappings in {H}ilbert spaces},
	JOURNAL = {J. Math. Anal. Appl.},
	FJOURNAL = {Journal of Mathematical Analysis and Applications},
	VOLUME = {341},
	YEAR = {2008},
	NUMBER = {1},
	PAGES = {276--286},
	ISSN = {0022-247X,1096-0813},
	MRCLASS = {47H10 (47H09 47J25)},
	MRNUMBER = {2394082},
	MRREVIEWER = {Chika\ Moore},
	DOI = {10.1016/j.jmaa.2007.09.062},
}

@article {Jost1995,
	AUTHOR = {Jost, J\"{u}rgen},
	TITLE = {Convex functionals and generalized harmonic maps into spaces
	of nonpositive curvature},
	JOURNAL = {Comment. Math. Helv.},
	FJOURNAL = {Commentarii Mathematici Helvetici},
	VOLUME = {70},
	YEAR = {1995},
	NUMBER = {4},
	PAGES = {659--673},
	ISSN = {0010-2571,1420-8946},
	MRCLASS = {58E20 (53C23)},
	MRNUMBER = {1360608},
	MRREVIEWER = {Kang\ Zuo},
	DOI = {10.1007/BF02566027},
}

@article {Douglas1956,
	AUTHOR = {Douglas, Jim and Rachford, H. H.},
	TITLE = {On the numerical solution of heat conduction problems in two
	and three space variables},
	JOURNAL = {Trans. Amer. Math. Soc.},
	FJOURNAL = {Transactions of the American Mathematical Society},
	VOLUME = {82},
	YEAR = {1956},
	PAGES = {421--439},
	ISSN = {0002-9947,1088-6850},
	MRCLASS = {65.3X},
	MRNUMBER = {84194},
	MRREVIEWER = {Charles\ Saltzer},
	DOI = {10.2307/1993056},
}

@article {Passty1979,
	AUTHOR = {Passty, Gregory B.},
	TITLE = {Ergodic convergence to a zero of the sum of monotone operators
	in {H}ilbert space},
	JOURNAL = {J. Math. Anal. Appl.},
	FJOURNAL = {Journal of Mathematical Analysis and Applications},
	VOLUME = {72},
	YEAR = {1979},
	NUMBER = {2},
	PAGES = {383--390},
	ISSN = {0022-247X},
	MRCLASS = {47H05 (49A29)},
	MRNUMBER = {559375},
	MRREVIEWER = {R.\ A.\ Plastock},
	DOI = {10.1016/0022-247X(79)90234-8},
}

@article {DaCruz,
	AUTHOR = {Da Cruz Neto, J. X. and Ferreira, O. P. and P\'{e}rez, L. R.
	Lucambio and N\'{e}meth, S. Z.},
	TITLE = {Convex- and monotone-transformable mathematical programming
	problems and a proximal-like point method},
	JOURNAL = {J. Global Optim.},
	FJOURNAL = {Journal of Global Optimization. An International Journal
	Dealing with Theoretical and Computational Aspects of Seeking
	Global Optima and Their Applications in Science, Management
	and Engineering},
	VOLUME = {35},
	YEAR = {2006},
	NUMBER = {1},
	PAGES = {53--69},
	ISSN = {0925-5001,1573-2916},
	MRCLASS = {90C30 (90C52)},
	MRNUMBER = {2232276},
	MRREVIEWER = {Claudia\ A.\ Sagastiz\'{a}bal},
	DOI = {10.1007/s10898-005-6741-9},
}

@article {Rosenbrock,
	AUTHOR = {Rosenbrock, H. H.},
	TITLE = {An automatic method for finding the greatest or least value of
	a function},
	JOURNAL = {Comput. J.},
	FJOURNAL = {The Computer Journal},
	VOLUME = {3},
	YEAR = {1960/61},
	PAGES = {175--184},
	ISSN = {0010-4620},
	MRCLASS = {65.10},
	MRNUMBER = {136042},
	MRREVIEWER = {B.\ A.\ Chartres},
	DOI = {10.1093/comjnl/3.3.175},
}

@article{Dixit2020,
	title={Application of a new accelerated algorithm to regression problems},
	author={Dixit, Avinash and Sahu, Daya Ram and Singh, Amit Kumar and Som, Tanmoy},
	journal={Soft Computing},
	volume={24},
	pages={1539--1552},
	year={2020},
	publisher={Springer},
	doi={10.1007/s00500-019-03984-7}
}

@article{Sahusoft2020,
	title={Applications of accelerated computational methods for quasi-nonexpansive operators to optimization problems},
	author={Sahu, Daya Ram},
	journal={Soft Computing},
	volume={24},
	number={23},
	pages={17887--17911},
	year={2020},
	publisher={Springer},
	doi={10.1007/s00500-020-05038-9}
}

@article {Mordukhovich2012,
	AUTHOR = {Mordukhovich, Boris S. and Nam, Nguyen Mau and Salinas, Jr.,
	Juan},
	TITLE = {Solving a generalized {H}eron problem by means of convex
	analysis},
	JOURNAL = {Amer. Math. Monthly},
	FJOURNAL = {American Mathematical Monthly},
	VOLUME = {119},
	YEAR = {2012},
	NUMBER = {2},
	PAGES = {87--99},
	ISSN = {0002-9890,1930-0972},
	MRCLASS = {49J52 (90C25)},
	MRNUMBER = {2892422},
	DOI = {10.4169/amer.math.monthly.119.02.087},
}

@article {Dixit2022,
	AUTHOR = {Dixit, Avinash and Sahu, D. R. and Gautam, Pankaj and Som, T.},
	TITLE = {Convergence analysis of two-step inertial {D}ouglas-{R}achford
	algorithm and application},
	JOURNAL = {J. Appl. Math. Comput.},
	FJOURNAL = {Journal of Applied Mathematics and Computing},
	VOLUME = {68},
	YEAR = {2022},
	NUMBER = {2},
	PAGES = {953--977},
	ISSN = {1598-5865,1865-2085},
	MRCLASS = {47J22 (47J25 49J40 49J53)},
	MRNUMBER = {4396104},
	MRREVIEWER = {G.\ N.\ Ogwo},
	DOI = {10.1007/s12190-021-01554-5},
}

@article {Agarwal2007,
	AUTHOR = {Agarwal, R. P. and O'Regan, Donal and Sahu, D. R.},
	TITLE = {Iterative construction of fixed points of nearly
	asymptotically nonexpansive mappings},
	JOURNAL = {J. Nonlinear Convex Anal.},
	FJOURNAL = {Journal of Nonlinear and Convex Analysis. An International
	Journal},
	VOLUME = {8},
	YEAR = {2007},
	NUMBER = {1},
	PAGES = {61--79},
	ISSN = {1345-4773,1880-5221},
	MRCLASS = {47H10},
	MRNUMBER = {2314666},
	MRREVIEWER = {Jes\'us\ Garc\'ia-Falset},
}

@article {ShikherCNSNS2024,
	AUTHOR = {Sahu, D. R. and Pitea, Ariana and Sharma, Shikher and Singh,
	Amit Kumar},
	TITLE = {Applications of a variable anchoring iterative method to
	equation and inclusion problems on {H}adamard manifolds},
	JOURNAL = {Commun. Nonlinear Sci. Numer. Simul.},
	FJOURNAL = {Communications in Nonlinear Science and Numerical Simulation},
	VOLUME = {138},
	YEAR = {2024},
	PAGES = {Paper No. 108192},
	ISSN = {1007-5704,1878-7274},
	MRCLASS = {47J05 (47H09 49J40 58C30)},
	MRNUMBER = {4773522},
	MRREVIEWER = {Bhavana\ Deshpande},
	DOI = {10.1016/j.cnsns.2024.108192},
}

@article{ShikherOpt2024,
	author = {D.R. Sahu and Shikher Sharma and J. C. Yao and Xiaopeng Zhao},
	title = {Accelerated iterative splitting methods on {H}adamard manifolds},
	journal = {Optimization},
	volume = {0},
	number = {0},
	pages = {1--38},
	year = {2024},
	publisher = {Taylor \& Francis},
	doi = {10.1080/02331934.2024.2399807}
	}

@article {BergmannJOTA2024,
	AUTHOR = {Bergmann, Ronny and Ferreira, Orizon P. and Santos,
	Elianderson M. and Souza, Joao Carlos O.},
	TITLE = {The difference of convex algorithm on {H}adamard manifolds},
	JOURNAL = {J. Optim. Theory Appl.},
	FJOURNAL = {Journal of Optimization Theory and Applications},
	VOLUME = {201},
	YEAR = {2024},
	NUMBER = {1},
	PAGES = {221--251},
	ISSN = {0022-3239,1573-2878},
	MRCLASS = {90C30 (49N15 53B21 90C26)},
	MRNUMBER = {4734031},
	MRREVIEWER = {Junfeng\ Chen},
	DOI = {10.1007/s10957-024-02392-8},
}

@article {Hieu2018COA,
	AUTHOR = {Thao, Nguyen Hieu},
	TITLE = {A convergent relaxation of the {D}ouglas-{R}achford algorithm},
	JOURNAL = {Comput. Optim. Appl.},
	FJOURNAL = {Computational Optimization and Applications. An International
	Journal},
	VOLUME = {70},
	YEAR = {2018},
	NUMBER = {3},
	PAGES = {841--863},
	ISSN = {0926-6003,1573-2894},
	MRCLASS = {47J25 (49K40 49M05 49M27 65K05 65K10 90C26)},
	MRNUMBER = {3808698},
	MRREVIEWER = {Scott\ B.\ Lindstrom},
	DOI = {10.1007/s10589-018-9989-y},
}

@article {Samir2023,
	AUTHOR = {Adly, Samir and Attouch, Hedy and Le, Manh Hung},
	TITLE = {First order inertial optimization algorithms with threshold
	effects associated with dry friction},
	JOURNAL = {Comput. Optim. Appl.},
	FJOURNAL = {Computational Optimization and Applications. An International
	Journal},
	VOLUME = {86},
	YEAR = {2023},
	NUMBER = {3},
	PAGES = {801--843},
	ISSN = {0926-6003,1573-2894},
	MRCLASS = {34A60 (34D05 34G25 70F40 90C25 90C48)},
	MRNUMBER = {4666867},
	MRREVIEWER = {Shiqian\ Ma},
	DOI = {10.1007/s10589-023-00509-9},
}

@article {Attouch2019,
	AUTHOR = {Attouch, Hedy and Cabot, Alexandre},
	TITLE = {Convergence of a relaxed inertial forward-backward algorithm
	for structured monotone inclusions},
	JOURNAL = {Appl. Math. Optim.},
	FJOURNAL = {Applied Mathematics and Optimization},
	VOLUME = {80},
	YEAR = {2019},
	NUMBER = {3},
	PAGES = {547--598},
	ISSN = {0095-4616,1432-0606},
	MRCLASS = {49M37 (65K05 65K10 90C25 90C48)},
	MRNUMBER = {4026592},
	MRREVIEWER = {B\brevgrv ang\ C\^ong\ V\~u},
	DOI = {10.1007/s00245-019-09584-z},
}

@article {Dong2015comments,
	AUTHOR = {Dong, Yunda},
	TITLE = {Comments on ``{T}he proximal point algorithm revisited''
	[{MR}3193802]},
	JOURNAL = {J. Optim. Theory Appl.},
	FJOURNAL = {Journal of Optimization Theory and Applications},
	VOLUME = {166},
	YEAR = {2015},
	NUMBER = {1},
	PAGES = {343--349},
	ISSN = {0022-3239,1573-2878},
	MRCLASS = {47H05 (47J25 90C25 90C48)},
	MRNUMBER = {3366118},
	DOI = {10.1007/s10957-014-0685-5},
}

@article {Barshad2023,
	AUTHOR = {Barshad, Kay and Gibali, Aviv and Reich, Simeon},
	TITLE = {Unrestricted {D}ouglas-{R}achford algorithms for solving
	convex feasibility problems in {H}ilbert space},
	JOURNAL = {Optim. Methods Softw.},
	FJOURNAL = {Optimization Methods \& Software},
	VOLUME = {38},
	YEAR = {2023},
	NUMBER = {4},
	PAGES = {655--667},
	ISSN = {1055-6788,1029-4937},
	MRCLASS = {47J25 (47H09 47H10 90C25 90C48)},
	MRNUMBER = {4616448},
	MRREVIEWER = {Matthew\ K.\ Tam},
	DOI = {10.1080/10556788.2022.2157003},
}

@article {ReichFT2023,
	AUTHOR = {Reich, Simeon and Tuyen, Truong Minh},
	TITLE = {The generalized {F}ermat-{T}orricelli problem in {H}ilbert
	spaces},
	JOURNAL = {J. Optim. Theory Appl.},
	FJOURNAL = {Journal of Optimization Theory and Applications},
	VOLUME = {196},
	YEAR = {2023},
	NUMBER = {1},
	PAGES = {78--97},
	ISSN = {0022-3239,1573-2878},
	MRCLASS = {47H05 (47H09 47N10 49J53 90C25)},
	MRNUMBER = {4530150},
	MRREVIEWER = {Oluwatosin\ Temitope\ Mewomo},
	DOI = {10.1007/s10957-022-02113-z},
}

\end{document}